\documentclass[a4paper,11pt]{amsart}
\usepackage[utf8]{inputenc}
\usepackage{amsmath,amsthm,amssymb,mathtools}
\usepackage[shortlabels]{enumitem}
\usepackage{mathrsfs}
\usepackage{xcolor}
\usepackage[colorlinks=true,linkcolor=blue!50!black,citecolor=blue!50!black,urlcolor=blue!50!black]{hyperref}
\usepackage[capitalize,nameinlink]{cleveref}
\newtheorem{theorem}{Theorem}[section]
\newtheorem{proposition}[theorem]{Proposition}
\newtheorem{lemma}[theorem]{Lemma}

\theoremstyle{definition}
\newtheorem{definition}[theorem]{Definition}
\newtheorem{example}[theorem]{Example}
\newtheorem{setting}[theorem]{Setting}
\theoremstyle{remark}
\newtheorem{remark}[theorem]{Remark}
\crefname{setting}{Setting}{Settings}

\newcommand{\C}{\mathbb{C}}
\newcommand{\Z}{\mathbb{Z}}
\newcommand{\N}{\mathbb{N}}
\newcommand{\PP}{\mathbb{P}}
\newcommand{\cO}{\mathcal{O}}
\newcommand{\cI}{\mathcal{I}}
\newcommand{\cF}{\mathcal{F}}
\newcommand{\cS}{\mathscr{S}}
\newcommand{\pr}{\mathrm{pr}}
\newcommand{\Op}{\mathrm{Op}}
\newcommand{\merto}{\dashrightarrow}
\newcommand{\Khat}{\widehat{K}}
\newcommand{\GO}{\Gamma_{\cO}}
\newcommand{\GP}{\Gamma^{P}_{\cO}}
\DeclareMathOperator{\Pic}{Pic}
\DeclareMathOperator{\Sym}{Sym}
\DeclareMathOperator{\dist}{dist}
\DeclareMathOperator{\id}{id}
\DeclareMathOperator{\supp}{supp}

\begin{document}

\title[Meromorphic approximation of holomorphic maps]{Meromorphic approximation of\\ holomorphic maps into\\ algebraically elliptic projective varieties}

\author[Y. Kusakabe]{Yuta Kusakabe}
\address{Faculty of Mathematics, Kyushu University, Fukuoka 819-0395, Japan}
\email{kusakabe@math.kyushu-u.ac.jp}
\author[T. Miyazaki]{Taichi Miyazaki}
\address{Joint Graduate School of Mathematics for Innovation, Kyushu University, Fukuoka 819-0395, Japan}
\email{miyazaki.taichi.478@s.kyushu-u.ac.jp}
\subjclass[2020]{Primary 32Q56, 32E30; Secondary 32H04, 14E05, 14M20}
\keywords{Oka principle, meromorphic map, meromorphic convexity, algebraic ellipticity, spray, rational spray, birational invariance}

\begin{abstract}
We show that holomorphic maps from a neighborhood of a compact set in a Stein space to an algebraically elliptic smooth projective variety can be approximated by meromorphic maps, provided that the map extends continuously to the whole space and the compact set is convex with respect to a submonoid of the Picard group. The proof uses algebraic sprays and their polynomial lifts to affine cones, which we call conic lifts. Rational sprays, which generalize algebraic sprays, extend the result to maps avoiding the exceptional set of a rational spray. We also show that, for smooth projective varieties, the existence of an algebraic spray which is dominating at some point is a birational invariant.
\end{abstract}

\maketitle

\section{Introduction}\label{sec:intro}

\subsection{Background and notation}
The parametric Oka principle with approximation and interpolation (POPAI) deforms continuous families of continuous maps from Stein spaces to Oka manifolds into continuous families of holomorphic maps, with approximation on holomorphically convex compact sets and interpolation on closed analytic subvarieties and closed parameter sets \cite{Gr89}, \cite[Theorem 5.4.4]{F17}, \cite[Theorem 1.2]{F23}, \cite[Corollary 5.7]{Ku21}. We seek an analog with approximation by meromorphic maps on compact sets that need not be holomorphically convex.

Approximation of holomorphic functions by meromorphic ones is classical, going back to Runge's theorem in one variable (see \cite[Theorem 10]{Ro67} on compact Riemann surfaces) and to the Oka--Weil theorem on rationally convex sets in $\C^n$. On Stein manifolds, Hirschowitz \cite[Th\'eor\`eme 2]{Hi71} treated compact sets which are convex with respect to hypersurfaces, and Abe \cite[Theorem 11]{Ab05} proved a meromorphic version of the Oka--Weil theorem on reduced Stein spaces. Abe \cite{Ab09} unified both results by introducing convexity with respect to a subsemigroup $G$ of the Picard group of a Stein space $X$ (\cref{def:abe-hull}). On a $G$-convex compact set, every holomorphic function can be approximated by quotients of global sections of a line bundle in $G$ \cite[Theorem 5.1]{Ab09}. Meromorphic convexity has also been studied recently \cite{BS26,BS25}. For maps into projective space, Zeron \cite[Theorem 2]{Ze06} showed that a holomorphic map from a neighborhood of a rationally convex compact set $K\subset\C^n$ with $\check H^2(K;\Z)=0$ to $\PP^m$ can be approximated uniformly on $K$ by maps $[P]$, where $P$ is a polynomial map without zeros on $K$ (see also \cite{GZ06}). Our results extend these approximation theorems, with parameters and interpolation, to maps into algebraically elliptic smooth projective varieties (\cref{def:alg-spray}) and, away from the exceptional set of a rational spray, into rationally elliptic ones (\cref{def:intro-spray}). Holomorphic maps from Runge domains in affine varieties to quasi-projective manifolds can be approximated by Nash algebraic maps \cite[Theorem 1.1]{DLS94}, see also \cite[Theorem 1.1]{Le95}. For compact targets, approximation by regular maps from affine varieties fails in general \cite[Theorem 2(b)]{LT19}, so meromorphic maps are natural approximants.

In this paper, all complex spaces are assumed to be reduced and second countable, and all algebraic varieties are assumed to be irreducible. For a subset $K\subset X$ we write $\Op K$ for an unspecified open neighborhood of $K$. For a compact set $K$ we write $\cO(K,Y)$ for the set of maps to $Y$ which are holomorphic on an open neighborhood of $K$. For a complex space $X$, $\cO(X,Y)$ denotes the set of holomorphic maps $X\to Y$. We write $\dist$ for the Fubini--Study distance on $\PP^N$ and $\N=\{1,2,\dots\}$.

\subsection{Main theorems}

\begin{theorem}\label[theorem]{thm:main1}
Let $X$ be a Stein space, $G\subset\Pic(X)$ a submonoid, and $Y\subset\PP^N$ an algebraically elliptic smooth projective variety \textup{(}\cref{def:alg-spray}\textup{)}. Let $K\subset X$ be a compact $G$-convex set \textup{(}\cref{def:abe-hull}\textup{)}, $X_0\subset X$ a closed analytic subvariety, and $(P,P_0)$ a finite CW pair. Let $f_0\colon P\times X\to Y$ be a continuous map satisfying the following conditions:
\begin{itemize}
\item $f_0(p,\cdot)\in\cO(X,Y)$ for all $p\in P_0$, and $f_0(p,\cdot)|_{X_0}\in\cO(X_0,Y)$ for all $p\in P$,
\item $f_0|_{P\times\Op K}$ is a continuous family of holomorphic maps.
\end{itemize}
Then there exist $L\in G$ and $\sigma\in\GO(X,L)$ with $\{\sigma=0\}\cap K=\emptyset$, vanishing identically on no irreducible component of $X$ or of $X_0$, with the following property. For each $\epsilon>0$ there are a continuous family of meromorphic maps $F=[\widetilde F]\colon P\times X\merto Y$ \textup{(}\cref{def:families}\textup{)} such that $\widetilde F$ has no zeros on $P\times(X\setminus\{\sigma=0\})$, and a homotopy of continuous maps $f_t\colon P\times(X\setminus\{\sigma=0\})\to Y$ \textup{(}$t\in[0,1]$\textup{)} from $f_0|_{P\times(X\setminus\{\sigma=0\})}$ to $f_1:=F|_{P\times(X\setminus\{\sigma=0\})}$, such that for all $t\in[0,1]$:
\begin{enumerate}[(i)]
\item $f_t(p,\cdot)|_K\in\cO(K,Y)$ for all $p\in P$, and $\sup_{P\times K}\dist(f_t,f_0)<\epsilon$,
\item $f_t=f_0$ on $P_0\times(X\setminus\{\sigma=0\})$ and on $P\times(X_0\setminus\{\sigma=0\})$.
\end{enumerate}
In particular $F(p,\cdot)=f_0(p,\cdot)$ as meromorphic maps for $p\in P_0$, and $F(p,\cdot)=f_0(p,\cdot)$ on $X_0\setminus\{\sigma=0\}$, which is dense in $X_0$, for all $p\in P$. If moreover $X$ is a Stein manifold, $X_0$ a closed complex submanifold, $f_0|_{P\times\Op X_0}$ is a continuous family of holomorphic maps, and $\ell\ge1$, then $F$ can be chosen such that $F(p,\cdot)$ and $f_0(p,\cdot)$ agree to order $\ell$ along $X_0\setminus\{\sigma=0\}$.
\end{theorem}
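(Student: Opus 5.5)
We plan to reduce the meromorphic problem on $X$ to a holomorphic Oka problem on a Stein space obtained by removing a divisor, and then to pass from holomorphic maps into $Y$ to maps into the affine cone over $Y$ via conic lifts of algebraic sprays.

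\emph{Step 1: choose $L$ and $\sigma$.} Since $K$ is $G$-convex, Abe's theory (\cref{def:abe-hull} and \cite[Theorem 5.1]{Ab09}) gives $L\in G$ and $\sigma\in\GO(X,L)$ with $\{\sigma=0\}\cap K=\emptyset$. Because $G$ is a monoid, we may multiply sections and pass to powers. This lets us arrange that $\sigma$ vanishes identically on no irreducible component of $X$ or of $X_0$, and that $K$ is holomorphically convex in the Stein space $X':=X\setminus\{\sigma=0\}$. The complement of a hypersurface in a Stein space is Stein, and $\mathcal O(X')$-convexity of $K$ follows from $G$-convexity, since $\tau/\sigma^k$ with $\tau\in\GO(X,L^k)$ are holomorphic on $X'$. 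We also arrange that $L|_{X'}$ is trivialized by $\sigma$. Then $f_0|_{P\times X'}$ satisfies the hypotheses of POPAI on $X'$ with respect to $K$ and $X_0\cap X'$. Here we use that $Y$ is Oka, since algebraically elliptic manifolds are elliptic. POPAI yields holomorphic families on $X'$, but these are in general not meromorphic on $X$.

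\emph{Step 2: lift to the cone.} Let $\pi\colon\C^{N+1}\setminus\{0\}\to\PP^N$ and let $C\subset\C^{N+1}$ be the affine cone over $Y$. Using an algebraic dominating spray on $Y$ and its conic lift, which is a polynomial spray on $C\setminus\{0\}$ covering it, we will show that $C\setminus\{0\}$ is algebraically elliptic in a fibrewise sense over $Y$. On $X'$ we lift $f_0$ homotopically to a map $\widetilde f_0$ into $C\setminus\{0\}$. The obstruction is the pullback line bundle $f_0^*\cO(1)$. We replace $L$ by a suitable element of $G$, or enlarge $\sigma$, so that $f_0^*\cO(-1)$ becomes trivial on $X'$. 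The key point is that approximation on $K$ only requires triviality near $K$, and $G$-convexity allows us to absorb a divisor. Then we apply the algebraic Oka principle with sprays: the conic lift composed with iterated sprays gives maps of the form $\widetilde F=\Phi(\widetilde f, s_1,\dots,s_m)$, polynomial in the fibre variables. Approximating the holomorphic coefficients $s_j$ on $K$ by quotients $\tau/\sigma^k$ (Abe's theorem) and clearing denominators by a power of $\sigma$ (homogeneity of the conic lift) gives $\widetilde F$ as a tuple of sections of $L^k$. Its class $[\widetilde F]$ is then meromorphic on $X$ and has no zeros on $X'$, since its image lies in $C\setminus\{0\}$.

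\emph{Step 3: parameters, interpolation, jets.} Parameters are handled by the standard induction over cells of $(P,P_0)$. Interpolation on $X_0$ and jet interpolation are handled by choosing the approximating sections in the ideal of $X_0$ (Cartan B on $X$ twisted by $L^k$) and multiplying spray directions by functions vanishing to order $\ell$ on $X_0$. The homotopy $f_t$ comes from the straight-line homotopy in the spray fibres, which stays close to $f_0$ on $K$.

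The main obstacle is Step 2: producing a global holomorphic map $X'\to Y$ close to $f_0$ on $K$ that is of the form "algebraic expression in sections of $L^k$ divided by $\sigma$", rather than merely holomorphic on $X'$. I would first try to make the composed conic spray depend polynomially on finitely many scalar functions. Then a Runge–Abe approximation of those functions, followed by a small correction via the implicit function theorem for dominating sprays to restore exact interpolation, should suffice.
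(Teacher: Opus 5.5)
Your Step 2 has a genuine gap, and it is topological. You want $\widetilde F$ to be a tuple of sections of $L^k$ with $L\in G$, after enlarging $\sigma$ so that $f_0^*\cO_Y(-1)$ becomes trivial on $X'=X\setminus\{\sigma=0\}$. This cannot be arranged in general. If $\widetilde F$ has no zeros on $X'$, then $F^*\cO_Y(1)|_{X'}\cong L(F)|_{X'}$. Since the theorem requires $F\simeq f_0$ on $X'$, $L(F)|_{X'}$ must be topologically isomorphic to $f_0^*\cO_Y(1)|_{X'}$. If $L(F)$ is a power of $L$, a power of $\sigma$ trivializes it on $X'$, so $f_0^*\cO_Y(1)|_{X'}$ would have to be trivial. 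That class can be nonzero already on $K$, and $\sigma$ must avoid $K$. For example, take $X=\{x^2+y^2+z^2=1\}\subset\C^3$ and $K=X\cap\mathbb{R}^3$, which is polynomially convex and hence $G$-convex for every $G$. Take $Y=\PP^1$ and $f_0=[x+iy:1-z]$, which is stereographic projection on $K$ and has degree one. You rightly note that a base defined only on $X'$ (such as a lift of the POPAI output on $X'$) gives expressions $\Phi(\tilde f,\dots)$ that are not meromorphic across $\{\sigma=0\}$. But your remedy, an algebraic expression in sections of $L^k$ divided by powers of $\sigma$, again forces $L(F)$ to be a power of $L$ and meets the same obstruction.

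The missing idea is to use the hypothesis that $f_0$ is continuous on all of $P\times X$, which your Step 1 discards by restricting to $X'$. Since $Y$ is Oka, POPAI on the Stein space $X$ itself gives $g\colon P\times X\to Y$, holomorphic on $X$ and homotopic to $f_0$ relative to $(P_0\times X)\cup(P\times X_0)$. Then $\tilde g=g^*w$ is a zero-free tuple of sections of the holomorphic line bundle $g^*\cO_Y(1)$. This bundle is generally not in $G$ but has the right topological class, and $\tilde g$ serves as the base point of the spray. The paper proceeds in four moves.
\begin{enumerate}[(1)]
\item It makes the homotopy between $g$ and $f_0$ holomorphic near $\Khat_{X\setminus\{\sigma=0\}}$, by a second POPAI with parameter space $[0,1]\times P$ relative to the ends.
\item It lifts this homotopy through the iterated conic lift to $h_t$ with $f_0=s^{(l)}(h_1\otimes\tilde g^{\otimes m^{(l)}})$.
\item It approximates $h_1$ by $\eta/\sigma^n$, where $\eta$ is a section of the twisted bundle $E^{(l)}(L(g))\otimes L^n$ (\cref{prop:runge-interp}).
\item It sets $\widetilde F=(R^{(l)})^h(\tilde g,\eta,\sigma^n)$, a tuple of sections of $L(g)\otimes L^k$.
\end{enumerate}
The homotopy $f_t$ on all of $P\times X'$ is then obtained by gluing two pieces: the global homotopy $f_0\simeq g$, cut off away from $\Khat_{X\setminus\{\sigma=0\}}$, and the straight-line homotopy in the spray fibres near it. Your straight-line homotopy alone exists only where a lift exists.

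A smaller point concerns Step 1: you cannot in general make $K$ itself $\cO(X')$-convex with a single $\sigma$. For $X=\C$, $G=\{[\cO_{\C}]\}$ and $K$ a closed disc with infinitely many disjoint open holes, $\sigma$ would need a zero in every hole. Your argument via $\tau/\sigma^k$ does not establish it either. What \cref{lem:hull} gives is $\Khat_{X\setminus\{\sigma=0\}}\subset\Op K$, and this suffices because $f_0$ is holomorphic on $\Op K$.
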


The proof combines the method of proof of \cref{thm:main2} with the Oka principle (\cref{sec:proof-main1}).

Let $X$, $G$, $Y$, $K$, $X_0$ and $P_0\subset P$ be as above, $\ell\ge1$ an integer, and $f_1\colon P\times\Op K\to Y$ a continuous family of holomorphic maps. Consider the following conditions.
\begin{enumerate}
\item[(A)] For every $\epsilon>0$ there exists a continuous family of meromorphic maps $F=[\widetilde F]\colon P\times X\merto Y$ (\cref{def:families}) such that
\begin{itemize}
\item $\widetilde F$ has no zeros on $P\times\Op K$, so $F(p,\cdot)|_K\in\cO(K,Y)$ for all $p\in P$,
\item $\sup_{P\times K}\dist(F,f_1)<\epsilon$,
\item $F(p,\cdot)$ and $f_1(p,\cdot)$ agree to order $\ell$ along $X_0\cap\Op K$ for all $p$, and $F(p,\cdot)=f_1(p,\cdot)$ on $\Op K$ for $p\in P_0$.
\end{itemize}
\item[(H)] There exist a continuous family of meromorphic maps $f_0=[\tilde f_0]\colon P\times X\merto Y$ and a continuous family $f_t\colon P\times\Op K\to Y$ \textup{(}$t\in[0,1]$\textup{)} of holomorphic maps from $f_0|_{P\times\Op K}$ to $f_1$ such that for all $(t,p)\in[0,1]\times P$:
\begin{itemize}
\item $\tilde f_0$ has no zeros on $P\times\Op K$,
\item $f_t(p,\cdot)$ and $f_0(p,\cdot)$ agree to order $\ell$ along $X_0\cap\Op K$,
\item $f_t(p,\cdot)=f_1(p,\cdot)$ if $p\in P_0$.
\end{itemize}
\end{enumerate}

\begin{theorem}\label[theorem]{thm:main2}
Let $X$, $G$, $Y$, $K$, $X_0$ and $P_0\subset P$ satisfy the hypotheses of \cref{thm:main1}, let $\ell\ge1$ be an integer, and let $f_1\colon P\times\Op K\to Y$ be a continuous family of holomorphic maps. Then:
\begin{enumerate}[(1)]
\item Conditions \textup{(A)} and \textup{(H)} are equivalent.
\item Assume \textup{(H)}. Then there exist $L\in G$ and $\sigma\in\GO(X,L)$ with $\{\sigma=0\}\cap K=\emptyset$ and $\widehat K_{X\setminus\{\sigma=0\}}\subset\Op K$, vanishing identically on no irreducible component of $X$ or of $X_0$, such that for every $\epsilon>0$ there exist an integer $k\ge0$, a continuous family of meromorphic maps $F=[\widetilde F]\colon P\times X\merto Y$ and a continuous family of holomorphic maps $\hat f_t\colon P\times\Op K\to Y$ \textup{(}$t\in[0,1]$\textup{)} such that for all $t\in[0,1]$ and $p\in P$:
\begin{enumerate}[(i)]
\item $\widetilde F$ has no zeros on $P\times\Op K$, $\hat f_0=F|_{P\times\Op K}$ and $\hat f_1=f_1$,
\item $\hat f_t(p,\cdot)=f_1(p,\cdot)$ if $p\in P_0$,
\item $\hat f_t(p,\cdot)$ and $f_1(p,\cdot)$ agree to order $\ell$ along $X_0\cap\Op K$,
\item $\sup_{P\times K}\dist(\hat f_t,f_1)<\epsilon$,
\item $L(F)=L(f_0)\otimes\pr_X^*L^k$ \textup{(}\cref{def:families}\textup{)},
\item if $p\in P_0$, then $\widetilde F(p,\cdot)=\sigma^k\tilde f_0(p,\cdot)$. In particular $F(p,\cdot)=f_0(p,\cdot)$,
\item the components of $\widetilde F(p,\cdot)-\sigma^k\tilde f_0(p,\cdot)$ belong to $\Gamma\bigl(X,\cI_{X_0}^{\ell}\cdot\cO(L(F)_p)\bigr)$, where $\cI_{X_0}$ is the ideal sheaf of $X_0$.
\end{enumerate}
\end{enumerate}
\end{theorem}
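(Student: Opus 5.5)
The plan is to prove (2) first and deduce (1) from it. For (A)$\Rightarrow$(H), take $\epsilon$ so small that $F$ and $f_1$ lie in a common tubular-type neighbourhood on $P\times K$; shrinking $\Op K$ slightly, the same holds on $P\times\Op K$. Then set $f_0:=F$ and join $F|_{P\times\Op K}$ to $f_1$ by the fibrewise geodesic (or spray) homotopy in $Y$. This homotopy is constant for $p\in P_0$, where $F=f_1$. It also preserves the $\ell$-jet along $X_0\cap\Op K$, because $F$ and $f_1$ have the same $\ell$-jets there and the homotopy is built from a holomorphic retraction applied to the pair $(F,f_1)$. Conversely, (2)(i)--(iv) give (A) directly, with $\hat f_0=F$.

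For (2), I would first choose $\sigma$. By $G$-convexity of $K$ (\cref{def:abe-hull}), for a fixed neighbourhood $U=\Op K$ there are finitely many $L_j\in G$ and sections $s_j$ with $|s_j|$ small on $\partial$-points outside $U$ and nonvanishing on $K$. Taking a tensor product gives $L\in G$ and $\sigma\in\GO(X,L)$ with $\{\sigma=0\}\cap K=\emptyset$ and $\widehat K_{X\setminus\{\sigma=0\}}\subset U$. A generic choice (a small generic perturbation in the finite-dimensional span) ensures that $\sigma$ vanishes identically on no irreducible component of $X$ or $X_0$. The space $X\setminus\{\sigma=0\}$ is Stein, and $\sigma^{-k}$ times sections of $L(f_0)\otimes L^k$ give the algebra in which we approximate. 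This is the Abe approximation \cite[Theorem 5.1]{Ab09}, applied with values in the line bundle $L(f_0)$.

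The core step is to lift the homotopy $f_t$ through conic lifts of the algebraic spray. Since $Y$ is algebraically elliptic, it carries a dominating algebraic spray $s\colon E\to Y$. Its conic lift is a polynomial map $\tilde s\colon \widetilde E\to\C^{N+1}\setminus\{0\}$ over the affine cone, homogeneous in the cone variable. Following the standard Oka-theoretic scheme (as in the proof of the Oka principle for elliptic targets), I subdivide $[0,1]$ so that on each short interval $f_{t'}=s(f_t,\xi_t)$ for a continuous family of holomorphic sections $\xi_t$ of $f_t^*E$ over $P\times\Op K$. These sections vanish for $p\in P_0$ and vanish to order $\ell$ along $X_0$; the latter is obtained by multiplying by generators of $\cI_{X_0}^\ell$, using domination (Cartan's Theorem B on $\Op K$, with composed sprays). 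Starting at $\tilde f_0$, which is a global section tuple of $L(f_0)$ without zeros on $P\times\Op K$, I approximate each $\xi_t$ on $P\times K$ by global sections of $\tilde f_0^*\widetilde E\otimes\pr_X^*L^{k}$ divided by $\sigma^{k}$. Applying $\tilde s$ then yields a new tuple $\widetilde F$ of sections of $L(f_0)\otimes L^k$, where homogeneity absorbs the powers of $\sigma$. Iterating over the finitely many intervals produces $F$, with the estimate (iv) coming from approximation and (v) coming from the degree count.

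The interpolation conditions (vi) and (vii) are handled by approximating with sections lying in $\Gamma(X,\cI_{X_0}^\ell\cdot\ldots)$ and vanishing on $P_0$. For the latter I use the parametric Abe/Oka--Weil approximation, with interpolation on the parameter set via a standard partition-of-unity correction in $p$ of finite-dimensional approximants. The homotopy $\hat f_t$ comes from composing the approximate and exact spray steps together with a small homotopy between them.

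I expect the main obstacle to be this parametric approximation, with interpolation in both $X_0$ and $P_0$, by global sections of $\pr_X^*L^k$-twisted bundles. It has to be done uniformly with a single $\sigma$ chosen before $\epsilon$ while keeping $\widetilde F$ zero-free on $P\times\Op K$. My first attempt would be to approximate on the Stein space $X\setminus\{\sigma=0\}$, where $K$ is holomorphically convex, and then clear denominators by $\sigma^k$.
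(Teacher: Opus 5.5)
Your architecture matches the paper's. You choose $\sigma$ as in \cref{lem:hull}, lift the homotopy through a conic lift of a dominating algebraic spray over a subdivision of $[0,1]$, approximate the lifted fibre data by $\eta/\sigma^n$ with interpolation, substitute by homogeneity, and get (1) from (2) plus a spray homotopy. However, two steps do not go through as written.

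The first is the parametric approximation, which you rightly single out as the crux. A partition of unity in $p$ cannot combine approximants found at finitely many parameters $p_j$. They are sections of different bundles $L(f_0)_{p_j}^{-m}\otimes L^n$, because $L(f_0)$ is only a continuous family over $P\times X$. The paper first trivializes the bundle locally in $p$ over contractible compact sets $P_i\subset P$ (\cref{lem:loc-triv}, via Bungart's Oka--Grauert principle). It then runs the partition-of-unity and cutoff argument for the resulting constant bundles and glues (\cref{prop:runge-interp}). The approximation must also take place on $\Khat_{X\setminus\{\sigma=0\}}$, not on $K$. A single $\sigma$ does not make $K$ holomorphically convex in $X\setminus\{\sigma=0\}$; it only places this hull inside $\Op K$. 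Interpolation in $\cI_{X_0}^{\ell}$ comes from writing the section as $\sum_j a_jg_j$ with global generators $g_j$ of $\cI\cdot\cO(E)$ and approximating only the coefficients $a_j$. Your genericity step for $\sigma$ also needs two further ingredients. One is the stability of the hull condition under small perturbations of $\sigma$. The other is Baire's theorem in $\GO(X,L_j)$, since there may be countably many components.

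The second gap is in the lifting. For $t\in[t_j,t_{j+1}]$ your $\xi_t$ is a section of $f_{t_j}^*E$, which exists only near the hull. So ``approximate by global sections of $\tilde f_0^*\widetilde E\otimes\pr_X^*L^k$'' presupposes an isomorphism you never construct. Interleaving approximations would moreover force you to re-linearize about each meromorphic approximant.

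The paper first replaces $E$ by $\cO_Y(-m)^{\oplus r}$ and normalizes to a polynomial conic lift $R$ of degree one with $R_0(w)=w$ (\cref{prop:normalized}). Polynomiality there uses surjectivity of $H^0(\PP^N,\cO(n))\to H^0(Y,\cO_Y(n))$ for $n\gg0$. It then lifts the homotopy \emph{exactly}, before approximating anything (\cref{lem:lifting}). The tuple $\tilde f_{t_j}=R^{(j)}(\tilde f_0;h^{(j)})$ is a zero-free section of $L(f_0)$ near the hull. \Cref{prop:substitution}(iii) then gives $\xi_t=\kappa_{j+1}\otimes\tilde f_{t_j}^{\otimes m}$, with $\kappa_{j+1}$ a section of $(L(f_0)^{-m})^{\oplus r}$. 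Hence the whole lift $h_1$ is one section of $E^{(l)}(L(f_0))$. It is approximated once and substituted once as $(R^{(l)})^h(\tilde f_0,\eta,\sigma^n)$, and $\hat f_t$ is the straight segment from $\eta/\sigma^n$ to $h_1$ pushed through $s^{(l)}$.

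Degree one is exactly what gives (v); a degree-$d$ lift yields $L(f_0)^{d^l}$. The normalization $R_0=\id$ is what gives (vi) and (vii). The order-$\ell$ vanishing of $\xi_t$ is automatic from the inverse-function construction (\cref{lem:linearization}(c)), so no Theorem B argument is needed.

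Finally, in (A)$\Rightarrow$(H) only your spray option works, because a geodesic homotopy is not holomorphic in $x$. Concretely, linearize $F=s\circ\xi$ about $f_1$ on a Stein neighbourhood (\cref{lem:linearization}) and set $f_t:=s\circ((1-t)\xi)$.
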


For every $p\in P$, $\widetilde F(p,\cdot)$ has no zeros on $X\setminus\{\sigma=0\}$ wherever $\tilde f_0(p,\cdot)$ has none, and $F(p,\cdot)$ is holomorphic on $X\setminus\{\sigma=0\}$ wherever $f_0(p,\cdot)$ is (\cref{sec:proof-main2}). If $K$ is $\cO(X)$-convex, then $\sigma=1$. Condition \textup{(vii)} is a congruence of tuples and does not assert that $F(p,\cdot)$ is holomorphic at points of $X_0$ where $\tilde f_0(p,\cdot)$ or $\sigma$ vanishes.

\begin{remark}\label[remark]{rem:continuous-H}
For $\ell=1$, condition (H) is equivalent to the condition obtained by requiring $f_t$ only to be a homotopy of continuous maps. Indeed, since $Y$ is Oka, POPAI \cite[Theorem 1.2]{F23} on a Stein neighborhood of $K$, relative to the endpoints, $P_0$ and $X_0$, deforms such a homotopy into a homotopy of holomorphic maps, as in \cref{sec:proof-main1}.
\end{remark}

\begin{remark}\label[remark]{rem:parameters}
Except for the interpolation of jets in \cref{thm:main1}, \cref{thm:main1,thm:main2,thm:general} hold, with the same proofs, for compact Hausdorff spaces $P_0\subset P$ such that every point of $P$ has a contractible compact neighborhood. Indeed, POPAI holds for such parameter spaces \cite[Theorem 1.2]{F23}, see also \cite[Corollary 5.7]{Ku21}, and \cref{prop:runge-interp} is stated in this generality. The interpolation of jets in \cref{thm:main1} uses \cite[Theorem 1.4]{FP01}, which requires $P_0$ to be a strong deformation retract of a neighborhood of $P_0$ in $P$.
\end{remark}

\Cref{thm:main1} does not hold for analytically elliptic manifolds in general (\cref{rem:torus}).

When $X$ is an open Riemann surface, every compact set is $G$-convex for $G=\{[\cO_X]\}$, since for $x\notin K$ there is a holomorphic function on $X$ whose only zero is a simple zero at $x$, and meromorphic maps from $X$ to $Y$ are holomorphic. For $P$ a point, \cref{thm:main1} then concerns approximation by holomorphic maps on arbitrary compact sets. Since $Y$ is unirational (\cref{thm:ME}), hence rationally connected, this case is also covered by the concurrent work \cite[Theorem 1.5]{DGX26} (with a point as base, see also \cite[Corollary 1.8]{DGX26}). Oka-1 manifolds were introduced in \cite{AF25}, see also \cite[Theorem 4.1, Corollary 1.8]{FL25} for the algebraic Oka-1 property and \cite{BW25} for a related algebraic approximation theorem.

\subsection{Rational sprays and rational ellipticity}
We adapt the spray-lifting method of \cite[Theorem 3.1]{F06} (see also \cite[\S 6.15]{F17}), where holomorphic maps homotopic to algebraic maps are approximated by algebraic maps. Bochnak and Kucharz \cite{BK23} use dominating cascades to approximate holomorphic maps defined near compact subsets of affine varieties by rational maps which are regular near the compact set. A dominating cascade \cite[Definition 2.8]{BK23} is a rational spray on a trivial bundle which is regular near the zero section and has empty exceptional set, and \cite[Theorem 2.9]{BK23} is a prototype of \cref{thm:general} with $Z=\emptyset$. Rational sprays also allow the twisted bundles $\cO_Y(-m)^{\oplus r}$, which arise naturally for projective $Y$ (\cref{prop:normalized}), and a nonempty exceptional set. In \cite{FL26}, dominating holomorphic sprays on projective Oka manifolds are constructed on direct sums of copies of the dual of a sufficiently ample line bundle. Here rational sprays and conic lifts allow substitution of meromorphic data in both the base and fiber variables, yielding global meromorphic maps from Stein spaces.

\begin{definition}\label[definition]{def:intro-spray}
Let $Y$ be a smooth variety and $p\colon E\to Y$ an algebraic vector bundle with zero section $0_E\colon Y\to E$. For a rational map $f$ we denote by $I(f)$ its \emph{indeterminacy set}, the complement of the largest Zariski open set on which $f$ is regular. A \emph{rational spray} on $Y$ is a rational map $s\colon E\merto Y$ such that $0_E(Y)\not\subset I(s)$ and
\[
s\circ0_E=\id_Y\qquad\text{as rational maps}.
\]
The \emph{exceptional set} $Z(s)$ is the Zariski closed set \textup{(}\cref{rem:rational-basic}(1)\textup{)} of points $y\in Y$ such that $0_y\in I(s)$, or the differential of $s|_{E_y}$ at $0_y\in E_y$ is not surjective onto $T_yY$. The variety $Y$ is \emph{rationally elliptic} if it carries a rational spray $s$ with $Z(s)\ne Y$.
\end{definition}

Here rational ellipticity refers to rational sprays, not to the unrelated notion of a rationally elliptic space in rational homotopy theory.

\begin{definition}\label[definition]{def:alg-spray}
An \emph{algebraic spray} on a smooth variety $Y$ is a rational spray $s\colon E\to Y$ which is a morphism. Then $Z(s)=\{y\in Y:ds_{0_y}(E_y)\ne T_yY\}$ is its \emph{non-dominating locus}. The variety $Y$ is \emph{algebraically elliptic} if it carries an algebraic spray with $Z(s)=\emptyset$ \cite[Definition 5.6.13(e)]{F17}.
\end{definition}

Algebraically elliptic manifolds are Oka \cite[Corollary 5.6.14]{F17}. For a Zariski closed set $Z\subsetneq Y$ and $f_1\colon P\times\Op K\to Y\setminus Z$, let $(\mathrm H_Z)$ be condition (H) with the additional requirement that $f_t(P\times\Op K)\subset Y\setminus Z$ for all $t$.

\begin{theorem}\label[theorem]{thm:general}
Let $Y\subset\PP^N$ be a smooth projective variety with a rational spray $s\colon E\merto Y$, let $Z\subsetneq Y$ be a Zariski closed set containing $Z(s)$, let $X$, $G$, $K$, $X_0$, $P_0\subset P$, $\ell$ be as in \cref{thm:main2}, and let $f_1\colon P\times\Op K\to Y\setminus Z$ be a continuous family of holomorphic maps. Then the following hold.
\begin{enumerate}[(1)]
\item Conditions \textup{(A)} and $(\mathrm H_Z)$ are equivalent.
\item Under $(\mathrm H_Z)$, the conclusions \textup{(i)--(vii)} of \cref{thm:main2}\textup{(2)} hold.
\end{enumerate}
\end{theorem}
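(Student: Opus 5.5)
The direction (A)$\Rightarrow(\mathrm H_Z)$ is the easy one. Given $\epsilon$ small, take $F$ from (A) and set $f_0:=F$. Then $\tilde f_0=\widetilde F$ has no zeros on $P\times\Op K$. The homotopy $f_t$ from $F|_{P\times\Op K}$ to $f_1$ is built fibrewise by a holomorphic retraction: choose a Stein neighborhood $U$ of $K$ and a holomorphic tubular neighborhood of the graph of $f_1$ in $U\times(Y\setminus Z)$. Since $f_1(P\times K)$ is a compact subset of $Y\setminus Z$, this neighborhood has positive distance from $Z$, and a small enough $\epsilon$ puts $F$ inside it. Joining $F$ to $f_1$ along fibres, after shrinking $\Op K$, gives a homotopy of holomorphic maps into $Y\setminus Z$. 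It is constant for $p\in P_0$, and the $\ell$-jets along $X_0$ are kept because the two endpoint jets already coincide there. So the real content is (2), and (2) implies (A).

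For (2) I follow the spray-lifting scheme of \cite[Theorem 3.1]{F06} in the form used for \cref{thm:main2}.
\begin{enumerate}
\item Replace $s$ by a normalized rational spray on a twisted bundle $E=\cO_Y(-m)^{\oplus r}$, using \cref{prop:normalized}. Its exceptional set is still contained in $Z$. Take its conic lift, a polynomial map $\tilde s$ on the affine cone over $Y$, which substitutes homogeneous data in the base and fibre variables.
\item Since $f_t$ avoids $Z$ and $Z(s)\subset Z$, the spray is regular and dominating along the whole homotopy. By the standard compactness argument, subdivide $[0,1]$ and write $f_1$ on $P\times\Op K$ as the iterate
\[
f_1=s\bigl(\cdots s(f_0,\xi_1)\cdots,\xi_n\bigr),
\]
where the $\xi_j$ are holomorphic sections of the pulled-back bundles $f^*E$. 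These are chosen to vanish for $p\in P_0$ and to vanish to order $\ell$ on $X_0$; this uses the kernel of $ds|_{E_y}$ and a Cartan B splitting on a Stein neighborhood of $K$.
\item Each $\xi_j$ is a section of a line-bundle twist $L(f_0)^{-m}$ tensored with the trivial bundle. Approximate it on $K$ by $\sigma^{-k}\times$ global sections of $L(f_0)^{-m}\otimes L^{k'}$, with $L\in G$, by Abe's $G$-convex approximation \cite[Theorem 5.1]{Ab09}. Its parametric, interpolating version is \cref{prop:runge-interp}, which keeps the vanishing on $P_0$ and the order-$\ell$ vanishing on $X_0$. A single $\sigma$ serves for all $\epsilon$ because one may take $\widehat K_{X\setminus\{\sigma=0\}}\subset\Op K$.
\item Plug the approximants into the conic lift. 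Homogeneity makes $\widetilde F$ a tuple of global sections of $L(f_0)\otimes L^k$, which gives (v)--(vii).
\item The holomorphic homotopy $\hat f_t$ from $F$ to $f_1$ is obtained by scaling the approximated $\xi_j$ back to the exact ones inside the dominating region. This gives (i)--(iv).
\end{enumerate}

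I expect the main obstacle to be the non-vanishing of $\widetilde F$ on $P\times\Op K$ together with staying inside the regularity domain of $s$. The rational spray is regular only away from $I(s)$, and near $0_E$ only over $Y\setminus Z(s)$. The approximation must therefore keep every intermediate point $s(\cdots)$ inside a compact subset of $E|_{Y\setminus Z}$ on which the lifted polynomial map has no common zeros. The $Z$-avoidance hypothesis in $(\mathrm H_Z)$ is exactly what provides this compact set, and the smallness of the approximation is chosen after it. Once (2) holds, (A) follows with $\epsilon$ arbitrary, which closes the equivalence.
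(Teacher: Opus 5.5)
Your outline for (2) and for $(\mathrm H_Z)\Rightarrow$(A) is the paper's own argument. You normalize to a degree-one conic lift with $R_0=\id$ (\cref{prop:normalized}), lift the homotopy through the iterated spray (\cref{lem:lifting}), and approximate the lift by $\eta/\sigma^n$ via \cref{prop:runge-interp} with $\sigma$ from \cref{lem:hull}. You then substitute into the homogenized iterated lift and interpolate linearly (\cref{thm:lift}). Two steps are missing, however.

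First, you never check that $F=[\widetilde F]$ is a continuous family of meromorphic maps, i.e.\ that $\widetilde F(p,\cdot)$ vanishes identically on no irreducible component of $X$ (\cref{def:families}). Without this, neither (A) nor (2) is established. Near $\Khat_{X\setminus\{\sigma=0\}}$ it follows from the nonvanishing you discuss. On a component $X'$ of $X$ disjoint from $\Khat_{X\setminus\{\sigma=0\}}$, however, the approximation puts no constraint on $\eta$. The conic lift of a genuinely rational spray can vanish on $\widehat Y\times\C^{r'}$, over the indeterminacy locus of $s'\circ\varpi$. Hence $\widetilde F(p,\cdot)$, which equals $\sigma^{k}R^{(l)}(\tilde f_0,\eta/\sigma^n)$ off $\{\sigma=0\}$, may vanish identically on $X'$ for some $p$. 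For algebraic sprays this cannot happen by \cref{prop:normalized}(c), so the issue is specific to this theorem. The paper closes the gap as follows. Let $X''$ be the union of the components of $X$ not meeting $\Khat_{X\setminus\{\sigma=0\}}$, and apply \cref{prop:runge-interp} with the coherent ideal $\cI=\cI_{X_0}^{\ell}\cdot\cI_{X''}$. Then $\eta=0$ on $X''$, so there $\widetilde F(p,\cdot)=\sigma^k\tilde f_0(p,\cdot)$ because $R^{(l)}_0=\id$. This is not identically zero on any component, since neither $\sigma$ nor $\tilde f_0(p,\cdot)$ is.

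Second, in (A)$\Rightarrow(\mathrm H_Z)$ a holomorphic tubular neighbourhood of the graph of $f_1(p,\cdot)$ must meet two requirements you do not address. It must be fibred over $\Op K$, so that joining along its fibres gives maps $\Op K\to Y$; this is also the form that makes sense for singular $X$. It must also depend continuously on $p\in P$, and nonlinear neighbourhoods cannot simply be averaged with a partition of unity in $p$. No new tool is needed: use the same linearization as in your Step 2. The paper uses the fixed local dominating spray $s\colon\Omega\to Y\setminus Z$ as a fibrewise exponential map and patches only the linear right inverses of $ds_{0_y}|_{E_y}$ in the parameter. It then solves $s\circ\xi(p,\cdot)=F(p,\cdot)$ by the parametric inverse function theorem (\cref{lem:linearization} with $\Lambda=P$, $\varphi=f_1$, $g=F$). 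The homotopy $f_t:=s\circ\bigl((1-t)\xi\bigr)$ stays in $Y\setminus Z$ because $\Omega$ is defined with $Z$ in place of $Z(R)$. Parts (b) and (c) of that lemma give the $P_0$-condition and the order-$\ell$ jet condition you assert.
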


By (1), the condition $(\mathrm H_Z)$ is also necessary. For $Z=\emptyset$, \cref{thm:general} contains \cref{thm:main2}. Thus conclusions \textup{(1)} and \textup{(2)} of \cref{thm:main2} hold whenever $Y$ carries a rational spray $s$ with $Z(s)=\emptyset$. Algebraic ellipticity is used for \cref{thm:main1} and for the assertion following \cref{thm:main2}, through the Oka property of $Y$ and the nonvanishing of the conic lift in \cref{prop:normalized}(c).

The proof normalizes a rational spray to a degree-one conic lift (\cref{prop:normalized}), lifts the homotopy (\cref{lem:lifting}), approximates its lift (\cref{prop:runge-interp}) and substitutes back (\cref{prop:substitution}).

\begin{theorem}\label[theorem]{thm:ME}
Rational ellipticity is a birational invariant of smooth varieties, and
\[
\text{rational}\ \Longrightarrow\ \text{rationally elliptic}\ \Longrightarrow\ \text{unirational}.
\]
A smooth projective variety is rationally elliptic if and only if it carries an algebraic spray which is dominating at some point.
\end{theorem}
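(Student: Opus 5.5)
The proof will consist of four parts: birational invariance, the implication rational $\Rightarrow$ rationally elliptic, the implication rationally elliptic $\Rightarrow$ unirational, and the projective characterization.

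\emph{Birational invariance.} Let $\phi\colon Y\merto Y'$ be a birational map of smooth varieties, and let $s\colon E\merto Y$ be a rational spray with $Z(s)\ne Y$. Choose a nonempty Zariski open set $U\subset Y\setminus Z(s)$ on which $\phi$ restricts to an isomorphism onto an open $U'\subset Y'$. The first task is to transport $E$. An algebraic vector bundle on $U'$ need not extend to $Y'$, so I would replace $E$ by a trivial bundle: shrink $U$ so that $E|_U\cong U\times\C^r$, and work with $E'=Y'\times\C^r$. Define
\[
s'(y',v)=\phi\bigl(s(\phi^{-1}(y'),\tau(\phi^{-1}y')v)\bigr),
\]
where $\tau$ is the trivialization. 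This is a rational map $E'\merto Y'$. On $U'$ it satisfies $s'\circ0=\id$, so the identity holds as rational maps. Moreover $s'$ is regular near $0_{y'}$ with surjective differential for generic $y'$, because $s$ is regular and submersive along the zero section over $U$ and $\phi$ is a local isomorphism near the generic point. Hence $Z(s')\ne Y'$. By \cref{rem:rational-basic}(1), $Z(s')$ is Zariski closed.

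\emph{Rational $\Rightarrow$ rationally elliptic.} $\C^n$ carries the translation spray $s(y,v)=y+v$ on the trivial bundle $\C^n\times\C^n$. This is a rational spray with $Z(s)=\emptyset$, so the implication follows from birational invariance.

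\emph{Rationally elliptic $\Rightarrow$ unirational.} Fix $y_0\notin Z(s)$. The map $s|_{E_{y_0}}\colon E_{y_0}\cong\C^r\merto Y$ is regular near $0$ and has surjective differential there, so it is dominant. Since $\C^r$ is rational, $Y$ is unirational.

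\emph{Projective characterization.} If $Y$ is projective and carries an algebraic spray dominating at some point, then that spray is a rational spray with $Z(s)\ne Y$. For the converse, the key tool is \cref{prop:normalized}, applied to a rational spray with $Z(s)\ne Y$. It should produce a normalized rational spray on a bundle $\cO_Y(-m)^{\oplus r}$ with a degree-one conic lift, that is, a polynomial lift to the affine cone that is linear in the fiber variables. From such a lift I would build an honest morphism $\cO_Y(-m')^{\oplus r'}\to Y$, clearing indeterminacy by twisting with sufficiently negative $\cO_Y(-m)$. The plan for this step is to multiply the fiber variables by sections of $\cO_Y(m'')$ that vanish on the indeterminacy, in the spirit of the twisted bundles used in \cite{FL26}. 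The resulting morphism would remain dominating at the points where the original spray was regular and submersive and the multiplying sections do not vanish.

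I expect this converse to be the main obstacle. Making $s$ a genuine morphism on the whole total space, not merely near the zero section, requires the homogeneity and degree-one structure from the conic lift. The delicate point is to ensure that the zero locus of the denominators does not meet the images, so that the map is defined everywhere on $E$. I would try first to take the conic lift $\widetilde s(z,w)$ and substitute $w=\sigma(z)\,u$ for suitable homogeneous forms $\sigma$ of high degree. The goal is that $\widetilde s(z,\sigma u)$ has no common zeros for $z\ne0$ and all $u$. This uses the fact that $\widetilde s(z,0)=z$ is nonvanishing.
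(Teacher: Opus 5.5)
Your arguments for birational invariance, for rational $\Rightarrow$ rationally elliptic, and for rationally elliptic $\Rightarrow$ unirational are correct and essentially the paper's. For the middle implication, translations on $\C^n$ work just as well as the spray of \cref{ex:homogeneous}(a) on $\PP^N$.

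The gap is in the converse of the projective characterization. Substituting $\zeta=\sigma(w)u$ for a form $\sigma$ is a linear isomorphism on every fiber over a point $[w]$ with $\sigma(w)\ne0$. So it removes no indeterminacy there: over such a point the twisted map is indeterminate somewhere exactly when the original spray is. Hence your twisted map is a morphism only if $\sigma$ vanishes on the image in $Y$ of the indeterminacy locus. But $I(s)$ is only known to have codimension $\ge2$ in a total space of dimension $\dim Y+r$, so for $r\ge2$ it may dominate $Y$. For example, on $\PP^1\times\C^2$ the rational spray given in an affine coordinate by $s(y,\zeta)=y+\zeta_1+\zeta_2/(\zeta_1-1)$ is regular along the zero section but indeterminate along $\{\zeta=(1,0)\}$ over every $y\in\C$. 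This persists after composing with the surjection $\varpi$ of \cref{prop:normalized}. In such cases $\sigma\equiv0$ and the twisted spray is trivial. The degree-one normalization and the nonvanishing of $R(w,0)=w$ only control a neighborhood of the zero section. They say nothing about indeterminacy far out in the fibers, which is exactly what must be removed.

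The missing idea is a nonlinear reparametrization of the fibers that moves into the complement of $I(s)$. In \cref{thm:alg-spray} the paper does the following:
\begin{itemize}
\item It works over an affine $D(h)\supset A$ on which $E$ is trivial and $\dim\overline{I(s)}_y\le r-2$. This is why the locus $\{\dim\overline{I(s)}_y\ge r-1\}$ is excluded.
\item It picks directions $v_i$ whose lines miss $I(s)_a$ and whose points at infinity miss $\overline{I(s)}_a$.
\item It uses the shears $\varphi_i^{(t)}(y,\zeta)=(y,\zeta+t\,g_i(y,\pi_i(\zeta))\,v_i)$, where $g_i$ vanishes on $(\id\times\pi_i)(I(s))$ and $g_i(a,0)\ne0$.
\end{itemize}
These shears fix $I(s)$ pointwise. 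So their composition $\varphi$ maps $D(h)\times\C^r$ into the complement of $I(s)$, with $\partial\varphi/\partial t_i(a,0)=g_i(a,0)v_i$. Hence $s\circ\varphi$ is a morphism on $D(h)\times\C^r$ that is dominating at $A$. Only then does a twist by $h^k$ enter, through \cref{lem:localization}, to extend this morphism across $V(h)$. Your twisting step corresponds to this last extension; the fiberwise avoidance of $I(s)$ is what your plan lacks.
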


For smooth projective rational varieties, Banecki \cite[Corollary 1.9]{Ba24} proved algebraic ellipticity, extending the uniformly rational case \cite[Theorem 1.3]{AKZ24}, so \cref{thm:main1,thm:main2} apply to them directly.

The first implication in \cref{thm:ME} is not reversible: smooth cubic threefolds are nonrational \cite[Theorem 13.12]{CG72}, but are algebraically elliptic by \cite[Theorem 1.1]{KZ25}, and hence rationally elliptic. Very general smooth quartic double solids are algebraically elliptic and not even stably rational \cite[Corollary 1.2]{DKZ26}. These give nonrational smooth projective varieties $Y$ to which \cref{thm:main1,thm:main2} apply. The algebraic Oka-1 property is also a birational invariant of compact algebraic manifolds \cite[Corollary 3.4]{FL25}. Algebraic subellipticity and subellipticity of blow-ups are studied in \cite{LT17,KKT18}. For related results and the open birational questions, see \cite[\S4]{Za25}.

\section{Preliminaries}\label{sec:prelim}

\subsection{Notation}
Let $X$ be a complex space. For a holomorphic vector bundle $E\to X$ and an open set $U\subset X$ we write $\GO(U,E)$ for the holomorphic sections and $\cO(E)$ for the sheaf of holomorphic sections. For a holomorphic line bundle $L$ we write $L^k=L^{\otimes k}$.

\emph{Jets.} Let $Y$ be a complex manifold, $V\subset X$ an open set, $X_0\subset X$ a closed analytic subvariety with ideal sheaf $\cI_{X_0}$, and $\ell\ge1$ an integer. Holomorphic maps $g,h\colon V\to Y$ \emph{agree to order $\ell$ along $X_0$} if they are tangent to order $\ell$ along $X_0\cap V$ in the sense of \cite[Definition 1.3.2]{F17}, that is, $g=h$ on $X_0\cap V$ and, for local holomorphic coordinates $v$ on $Y$, the components of $v\circ g-v\circ h$ lie in $\cI_{X_0}^\ell$. We use the standard facts that this notion does not depend on the coordinates, is transitive, and is preserved under composition with holomorphic maps.

\emph{Projective spaces.} For $w\in\C^{N+1}\setminus\{0\}$ we write $[w]\in\PP^N$, so that $\cO_{\PP^N}(-1)_{[w]}=\C w$. For a projective variety $Y\subset\PP^N$ let $C(Y):=\{0\}\cup\{w\ne0:[w]\in Y\}$ be its affine cone and $\widehat Y:=C(Y)\setminus\{0\}$. If $L$ is a line bundle on $X$ and $\tilde f\in\GO(U,L)^{\oplus(N+1)}$, we write $\tilde f=u\,e$ in a local frame $e$ of $L$, with $u$ a $\C^{N+1}$-valued function. The condition $\tilde f(x)\ne0$ and the point $[\tilde f(x)]:=[u(x)]\in\PP^N$ do not depend on the frame. If $X$ is Stein and $\sigma\in\GO(X,L)$, then the open set $X\setminus\{\sigma=0\}$ is Stein \cite[Lemma 3.3]{Ab09}.

\subsection{Meromorphic maps and continuous families}
\begin{definition}\label[definition]{def:families}
Let $P$ be a topological space and $Y\subset\PP^N$ a smooth projective variety.
\begin{enumerate}[(a)]
\item A \emph{continuous family of holomorphic vector bundles} over $P\times X$ is a vector bundle with local trivializations over product open sets and transition functions continuous in $(p,x)$ and holomorphic in $x$. We write $E_p=E|_{\{p\}\times X}$ and $\GP(P'\times U,E)$ for continuous sections holomorphic in $x$. Such a family admits a continuous Hermitian metric when $P\times X$ is paracompact.
\item A \emph{continuous family of meromorphic maps} $F=[\widetilde F]\colon P\times X\merto Y$ consists of a continuous family of holomorphic line bundles $L$ and a tuple $\widetilde F\in\GP(P\times X,L)^{\oplus(N+1)}$ such that each $\widetilde F(p,\cdot)$ vanishes identically on no irreducible component of $X$ and $[\widetilde F(p,x)]\in Y$ whenever $\widetilde F(p,x)\ne0$. We write $L(F):=L$.

We say that $F$ is \emph{holomorphic on $P\times V$} if there is a continuous map $f\colon P\times V\to Y$ with holomorphic slices such that $f(p,x)=[\widetilde F(p,x)]$ whenever $\widetilde F(p,x)\ne0$. Requiring the tuple $\widetilde F$ to have no zeros there is stronger and will be stated explicitly. All approximation and homotopy conditions use tuples without zeros near $P\times K$.
\end{enumerate}
\end{definition}

Every continuous family of holomorphic maps $f\colon P\times X\to Y$, that is, every continuous map such that each $f(p,\cdot)$ is holomorphic, is a continuous family of meromorphic maps with $L:=f^*\cO_{\PP^N}(1)$ and $\widetilde F:=(f^*w_0,\dots,f^*w_N)$, where $w_0,\dots,w_N$ are the coordinate sections of $\cO_{\PP^N}(1)$. This tuple has no zeros.

\begin{lemma}\label[lemma]{lem:loc-triv}
Let $X$ be a Stein space, $P$ a Hausdorff space, $E\to P\times X$ a continuous family of holomorphic vector bundles, and $P'$ a contractible compact neighborhood of a point $p_0\in P$. Then $E|_{P'\times X}$ is isomorphic to $\pr_X^*E_{p_0}|_{P'\times X}$ as a continuous family of holomorphic vector bundles.
\end{lemma}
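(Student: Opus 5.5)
We reduce the statement to an Oka-principle problem for sections of an associated fibre bundle over the Stein space $P'\times X$, regarded as a parametrized family over $P'$. Consider the continuous family of holomorphic vector bundles $F:=\pr_X^*E_{p_0}|_{P'\times X}$. Let $\mathcal{I}\to P'\times X$ be the bundle whose fibre over $(p,x)$ is the set of linear isomorphisms $E_{p_0,x}\to E_{(p,x)}$. Its structure group is $GL_r(\C)$, acting on both sides, and its transition functions are continuous in $p$ and holomorphic in $x$. An isomorphism $F\cong E|_{P'\times X}$ of continuous families of holomorphic vector bundles is exactly a section of $\mathcal{I}$ that is continuous in $(p,x)$ and holomorphic in $x$. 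Over $\{p_0\}\times X$ there is the tautological holomorphic section $\id$. The fibre $GL_r(\C)$ is a complex Lie group, hence Oka, so we want a parametric Oka principle for sections of holomorphic fibre bundles over Stein spaces with fibre a complex homogeneous manifold.

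The first step is to produce a continuous section of $\mathcal{I}$ over $P'\times X$ extending $\id$ on $\{p_0\}\times X$. Since $P'$ is contractible, there is a contraction $h\colon [0,1]\times P'\to P'$ with $h_0=\mathrm{const}_{p_0}$ and $h_1=\id$. Topologically, $E|_{P'\times X}$ is isomorphic to $(h_1\times\id)^*E$, and by homotopy invariance of vector bundles over the paracompact space $[0,1]\times P'\times X$ this is isomorphic to $(h_0\times\id)^*E=\pr_X^*E_{p_0}$. One can arrange the isomorphism to be the identity over $p_0$, for instance by first choosing a contraction fixing $p_0$, or simply by composing with the inverse of its value at $p_0$ pulled back along $\pr_X$. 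This gives a continuous section $\phi_0$ of $\mathcal{I}$ with $\phi_0(p_0,\cdot)=\id$.

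The second step deforms $\phi_0$ into a section holomorphic in $x$, keeping it fixed at $p_0$. We apply the parametric Oka principle (POPAI, \cite[Theorem 1.2]{F23}, or \cite[Theorem 5.4.4]{F17} in its version for sections of stratified holomorphic fibre bundles with Oka fibres), using parameter space $P'$ and closed parameter subset $\{p_0\}$ where the section is already holomorphic. The result is a homotopy to a family $\phi_1$ holomorphic in $x$, with $\phi_1(p_0,\cdot)=\id$. Since the values lie in $\mathcal{I}$, each $\phi_1(p,x)$ is invertible, so $\phi_1$ is the required isomorphism. Continuity of $\phi_1^{-1}$ in $(p,x)$ and holomorphy in $x$ follow from Cramer's rule.

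The main obstacle is the second step. The bundle $\mathcal{I}$ is not a holomorphic fibre bundle over $P'\times X$ in the usual sense, since it varies only continuously in $p$, so POPAI does not apply verbatim. I would handle this by noting that the Grauert–Gromov theory with continuous parameters treats precisely such \emph{continuous families} of holomorphic bundles, as in \cite[\S5.4]{F17} and \cite{F23}, where the base of the fibration can itself vary with the parameter. If that is unavailable, I would instead cover $P'$ by finitely many small sets over which the transition data are close to those at nearby points, and solve successive Cartan-type splitting problems $\theta=\alpha\beta^{-1}$ for $GL_r$-valued maps continuous in $p$. This splitting, with parameters, is an instance of the parametric Oka principle for maps into $GL_r(\C)$. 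The contractibility of $P'$ removes the topological obstruction, and Stein-ness of $X$ supplies Cartan's Theorem B with parameters.
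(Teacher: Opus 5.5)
Your Step 1 is sound and matches the paper: contractibility of $P'$ gives a topological isomorphism, which you normalize at $p_0$. There is a genuine gap in Step 2, however, and that step carries the whole content of the lemma. The results you cite do not let the fibration depend on the parameter: \cite[Theorem 5.4.4]{F17} treats sections of a \emph{fixed} stratified holomorphic fibre bundle $Z\to X$, and \cite[Theorem 1.2]{F23} treats maps into a fixed Oka manifold. Your bundle $\mathcal I$ varies only continuously in $p$, so, as you note yourself, it is not a holomorphic fibre bundle over any complex space ($P'\times X$ is not a Stein space, nor a complex space at all). Asserting that the Grauert--Gromov theory ``treats precisely such continuous families'' amounts to assuming a parametric Oka--Grauert principle for continuous families of $GL_r(\C)$-bundles, which is essentially the statement to be proved.

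Your fallback does not close the gap either. A splitting $\theta=\alpha\beta^{-1}$ with respect to a cover of the \emph{parameter} space is not a Cartan-type lemma, since Cartan's lemma splits maps over a Cartan pair in $X$. Moreover, for small $P_i$ you do not a priori have isomorphisms $E|_{P_i\times X}\cong\pr_X^*E_{p_i}$ over all of the noncompact $X$. Closeness of transition data holds only over compact subsets of $X$, so you would need an exhaustion and convergence argument, which is the real work.

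The missing idea is how the paper removes the parameter from the fibration. Let $\mathcal G:=C(P',GL_r(\C))$, the group of units of the Banach algebra $C(P',M_r(\C))$; this is a Banach Lie group.
\begin{enumerate}
\item A function on $P'\times U$ that is jointly continuous and holomorphic in $x$ is the same as a holomorphic map $U\to C(P',M_r(\C))$. On singular $U$ this uses a local embedding into a polydisc $D$, Bungart's continuous lifting for the surjection $\cO(D)\to\cO(U)$, and the Cauchy formula.
\item Contractibility and compactness of $P'$ give frames of $E$ over sets $P'\times U$ with the \emph{whole} $P'$, not just over small product sets. Hence the transition functions form a holomorphic $\mathcal G$-cocycle on $X$.
\item Thus $E|_{P'\times X}$ and $\pr_X^*E_{p_0}$ become holomorphic principal $\mathcal G$-bundles over the Stein space $X$, and your topological isomorphism is a topological isomorphism of $\mathcal G$-bundles.
\item Bungart's Oka--Grauert principle for Banach Lie groups \cite[Theorem 8.1]{Bu68} makes this isomorphism holomorphic, which is exactly an isomorphism of continuous families.
\end{enumerate}
This turns your parametric problem with a varying fibration into a nonparametric Oka--Grauert problem with an infinite-dimensional structure group, to which a known theorem actually applies.
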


\begin{proof}
We may work componentwise in rank $r$. Let $\mathcal G$ be the group of invertible elements of $C(P',M_r(\C))$, namely $C(P',GL_r(\C))$. A jointly continuous function on $P'\times U$ with holomorphic slices defines a holomorphic $C(P',M_r(\C))$-valued map on $U$. Indeed, locally embed $U$ in a polydisc $D$ and use the lifting lemma in \cite[\S 6]{Bu68} (proved in the appendix of \cite{Ca58}) for the Fr\'echet surjection $\cO(D)\to\cO(U)$. The Cauchy formula then gives Banach-valued holomorphicity. The converse follows by evaluation.

For $x_0\in X$, contractibility gives a continuous frame over $P'\times\{x_0\}$. Extend its coefficients independently of $x$ in local frames and glue with a partition of unity in $p$. Compactness of $P'$ then gives a frame over $P'\times U$ for a neighborhood $U$ of $x_0$. Thus the transition functions define a holomorphic principal $\mathcal G$-bundle on $X$. The same holds for $\pr_X^*E_{p_0}$. Contractibility makes the two families topologically isomorphic. Compactness identifies this with an isomorphism of principal $\mathcal G$-bundles. Bungart's Oka--Grauert principle \cite[Theorem 8.1]{Bu68} makes it holomorphic, hence an isomorphism of continuous families.
\end{proof}

\subsection{Abe's hulls}
\begin{definition}[Abe \cite{Ab09}]\label[definition]{def:abe-hull}
Let $G\subset\Pic(X)$ be a \emph{submonoid}, i.e.\ a subsemigroup containing the trivial class $[\cO_X]$. For a compact set $K\subset X$ let $\cS(G,K)$ be the set of all $\sigma\in\GO(X,L)$, $L\in G$, with $\{\sigma=0\}\cap K=\emptyset$. The \emph{$G$-hull} of $K$ is
\[
\widetilde K_{X,G}:=\bigcap_{\sigma\in\cS(G,K)}\bigl(X\setminus\{\sigma=0\}\bigr),
\]
and $K$ is \emph{$G$-convex} if $\widetilde K_{X,G}=K$.
\end{definition}

Abe \cite{Ab09} works with subsemigroups. For a submonoid $G$, the $G$-hull of every compact set is compact \cite[Proposition 4.1, Corollary 4.5]{Ab09}, whereas for a subsemigroup it can be all of $X$ \cite[Example 4.2]{Ab09}. Adding $[\cO_X]$ to a subsemigroup can only shrink the $G$-hull, so $G$-convex sets remain $G$-convex. Every $\cO(X)$-convex compact set $K$ is $G$-convex for every submonoid $G$: if $x\notin K$, choose $h\in\cO(X)$ with $|h(x)|>\max_K|h|$. Then $\sigma:=h-h(x)\in\GO(X,\cO_X)$ vanishes at $x$ but not on $K$. For $G=\{[\cO_X]\}$, the $G$-hull is the meromorphically convex hull of \cite{Ab05}. If $X$ has no isolated points and $G=\Pic(X)$, it is the hull of $K$ with respect to hypersurfaces in the sense of \cite{Hi71} (see \cite[Proposition 2.2]{Ab09}). On a Stein manifold $X$, these two hulls agree for every compact set if and only if $\operatorname{Hom}(H_2(X;\Z),\Z)=0$ \cite[Theorem 1]{Co99}.

For $\sigma\in\GO(X,L)$ with $\{\sigma=0\}\cap K=\emptyset$ we write $\Khat_{X\setminus\{\sigma=0\}}$ for the $\cO(X\setminus\{\sigma=0\})$-convex hull of $K$ in the Stein space $X\setminus\{\sigma=0\}$ \cite[Lemma 3.3]{Ab09}. It is compact and has a basis of Stein open neighborhoods \cite[Proposition 2.5.1, Corollary 2.5.4]{F17}.

\begin{lemma}\label[lemma]{lem:hull}
Let $X$ be a Stein space, $G\subset\Pic(X)$ a submonoid, $K\subset X$ a $G$-convex compact set, $X_0\subset X$ a closed analytic subvariety and $V$ an open neighborhood of $K$. Then there are an integer $\mu\ge0$, $L_1,\dots,L_\mu\in G$ and $\sigma_j\in\GO(X,L_j)$ $(1\le j\le\mu)$ such that, with $A:=\bigcup_{j=1}^{\mu}\{\sigma_j=0\}$:
\begin{enumerate}[(a)]
\item $\{\sigma_j=0\}\cap K=\emptyset$, and $\sigma_j$ vanishes identically on no irreducible component of $X$ or of $X_0$, for every $j$,
\item $\Khat_{X\setminus A}\subset V$.
\end{enumerate}
If $\Khat_X\subset V$, for instance if $K$ is $\cO(X)$-convex, we choose $\mu=0$ and $A=\emptyset$. We put $L:=L_1\otimes\dots\otimes L_\mu\in G$ and $\sigma:=\sigma_1\cdots\sigma_\mu\in\GO(X,L)$, with $L:=\cO_X$ and $\sigma:=1$ if $\mu=0$, so that $\{\sigma=0\}=A$.
\end{lemma}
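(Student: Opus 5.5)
The plan is a compactness argument: each point of $X\setminus V$ is cut out by one section that avoids $K$, and finitely many such sections suffice to push the hull of $K$ into $V$.

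\emph{Trivial case.} If $\Khat_X\subset V$ there is nothing to prove: take $\mu=0$. Otherwise, set $C:=\Khat_X\setminus V$. Since $\Khat_X$ is compact and $V$ is open, $C$ is compact.

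\emph{Step 1: one section for each point of $C$.} Fix $x\in C$. Since $x\notin K=\widetilde K_{X,G}$, there is $\tau\in\cS(G,K)$, say $\tau\in\GO(X,L')$ with $L'\in G$, such that $\tau(x)=0$ and $\{\tau=0\}\cap K=\emptyset$. We now modify $\tau$ so that it vanishes identically on no irreducible component of $X$ or of $X_0$.

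On any irreducible component $X'$ of $X$ or of $X_0$ that meets $K$, $\tau$ is not identically zero, since it has no zeros on $K$. So the only problem is components missing $K$.

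Because $K$ is compact and the irreducible components of $X$ and $X_0$ form locally finite families, only finitely many components meet $\Khat_X$. Components that meet neither $K$ nor $\Khat_X$ are dealt with as follows: $\Khat_X$ is $\cO(X)$-convex, so there is $h\in\cO(X)$ with $h\equiv1$ near $\Khat_X$ and $h$ vanishing on no component. Concretely, by Cartan's Theorem B, choose $h$ equal to $1$ near $\Khat_X$ and equal to $0$ nowhere-identically off it, interpolating at one chosen point of each remaining component.

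The cleaner way to do all of this at once is to replace $\tau$ by
\[
\tau':=\tau+g\,\rho,
\]
where $\rho\in\GO(X,L')$ is a section chosen by Theorem B to be nonzero at one chosen point of each component of $X$ and $X_0$ on which $\tau\equiv0$. The coefficient $g\in\cO(X)$ is chosen to vanish at $x$ and to be so small on $K$ that $\tau'$ still has no zeros on $K$. Then $\tau'(x)=0$, $\{\tau'=0\}\cap K=\emptyset$, and $\tau'$ vanishes identically on no component.

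There is one subtlety: infinitely many components of $X$ or $X_0$ may need fixing. This is still fine, because we only need to prescribe values on a discrete set of chosen points, and Theorem B allows interpolation on a discrete set. Making $\tau'$ simultaneously nonzero at all the chosen points, vanishing at $x$, and nonvanishing on $K$ is the technical heart of the proof. I would do it as follows. First take $g$ with $g(x)=0$ and $|g|$ small on $K$. Then take $\rho$ with prescribed nonzero values at the chosen points. Finally multiply $\rho$ by a generic constant to avoid cancellation at those points; a countable set of forbidden constants does no harm.

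\emph{Step 2: compactness.} For $x\in C$ let $\sigma_x:=\tau'$ be the section from Step 1, and set $W_x:=\{\sigma_x=0\}$. The set
\[
U_x:=\bigl\{y:\ |\sigma_x(y)|_h<\min_K|\sigma_x|_h\bigr\}
\]
is an open neighborhood of $x$ for any Hermitian metric $h$ on the line bundle. The hull of $K$ in $X\setminus W_x$ is controlled by the function $1/\sigma_x$ in local frames, or more intrinsically by holomorphic functions of the form $s/\sigma_x$ with $s\in\GO(X,L_x)$. By Theorem B choose $s$ with $s(x)\neq0$; then the function $s/\sigma_x$ is unbounded near $x$ while bounded on $K$. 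Consequently, for some neighborhood $U_x$ of $x$, every point of $U_x\setminus W_x$ satisfies $|s/\sigma_x|>\max_K|s/\sigma_x|$. Hence $U_x\cap\Khat_{X\setminus W_x}=\emptyset$.

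Cover $C$ by finitely many sets $U_{x_1},\dots,U_{x_\mu}$ and let $\sigma_j:=\sigma_{x_j}$. Then
\[
\Khat_{X\setminus A}\subset\Khat_X\cap\bigcap_j\Khat_{X\setminus W_{x_j}}\subset V,
\]
where the first inclusion holds by monotonicity of hulls under shrinking the domain, since $X\setminus A\subset X\setminus W_{x_j}$ and $X\setminus A\subset X$. Both properties (a) and (b) follow, and the definitions of $L$ and $\sigma$ are immediate.
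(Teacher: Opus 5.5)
There is a genuine gap in Step 1. You require the modified section to satisfy $\tau'(x)=0$, and this contradicts (a) whenever $x\in C$ is an isolated point of $X_0$. In that case $\{x\}$ is a zero-dimensional irreducible component of $X_0$, and ``not vanishing identically'' on it means $\tau'(x)\ne0$. This case really occurs. Take $X=\C$, $G=\{[\cO_X]\}$, $K=\{|z|=1\}$ (which is $G$-convex), $V=\{1/2<|z|<2\}$ and $X_0=\{0\}$. Then $0\in C=\{|z|\le1/2\}$, and no admissible section can vanish at $0$. The lemma itself still holds, for instance with $\sigma=z-\epsilon$ for small $\epsilon\ne0$, since $K$ is $\cO(\C\setminus\{\epsilon\})$-convex. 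But the section that separates $0$ from $K$ does not vanish at $0$, so the scheme ``one section vanishing at each $x\in C$'' cannot work in general.

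Independently of this, the perturbation $\tau'=\tau+g\rho$ does not do what you claim even in positive dimension. On a component $X'$ with $\tau|_{X'}\equiv0$ you have $\tau'=g\rho$ on $X'$, so you also need $g(p_{X'})\ne0$, which you never arrange. Interpolating $g$ on $\{x\}\cup\{p_{X'}\}$ would fix this, except exactly when $p_{X'}=x$ is forced. The cancellation the generic constant must avoid occurs on components where $\tau\not\equiv0$, at points where $\tau\ne0$. Finally, making $g$ small on $K$ before choosing $\rho$ does not control $g\rho$ on $K$; only the small constant does.

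The missing idea is that separation is stable under small perturbations, which is the observation at the start of the paper's proof. Your Step 2 estimate already gives the part you need, provided you fix the data before perturbing:
\begin{itemize}
\item Take $\tau$ with $\tau(x)=0$, a section $s\in\GO(X,L')$ with $s(x)\ne0$, and a bound $M>0$ for $|s/\tau|$ on $K$.
\item Take a compact neighborhood $U$ of $x$ in a trivializing chart on which, in a local frame, $|\hat s|\ge c_0>0$ and $|\hat\tau|\le c_0/(8M)$.
\item Let $\tau'$ satisfy $|\hat\tau'-\hat\tau|\le c_0/(8M)$ on $U$ and $\max_K|s/\tau'|\le 2M$.
\end{itemize}
Then $|s/\tau'|\ge 4M>\max_K|s/\tau'|$ on $U\setminus\{\tau'=0\}$, hence $U\cap\Khat_{X\setminus\{\tau'=0\}}=\emptyset$, whether or not $\tau'(x)=0$.

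So you can drop $g$ entirely. Take $\tau'=\tau+c\rho$, with $\rho$ nonzero at one chosen point of each component where $\tau\equiv0$, and $c$ small and outside a countable set (or use Baire's theorem as the paper does). Then cover $C$ by the interiors of the sets $U$.

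Repaired this way, your argument differs usefully from the paper's. You perturb each section before covering, and you only need stability near a zero of the original section. So you never need the second case of the paper's observation (points where $\sigma_0\ne0$, handled via Abe's Lemmas 3.1 and 3.2). Nor do you need compactness of the hulls $\Khat_{X\setminus\{\sigma'=0\}}$.
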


\begin{proof}
For closed analytic subvarieties $X''\subset X'\subset X$ disjoint from $K$, we have $\Khat_{X\setminus X'}\subset\Khat_{X\setminus X''}\subset\Khat_X$, since holomorphic functions restrict to smaller open sets. If $\Khat_X\subset V$, choose $\mu=0$. Hence we may assume otherwise.

We first observe the following. Let $L'\in\Pic(X)$, $\sigma_0\in\GO(X,L')$ with $\{\sigma_0=0\}\cap K=\emptyset$, and let $C\subset X$ be a compact set with $C\cap\Khat_{X\setminus\{\sigma_0=0\}}=\emptyset$. Then every $\sigma'\in\GO(X,L')$ sufficiently close to $\sigma_0$ satisfies $\{\sigma'=0\}\cap K=\emptyset$ and $C\cap\Khat_{X\setminus\{\sigma'=0\}}=\emptyset$. Indeed, otherwise there are $\sigma'_m\to\sigma_0$ and $x_m\in C\cap\Khat_{X\setminus\{\sigma'_m=0\}}$ converging to some $x\in C$. If $\sigma_0(x)=0$, then for $\tau\in\GO(X,L')$ with $\tau(x)\ne0$ the functions $\tau/\sigma'_m$ are bounded on $K$ uniformly in $m$ but unbounded at the points $x_m$. If $\sigma_0(x)\ne0$, then $x\notin\Khat_{X\setminus\{\sigma_0=0\}}$, and approximating a function separating $x$ from $K$ by $v/\sigma_0^{n}$ using \cite[Lemmas 3.1 and 3.2]{Ab09} gives the same contradiction with the functions $v/(\sigma'_m)^{n}$.

Let $\cS(G,K)$ be as in \cref{def:abe-hull} and put $K':=\Khat_X\setminus V$, a compact set. For $\sigma'\in\cS(G,K)$ the hull $\Khat_{X\setminus\{\sigma'=0\}}$ is compact, and since $K$ is $G$-convex,
\[
\bigcap_{\sigma'\in\cS(G,K)}\Khat_{X\setminus\{\sigma'=0\}}\subset\bigcap_{\sigma'\in\cS(G,K)}\bigl(X\setminus\{\sigma'=0\}\bigr)=\widetilde K_{X,G}=K\subset V.
\]
Hence the open sets $X\setminus\Khat_{X\setminus\{\sigma'=0\}}$, $\sigma'\in\cS(G,K)$, cover $K'$, and by compactness there are $\sigma_1,\dots,\sigma_\mu\in\cS(G,K)$, $\sigma_j\in\GO(X,L_j)$, and compact sets $C_j\subset X\setminus\Khat_{X\setminus\{\sigma_j=0\}}$ with $K'\subset C_1\cup\dots\cup C_\mu$.

For every irreducible component $X'$ of $X$ or of $X_0$, the set of sections of $L_j$ vanishing identically on $X'$ is a closed linear subspace of $\GO(X,L_j)$, proper by Cartan's Theorem A and hence nowhere dense. There are countably many such $X'$ because $X$ is second countable, and $\GO(X,L_j)$ is a Fr\'echet space for the topology of uniform convergence on compact sets. By the above observation (with $\sigma_0:=\sigma_j$ and $C:=C_j$) and Baire's theorem, we may replace each $\sigma_j$ by a nearby section which vanishes identically on no such $X'$, keeping $\{\sigma_j=0\}\cap K=\emptyset$ and $C_j\cap\Khat_{X\setminus\{\sigma_j=0\}}=\emptyset$. Then (a) holds, and since $\Khat_{X\setminus A}\subset\Khat_X\cap\Khat_{X\setminus\{\sigma_j=0\}}$ for every $j$, we get $\Khat_{X\setminus A}\subset\Khat_X\setminus(C_1\cup\dots\cup C_\mu)\subset\Khat_X\setminus K'\subset V$, which is (b).
\end{proof}

Consequently, every compact $G$-convex set $K$ has a basis of Stein open neighborhoods. Indeed, for a neighborhood $V$ of $K$, take $\sigma$ as in \cref{lem:hull}. Then $\widehat K_{X\setminus\{\sigma=0\}}\subset V$ has a Stein open neighborhood in the Stein space $X\setminus\{\sigma=0\}$ contained in $V$.

\section{Rational sprays}\label{sec:sprays}

\subsection{Rational sprays}
Let $s\colon E\merto Y$ be a rational spray on a smooth variety $Y$ (\cref{def:intro-spray}).

\begin{remark}\label[remark]{rem:rational-basic}
\begin{enumerate}[(1),wide=0pt]
\item $Z(s)$ is Zariski closed: the set of $y\in Y$ with $0_y\in I(s)$ is the preimage of $I(s)$ under the zero section, and on its complement the condition that $ds_{0_y}|_{E_y}$ is not surjective is Zariski closed.
\item \emph{Trivial bundles.} If $E$ is trivial over a Zariski open set $U$ and $\rho\colon Y\times\C^{r}\merto E$ is the induced rational map (an isomorphism over $U$), then $s_U:=s\circ\rho$ is a rational spray on the trivial bundle with $Z(s_U)\subset Z(s)\cup(Y\setminus U)$.
\end{enumerate}
\end{remark}

\subsection{Conic lifts}
From now on $Y\subset\PP^N$ is a smooth projective variety with affine cone $C(Y)$ and punctured cone $\widehat Y=C(Y)\setminus\{0\}$. For weights $m=(m_1,\dots,m_q)\in\Z_{\ge0}^q$ and ranks $r=(r_1,\dots,r_q)\in\Z_{\ge1}^q$ put
\[
\cO_Y(m;r):=\bigoplus_{j=1}^q\cO_Y(-m_j)^{\oplus r_j},\qquad |r|:=r_1+\dots+r_q.
\]
Note that the summands are the negative twists $\cO_Y(-m_j)$, whose fibers are $\cO_Y(-m_j)_{[w]}=\C\,w^{\otimes m_j}\subset(\C^{N+1})^{\otimes m_j}$. For $\lambda\in\C^*$ and $\zeta=(\zeta_1,\ldots,\zeta_q)\in\C^{r_1}\times\cdots\times\C^{r_q}=\C^{|r|}$ put $\lambda^{-m}\cdot\zeta:=(\lambda^{-m_1}\zeta_1,\dots,\lambda^{-m_q}\zeta_q)$. For such $\zeta$ and $w\in\widehat Y$, let
\[
\zeta\cdot w^{\otimes m}:=(\zeta_j\cdot w^{\otimes m_j})_j\in\cO_Y(m;r)_{[w]},
\]
where $\zeta_j\cdot w^{\otimes m_j}:=(\zeta_{j,1}w^{\otimes m_j},\ldots,\zeta_{j,r_j}w^{\otimes m_j})$. This identifies $\cO_Y(m;r)$ with $(\widehat Y\times\C^{|r|})/\C^*$ for the action $\lambda\cdot(w,\zeta)=(\lambda w,\lambda^{-m}\cdot\zeta)$.

For every algebraic vector bundle $E$ there is a surjective morphism of vector bundles $\varpi\colon\cO_Y(-m)^{\oplus r}\to E$ for $m\gg0$, since $E\otimes\cO_Y(m)$ is generated by global sections \cite[n$^\circ$\,66, Th\'eor\`eme 2(a)]{Se55}, and for a rational spray $s\colon E\merto Y$ the composition $s\circ \varpi\colon\cO_Y(-m)^{\oplus r}\merto Y$ is a rational spray with $Z(s\circ \varpi)=Z(s)$. Indeed, a local splitting of $\varpi$ shows that regularity of $s\circ \varpi$ at the zero section implies regularity of $s$ there, and surjectivity of $\varpi$ preserves domination. Hence it suffices to consider the bundles $\cO_Y(m;r)$. Throughout this section $H^0$ denotes algebraic sections.

\begin{definition}\label[definition]{def:conic-lift}
Let $s\colon\cO_Y(m;r)\merto Y$ be a rational spray. A \emph{conic lift} of $s$ of degree $d\ge1$ is a morphism of affine varieties $R\colon C(Y)\times\C^{|r|}\to C(Y)$ (i.e.\ the restriction of a polynomial map $\C^{N+1}\times\C^{|r|}\to\C^{N+1}$ mapping $C(Y)\times\C^{|r|}$ into $C(Y)$) such that
\[
R(\lambda\cdot(w,\zeta))=\lambda^dR(w,\zeta)\quad(\lambda\in\C^*),
\]
and $s\circ\pi=[R]$ as rational maps, where $\pi\colon\widehat Y\times\C^{|r|}\to\cO_Y(m;r)$, $(w,\zeta)\mapsto\zeta\cdot w^{\otimes m}$, is the quotient map. Since $[R]$ is a $\C^*$-invariant morphism on $\{R\ne0\}$, $s$ is then regular at $\zeta\cdot w^{\otimes m}$ with $s(\zeta\cdot w^{\otimes m})=[R(w,\zeta)]$ whenever $(w,\zeta)\in\widehat Y\times\C^{|r|}$ and $R(w,\zeta)\ne0$. We write $R=\sum_{k=0}^{\nu}R_k$, where $R_k=(R_{k,0},\dots,R_{k,N})$ is homogeneous of degree $k$ in $\zeta$ and $\nu:=\deg_\zeta R$. Then $R_k(\lambda\cdot(w,\zeta))=\lambda^dR_k(w,\zeta)$, and $R_0(w)=R(w,0)$. Put
\[
Z(R):=Z(s)\cup\{[w]\in Y:R_0(w)=0\}.
\]
\end{definition}

\begin{remark}\label[remark]{rem:spray-basic}
\begin{enumerate}[(1),wide=0pt]
\item \emph{The degree-$0$ part.} Since $s(0_{[w]})=[w]$ for generic $[w]$ and the condition $R_0(w)\in\C w$ is closed, we have $R_0(w)=\gamma(w)w$ on $\widehat Y$, where $\gamma$ is a regular function on $\widehat Y$, homogeneous of degree $d-1$, i.e.\ a section of $\cO_Y(d-1)$. If $d=1$ and $R_0\not\equiv0$, then $\gamma$ is a nonzero constant and we normalize $R_0(w)=w$. Then $Z(R)=Z(s)$ and $s$ is regular on a neighborhood of the zero section.
\item \emph{Homogenization.} Let $R^h(w,\zeta,t):=\sum_{k=0}^{\nu}t^{\nu-k}R_k(w,\zeta)$. For $t\ne0$, $R^h(w,\zeta,t)=t^{\nu}R(w,\zeta/t)$, so $R^h(C(Y)\times\C^{|r|}\times\C)\subset C(Y)$ by closedness of $C(Y)$.
\item If $R_0(w)\ne0$, then $[R]$ is a morphism near $(w,0)$ inducing $s$, so $s$ is regular near $0_y$ and $s(0_y)=y$, where $y:=[w]$. The set $Z(R)$ is Zariski closed by its definition and \cref{rem:rational-basic}(1), $Y\setminus Z(R)$ is a complex manifold, and
\[
\Omega:=\{\zeta\cdot w^{\otimes m}:[w]\in Y\setminus Z(R),\ R(w,\zeta)\ne0,\ [R(w,\zeta)]\in Y\setminus Z(R)\}
\]
is an open neighborhood of the zero section of $\cO_Y(m;r)|_{Y\setminus Z(R)}$ on which $s$ is holomorphic with values in $Y\setminus Z(R)$. It is a \emph{local dominating spray}: $s(0_y)=y$ and $ds_{0_y}(\cO_Y(m;r)_y)=T_yY$ for $y\in Y\setminus Z(R)$.
\end{enumerate}
\end{remark}

\subsection{Existence of conic lifts}
\begin{lemma}\label[lemma]{lem:normal-form}
Let $Y\subset\PP^N$ be a smooth projective variety and $s\colon E\merto Y$ a rational spray on an algebraic vector bundle $p\colon E\to Y$. There are a line bundle $L\to Y$ and algebraic sections $\tilde s=(\tilde s_0,\dots,\tilde s_N)\in H^0(E,p^*L)^{\oplus(N+1)}$ such that:
\begin{enumerate}[(a)]
\item $s=[\tilde s_0:\dots:\tilde s_N]$, and $s$ is regular at $v\in E$ if and only if $\tilde s(v)\ne0$,
\item $\tilde s_i=\sum_{k=0}^{\nu}\tilde s_{k,i}$, where $\tilde s_{k,i}\in H^0(Y,L\otimes\Sym^kE^*)$ is regarded as a section of $p^*L$ which is homogeneous of degree $k$ on the fibers of $E$, and there is $\gamma\in H^0(Y,L\otimes\cO_Y(-1))$ such that $\tilde s_{0,i}=\gamma\otimes w_i$ for $0\le i\le N$, where $w_0,\dots,w_N\in H^0(Y,\cO_Y(1))$ are the coordinate sections,
\item $\{\gamma=0\}\subset Z(s)$. If $s$ is regular along the zero section, then $\gamma$ has no zeros, so that $L\cong\cO_Y(1)$, and under this isomorphism $\tilde s_{0,i}=w_i$.
\end{enumerate}
\end{lemma}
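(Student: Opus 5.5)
We view $s$ as a rational map $E\merto\PP^N$ and use the standard description of rational maps from a normal (here smooth) variety to projective space. Since $E$ is smooth, $\Pic$ of the total space behaves well: the rational map $s$ is given by $N+1$ sections of some line bundle $\mathcal M$ on $E$, without common divisorial zeros, i.e.\ $s=[\tilde s_0:\dots:\tilde s_N]$ with $\tilde s_i\in H^0(E,\mathcal M)$ having no common zero of codimension one. Concretely, take $\mathcal M:=\overline{s}^*\cO_{\PP^N}(1)$ on the domain of definition $E\setminus I(s)$, whose complement has codimension $\ge2$. The line bundle and its sections $s^*w_i$ extend uniquely across $I(s)$ by smoothness (Cartier $=$ Weil, Hartogs for sections). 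Conversely, where the extended sections have no common zero, $[\tilde s]$ is a morphism extending $s$. Where they do have a common zero, $s$ is not regular: if it were, $s^*\cO(1)$ would be the bundle there, generated by the $s^*w_i$. This gives (a).

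Next, $\Pic(E)=p^*\Pic(Y)$ since $E\to Y$ is a vector bundle over a smooth variety (homotopy invariance of $\Pic$ / $\mathrm{CH}^1$), so $\mathcal M\cong p^*L$ for a line bundle $L$ on $Y$. Then $p_*p^*L=L\otimes\bigoplus_k\Sym^kE^*$ (algebraic functions on fibers are polynomials). Hence each $\tilde s_i$ decomposes into a finite sum of fiberwise homogeneous pieces $\tilde s_{k,i}\in H^0(Y,L\otimes\Sym^kE^*)$. Restricting to the zero section gives $\tilde s_{0,i}=0_E^*\tilde s_i\in H^0(Y,L)$.

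For the form of $\tilde s_{0}$, use $s\circ0_E=\id_Y$. Since $0_E(Y)\not\subset I(s)$, the tuple $(\tilde s_{0,i})$ is not identically zero. On the dense open set where it is nonzero, $[\tilde s_{0,0}:\dots:\tilde s_{0,N}]=[w_0:\dots:w_N]$. Thus $\tilde s_{0,i}w_j=\tilde s_{0,j}w_i$ in $H^0(Y,L\otimes\cO_Y(1))$, an identity of sections on a variety, holding on a dense set and hence everywhere. Where $w_j\ne0$, set $\gamma:=\tilde s_{0,j}/w_j$, a section of $L\otimes\cO_Y(-1)$. These local definitions agree by the identity, and the $w_j$ have no common zero, so $\gamma$ is global and regular with $\tilde s_{0,i}=\gamma\otimes w_i$. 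This gives (b).

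For (c), at $y$ with $\gamma(y)=0$ all $\tilde s_i$ vanish at $0_y$, so $0_y\in I(s)$ by (a) and $y\in Z(s)$. If $s$ is regular along the zero section, then $Z(s)$ contains no such $y$, so $\gamma$ is a nowhere vanishing section of $L\otimes\cO_Y(-1)$. This trivializes it, giving $L\cong\cO_Y(1)$ with $\gamma\mapsto1$ and $\tilde s_{0,i}=w_i$.

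The main obstacle is the first step: extending $s^*\cO(1)$ and its sections across $I(s)$, and proving the equivalence in (a). The key input is that $I(s)$ has codimension $\ge2$ for a rational map from a normal variety. We also need $\Pic(E)=p^*\Pic(Y)$, which I would cite from the homotopy invariance of Chow groups for vector bundles.
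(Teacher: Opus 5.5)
Your proposal is correct and follows essentially the same route as the paper: extend $s^*\cO_Y(1)$ and the pulled-back coordinate sections across the codimension-two set $I(s)$, identify the extended bundle with $p^*L$ via $\Pic(E)=p^*\Pic(Y)$, decompose by fiber degree using $p_*\cO_E=\bigoplus_k\Sym^kE^*$, and obtain $\gamma$ from $s\circ0_E=\id_Y$. Your explicit gluing of $\gamma=\tilde s_{0,j}/w_j$ from the identity $\tilde s_{0,i}w_j=\tilde s_{0,j}w_i$ is a concrete version of the paper's closed-condition argument, and your citation of homotopy invariance of $\mathrm{CH}^1$ plays the same role as the paper's reference for the Picard group of the total space.
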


\begin{proof}
Since $E$ is smooth and $Y$ is projective, $I(s)$ has codimension at least two. The bundle $s^*\cO_Y(1)$ on $E\setminus I(s)$ extends to $E$, as do its coordinate sections. Their common zero set is $I(s)$, since it does not meet $E\setminus I(s)$, and $s=[\tilde s_0:\dots:\tilde s_N]$ is regular wherever some $\tilde s_i$ does not vanish. Since $p^*\colon\Pic(Y)\to\Pic(E)$ is an isomorphism \cite[Theorem 5]{Ma75}, the extended bundle is $p^*L$, proving (a).

The identity $p_*\cO_E^{\mathrm{alg}}=\bigoplus_{k\ge0}\Sym^kE^*$ gives the finite homogeneous decomposition in (b). On the zero section, $[\tilde s(0_y)]=y$ generically. The closed condition $\tilde s(0_y)\in L_y\otimes\cO_Y(-1)_y$ therefore holds everywhere, giving $\gamma$ and $\tilde s_{0,i}=\gamma\otimes w_i$. By (a), $\gamma(y)=0$ precisely where $s$ is indeterminate at $0_y$, which proves (c).
\end{proof}

\begin{proposition}\label[proposition]{prop:normalized}
Let $Y\subset\PP^N$ be a smooth projective variety, $s\colon E\merto Y$ a rational spray on $p\colon E\to Y$, and let $L$, $\tilde s$ and $\gamma$ be as in \cref{lem:normal-form}. Put $E':=E\otimes L^{-1}\otimes\cO_Y(1)$. We also write $p$ for the projection $E'\to Y$.
\begin{enumerate}[(a)]
\item For $v'\in E'_y$ we have $\gamma(y)\,v'\in E_y$, and $s'(v'):=s(\gamma\,v')$ defines a rational spray $s'\colon E'\merto Y$ with $s'=[\tilde s'_0:\dots:\tilde s'_N]$, where
\[
\tilde s'_i:=w_i+\sum_{k\ge1}\gamma^{k-1}\tilde s_{k,i}\in H^0\bigl(E',p^*\cO_Y(1)\bigr)\qquad(0\le i\le N).
\]
In particular $s'$ is regular on a neighborhood of the zero section of $E'$, and $Z(s')\subset Z(s)$. If $s$ is regular along the zero section, then $L\otimes\cO_Y(-1)$ is trivialized by $\gamma$, $E'=E$ and $s'=s$.
\item For $m\gg0$ and a surjective morphism of vector bundles $\varpi\colon\cO_Y(-m)^{\oplus r'}\to E'$, the rational spray $s'\circ \varpi$ has a conic lift $R$ of weight $(m)$ and degree $1$ with $R_0(w)=w$. In particular $Z(R)=Z(s'\circ \varpi)\subset Z(s)$.
\item If $s$ is an algebraic spray, then $s'=s$ and the lift in \textup{(b)} satisfies $R(w,\zeta)\ne0$ on $\widehat Y\times\C^{r'}$. In particular every algebraically elliptic smooth projective variety has a rational spray with a conic lift $R$ of degree $1$ such that $Z(R)=\emptyset$ and $R$ has no zeros on $\widehat Y\times\C^{r'}$.
\end{enumerate}
\end{proposition}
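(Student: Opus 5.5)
The plan is to verify (a) by a direct computation in fibers, then build the conic lift in (b) by pulling $\tilde s'$ back along $\varpi$ and to the cone, and finally read off (c) from (b).

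\emph{Part (a).} Since $\gamma\in H^0(Y,L\otimes\cO_Y(-1))$, multiplication by $\gamma(y)$ maps $E'_y=E_y\otimes L_y^{-1}\otimes\cO_Y(1)_y$ to $E_y$, so $v'\mapsto\gamma v'$ is a morphism of bundles $E'\to E$. It is an isomorphism over $\{\gamma\ne0\}$, which is dense. Hence $s'(v'):=s(\gamma v')$ is a well-defined rational map, because its domain meets $E'\setminus I$. Substituting $v=\gamma v'$ into $\tilde s_i=\gamma\otimes w_i+\sum_{k\ge1}\tilde s_{k,i}$, homogeneity gives $\tilde s_{k,i}(\gamma v')=\gamma^k\tilde s_{k,i}(v')$. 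Here $\tilde s_{k,i}(v')$ is read as a section of $L\otimes(L^{-1}\otimes\cO_Y(1))^{k}$, and a bookkeeping of twists shows that $\gamma^{k-1}\tilde s_{k,i}(v')$ is a section of $\cO_Y(1)$. Therefore
\[
\tilde s_i(\gamma v')=\gamma\cdot\tilde s'_i(v').
\]
Dividing by $\gamma$ on the dense set $\{\gamma\ne0\}$ gives $s'=[\tilde s'_0:\dots:\tilde s'_N]$.

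The consequences of (a) then follow directly:
\begin{itemize}
\item At $0_y$ we have $\tilde s'(0_y)=(w_i(y))_i\ne0$, so $s'$ is regular near the zero section and $s'\circ0=\id$.
\item Where $\gamma(y)\ne0$, the bundle map is an isomorphism, so the differentials of $s'|_{E'_y}$ and $s|_{E_y}$ at $0_y$ have the same image. Where $\gamma(y)=0$, we have $y\in Z(s)$ by \cref{lem:normal-form}(c). Hence $Z(s')\subset Z(s)$.
\item If $s$ is regular along the zero section, \cref{lem:normal-form}(c) identifies $L=\cO_Y(1)$ with $\gamma=1$, so $E'=E$ and $s'=s$.
\end{itemize}

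\emph{Part (b).} Choose $m\gg0$ such that $E'^{*}\otimes\cO_Y(m)$ is globally generated. Dually this gives a surjection $\varpi\colon\cO_Y(-m)^{\oplus r'}\to E'$, since $E'\otimes\cO_Y(m)$ is globally generated for $m$ large. Composition with $\varpi$ preserves $Z$, as noted before \cref{def:conic-lift}. The pullbacks $\varpi^*\tilde s'_{k,i}$ are sections of $\cO_Y(1)\otimes\Sym^k(\cO_Y(m)^{\oplus r'})$, i.e.\ of $\cO_Y(1+km)$, with polynomial coefficients of degree $k$ in $\zeta$.

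Now pass to the cone. For $m$ large (enlarging $m$ if necessary, using Serre vanishing), the restriction $H^0(\PP^N,\cO(j))\to H^0(Y,\cO_Y(j))$ is surjective for all $j\ge m$. This lets me lift each coefficient to a homogeneous polynomial in $w$ of degree $1+km$; this is the main technical point. The only possibly nonsurjective degree is $j=1$ in $k=0$, but there $w_i$ lifts trivially to $w_i$. Taking these lifts gives
\[
R(w,\zeta)=w+\sum_{k\ge1}R_k(w,\zeta).
\]
Each $R_k$ satisfies $R_k(\lambda w,\lambda^{-m}\zeta)=\lambda^{1+km-km}R_k=\lambda R_k$, so $R$ has degree $1$.

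It remains to check that $R$ maps $C(Y)\times\C^{r'}$ into $C(Y)$. At points where $R\ne0$ and $s'\circ\varpi$ is defined, $[R]$ lies in $Y$; these points are dense in $C(Y)\times\C^{r'}$. Since $C(Y)$ is closed and contains $0$, we conclude $R(C(Y)\times\C^{r'})\subset C(Y)$, and $s'\circ\varpi\circ\pi=[R]$. Finally $R_0=w$, so $Z(R)=Z(s'\circ\varpi)$ by \cref{rem:spray-basic}(1), and $Z(s'\circ\varpi)\subset Z(s)$ by (a).

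\emph{Part (c).} If $s$ is a morphism, then $s'=s$ by (a). The tuple $\tilde s$ has no zeros on $E$ by \cref{lem:normal-form}(a). For $w\in\widehat Y$, the vector $R(w,\zeta)$ represents $\tilde s(\varpi(\zeta\cdot w^{\otimes m}))$ in the frame $w$, so it is nonzero. Combining with (b) and $Z(s)=\emptyset$ for algebraically elliptic $Y$ gives the final claim.
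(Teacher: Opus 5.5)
Your proposal is correct and follows the paper's proof essentially step for step: the identity $\tilde s_i(\gamma v')=\gamma\,\tilde s'_i(v')$ gives (a), lifting the coefficients of $\tilde s'\circ\varpi$ in degrees $1+km\ge m+1$ via Serre vanishing gives (b), and nonvanishing of $\tilde s\circ\varpi$ gives (c). The only variation is that you obtain $R(C(Y)\times\C^{r'})\subset C(Y)$ by density and closedness of $C(Y)$, rather than from $R(0,\zeta)=0$ plus the pointwise identification. One small slip: global generation of $E'^*\otimes\cO_Y(m)$ does not dually give a surjection $\cO_Y(-m)^{\oplus r'}\to E'$; it is global generation of $E'\otimes\cO_Y(m)$ that does, as you then correctly say. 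In any case the statement only quantifies over given surjections $\varpi$.
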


\begin{proof}
(a) Since $E'\otimes(L\otimes\cO_Y(-1))=E$, we have $\gamma(y)v'\in E_y$. Here $\gamma^{k-1}\tilde s_{k,i}$ is evaluated on $v'^{\otimes k}$. It is a section of $(L\otimes\cO_Y(-1))^{k-1}\otimes L\otimes(L^{-1}\otimes\cO_Y(1))^{k}=\cO_Y(1)$. By \cref{lem:normal-form}(b),
\[
\tilde s_i(\gamma v')=\sum_k\gamma^k\tilde s_{k,i}(v'^{\otimes k})=\gamma\Bigl(w_i+\sum_{k\ge1}\gamma^{k-1}\tilde s_{k,i}(v'^{\otimes k})\Bigr)=\gamma\,\tilde s'_i(v').
\]
Hence $[\tilde s'_0(v'):\dots:\tilde s'_N(v')]=s(\gamma(y)v')=s'(v')$ for $v'\in E'_y$ with $\gamma(y)\ne0$, so $\tilde s'=(\tilde s'_0,\dots,\tilde s'_N)$ represents $s'$. In particular $s'$ is regular at every $v'\in E'$ with $\tilde s'(v')\ne0$, for instance near the zero section, where $\tilde s'_i=w_i$. For $y\notin Z(s)$ we have $\gamma(y)\ne0$, and $ds'_{0_y}|_{E'_y}=ds_{0_y}\circ\gamma(y)$ is surjective, so $Z(s')\subset Z(s)$. If $s$ is regular along the zero section, \cref{lem:normal-form}(c) gives the last statement.

(b) For $m\gg0$, $E'\otimes\cO_Y(m)$ is generated by global sections \cite[n$^\circ$\,66, Th\'eor\`eme 2(a)]{Se55}, and $H^0(\PP^N,\cO(n))\to H^0(Y,\cO_Y(n))$ is onto for $n\ge m+1$, since $H^1(\PP^N,\cI_Y^{\mathrm{alg}}(n))=0$ for $n\gg0$ \cite[n$^\circ$\,66, Th\'eor\`eme 2(b)]{Se55}, where $\cI_Y^{\mathrm{alg}}$ is the ideal sheaf of $Y$ in $\PP^N$. Choose such $m$ and $\varpi$. By the description of the regular functions on the total space of a vector bundle in the proof of \cref{lem:normal-form}, applied to $\cO_Y(-m)^{\oplus r'}$, we have $\tilde s'_i\circ \varpi=\sum_ka_{k,i}$ with $a_{k,i}\in H^0\bigl(Y,\cO_Y(1)\otimes\Sym^k(\cO_Y(m)^{\oplus r'})\bigr)$ and $a_{0,i}=w_i$. That is, $a_{k,i}$ is a homogeneous polynomial of degree $k$ in $\zeta\in\C^{r'}$ whose coefficients are sections of $\cO_Y(1+km)$. For $k\ge1$ these coefficients lift by the surjectivity above to homogeneous polynomials of degree $1+km$ on $\C^{N+1}$, which gives polynomials $R_{k,i}(w,\zeta)$ of bidegree $(1+km,k)$. Put $R_{0,i}:=w_i$, $R_k:=(R_{k,0},\dots,R_{k,N})$ and $R:=\sum_kR_k$. By construction, pairing $(\tilde s'\circ \varpi)(\zeta\cdot w^{\otimes m})\in\cO_Y(1)_{[w]}\otimes\C^{N+1}$ with $w\in\cO_Y(-1)_{[w]}$ gives $R(w,\zeta)$. Hence $(s'\circ \varpi)(\zeta\cdot w^{\otimes m})=[R(w,\zeta)]\in Y$ where $R(w,\zeta)\ne0$. Since moreover $R(0,\zeta)=0$, $R$ maps $C(Y)\times\C^{r'}$ into $C(Y)$. Thus $R$ is a conic lift of degree $1$ with $R_0(w)=w$, and $Z(R)=Z(s'\circ \varpi)$ by \cref{rem:spray-basic}(1). Since $\varpi$ is surjective, $Z(s'\circ \varpi)=Z(s')\subset Z(s)$.

(c) An algebraic spray is regular along the zero section, so $s'=s$ by (a). Since $s\circ \varpi$ is a morphism, the components of $\tilde s\circ \varpi$ have no common zero, so $R$ has no zeros on $\widehat Y\times\C^{r'}$. An algebraically elliptic variety carries an algebraic spray with $Z(s)=\emptyset$.
\end{proof}

\subsection{Homogeneous manifolds}
\begin{example}\label[example]{ex:homogeneous}
\begin{enumerate}[(a),wide=0pt]
\item For $\PP^N$, let $J:=\{(i,j):0\le i\ne j\le N\}$ and $A(\zeta):=\prod_{(i,j)\in J}(I+\zeta_{ij}E_{ij})$ \textup{(}fixed order, $E_{ij}$ the matrix units\textup{)}. Then $s([w],\zeta):=[A(\zeta)w]$ is an algebraic spray on $\PP^N\times\C^J$ with $Z(s)=\emptyset$, and $R(w,\zeta):=A(\zeta)w$ is a conic lift of weight $0$ and degree $1$.
\item Let $G'\subset GL_{N+1}(\C)$ be a connected semisimple linear algebraic group with Lie algebra $\mathfrak g$, and $Y\subset\PP^N$ a closed $G'$-orbit. Choose nilpotent $\theta_1,\dots,\theta_M\in\mathfrak g$ spanning $\mathfrak g$ and put $A(\zeta):=\prod_i\exp(\zeta_i\theta_i)$. Then $s(y,\zeta):=A(\zeta)\cdot y$ is an algebraic spray on $Y\times\C^M$ with $Z(s)=\emptyset$, and $R(w,\zeta):=A(\zeta)w$ is a conic lift of weight $0$ and degree $1$ such that $R$ has no zeros on $\widehat Y\times\C^M$.
\end{enumerate}
\end{example}

\begin{proof}
(a) $A(\zeta)$ is invertible, polynomial in $\zeta$, and $A(0)=I$. If $w_j\ne0$, the vectors $E_{ij}w=w_je_i$ ($i\ne j$) span $\C^{N+1}/\C w\cong T_{[w]}\PP^N$, so $Z(s)=\emptyset$.
(b) The span of the nilpotent cone is an ideal containing all root spaces, hence equals $\mathfrak g$. Since $\theta_i$ is nilpotent, $A(\zeta)$ is polynomial in $\zeta$, and $A(\zeta)\in G'$ preserves $C(Y)$. The differential of $\zeta\mapsto A(\zeta)\cdot y$ at $0$ has image the image of $\mathfrak g\to T_yY$, which is onto since $Y$ is homogeneous.
\end{proof}

\subsection{Iteration}
Let $Y\subset\PP^N$ be a smooth projective variety, $R=\sum_kR_k$ a conic lift of degree $d$ of a rational spray $s$ on $\cO_Y(m;r)$, and assume $Z(R)\ne Y$. Define inductively $R^{(0)}(w):=w$, $R^{(1)}:=R$ and
\[
R^{(l)}(w;\zeta_1,\dots,\zeta_l):=R\bigl(R^{(l-1)}(w;\zeta_1,\dots,\zeta_{l-1}),\zeta_l\bigr),\qquad\zeta_i\in\C^{|r|},
\]
and write $R^{(l)}_0(w):=R^{(l)}(w;0)$ for its degree-$0$ part. Put $m^{(l)}:=(m,dm,\dots,d^{l-1}m)$, $r^{(l)}:=(r,\dots,r)$ ($l$ times) and
\[
E^{(l)}:=\cO_Y(m^{(l)};r^{(l)})=\cO_Y(m;r)\oplus\cO_Y(dm;r)\oplus\dots\oplus\cO_Y(d^{l-1}m;r),
\]
whose points we write $\zeta\cdot w^{\otimes m^{(l)}}$, $\zeta=(\zeta_1,\dots,\zeta_l)$. The corresponding $\C^*$-action on $\widehat Y\times\C^{l|r|}$ is $\lambda\cdot(w,\zeta)=(\lambda w,\lambda^{-m}\cdot\zeta_1,\lambda^{-dm}\cdot\zeta_2,\dots,\lambda^{-d^{l-1}m}\cdot\zeta_l)$. Put
\begin{multline*}
\Omega^{(l)}:=\Bigl\{\zeta\cdot w^{\otimes m^{(l)}}:(w,\zeta)\in\widehat Y\times\C^{l|r|},\ R^{(i)}(w;\zeta_1,\dots,\zeta_i)\ne0\\
\text{and }\bigl[R^{(i)}(w;\zeta_1,\dots,\zeta_i)\bigr]\in Y\setminus Z(R)\text{ for }0\le i\le l\Bigr\}.
\end{multline*}

\begin{lemma}\label[lemma]{lem:iteration}
\begin{enumerate}[(a)]
\item $R^{(l)}(\lambda\cdot(w,\zeta))=\lambda^{d^l}R^{(l)}(w,\zeta)$ for $\lambda\in\C^*$, $R^{(l)}(C(Y)\times\C^{l|r|})\subset C(Y)$, and $R^{(l)}_0=R_0^{\circ l}$.
\item There is a unique rational spray $s^{(l)}\colon E^{(l)}\merto Y$ such that $R^{(l)}$ is a conic lift of degree $d^l$ of $s^{(l)}$.
\item $\Omega^{(l)}$ is an open neighborhood of the zero section of $E^{(l)}|_{Y\setminus Z(R)}$, and $s^{(l)}\colon\Omega^{(l)}\to Y\setminus Z(R)$ is holomorphic. For $\zeta\cdot w^{\otimes m^{(l)}}\in\Omega^{(l)}$ and $l\ge2$,
\begin{equation}\label{eq:iterate}
s^{(l)}(\zeta\cdot w^{\otimes m^{(l)}})=s\bigl(\zeta_l\cdot R^{(l-1)}(w;\zeta_1,\dots,\zeta_{l-1})^{\otimes m}\bigr).
\end{equation}
\end{enumerate}
\end{lemma}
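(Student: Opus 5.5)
The three parts go in order, each by induction on $l$ from the defining recursion $R^{(l)}=R(R^{(l-1)},\zeta_l)$.

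For (a), the case $l=1$ is the definition of a conic lift. Assume the claim for $l-1$. Under $\lambda\cdot(w,\zeta)$, the inner term $R^{(l-1)}(w;\zeta_1,\dots,\zeta_{l-1})$ is multiplied by $\mu:=\lambda^{d^{l-1}}$. The last variable $\zeta_l$ is multiplied by $\lambda^{-d^{l-1}m}=\mu^{-m}$. So the pair $(R^{(l-1)},\zeta_l)$ is transformed by the $\C^*$-action with parameter $\mu$, and homogeneity of $R$ gives the factor $\mu^d=\lambda^{d^l}$. Invariance of $C(Y)$ follows inductively from $R(C(Y)\times\C^{|r|})\subset C(Y)$. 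Setting $\zeta=0$ gives $R^{(l)}_0=R_0\circ R^{(l-1)}_0=R_0^{\circ l}$.

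For (b), consider the quotient map $\pi^{(l)}\colon\widehat Y\times\C^{l|r|}\to E^{(l)}$, $(w,\zeta)\mapsto\zeta\cdot w^{\otimes m^{(l)}}$. It is a principal $\C^*$-bundle, Zariski locally trivial, since $\widehat Y\to Y$ is the $\C^*$-bundle of $\cO_Y(-1)$. On the Zariski open set $\{R^{(l)}\ne0\}$, the map $[R^{(l)}]$ is a morphism. By (a) it is $\C^*$-invariant, and it takes values in $Y$ because $R^{(l)}$ maps into $C(Y)$. Hence it descends to a rational map $s^{(l)}\colon E^{(l)}\merto Y$ with $s^{(l)}\circ\pi^{(l)}=[R^{(l)}]$; uniqueness holds because $\pi^{(l)}$ is surjective, hence dominant. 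To see that $s^{(l)}$ is a rational spray, we need $R^{(l)}_0(w)\not\equiv0$ on $\widehat Y$ and $[R_0^{\circ l}(w)]=[w]$ generically. By \cref{rem:spray-basic}(1), $R_0(w)=\gamma(w)w$ with $\gamma\not\equiv0$, because $Z(R)\ne Y$. Iterating gives $R_0^{\circ l}(w)=c(w)w$ with $c$ a product of values of $\gamma$ at multiples of $w$, hence $c\not\equiv0$. So $0_{E^{(l)}}(Y)\not\subset I(s^{(l)})$ and $s^{(l)}\circ0=\id$.

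For (c), the defining conditions of $\Omega^{(l)}$ are $\C^*$-invariant by (a) and open. The preimage of $\Omega^{(l)}$ in $\widehat Y\times\C^{l|r|}$ is therefore open and saturated, and $\Omega^{(l)}$ is open in the quotient. It contains the zero section over $Y\setminus Z(R)$: at $\zeta=0$ we have $R^{(i)}=c_i(w)w$ with $c_i(w)\ne0$, since $\gamma$ does not vanish off $Z(R)$, and $[w]\in Y\setminus Z(R)$. Holomorphy of $s^{(l)}=[R^{(l)}]$ there is clear. For \eqref{eq:iterate}, apply the relation $s\circ\pi=[R]$ at the point $(R^{(l-1)}(w;\zeta_1,\dots,\zeta_{l-1}),\zeta_l)\in\widehat Y\times\C^{|r|}$. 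This point lies in $\widehat Y$ because $R^{(l-1)}\ne0$ on $\Omega^{(l)}$, and $R$ does not vanish there. Definition \ref{def:conic-lift} says $s$ is regular at $\zeta_l\cdot(R^{(l-1)})^{\otimes m}$ with value $[R^{(l)}]$, which is the formula.

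The main obstacle is the descent step in (b) together with verifying the spray property. In particular, we need the non-vanishing of $R_0^{\circ l}$ generically and on $Y\setminus Z(R)$. This rests on the factorization $R_0=\gamma\,w$ and on $Z(R)\supset\{\gamma=0\}$.
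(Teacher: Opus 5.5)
Your proposal is correct and follows essentially the same route as the paper. Part (a) is done by induction on $l$. Part (b) descends the $\C^*$-invariant morphism $[R^{(l)}]$ along the principal $\C^*$-bundle $\widehat Y\times\C^{l|r|}\to E^{(l)}$, using $R_0(w)=\gamma(w)w$ with $\gamma\ne0$ off $Z(R)$. Part (c) uses that the defining conditions of $\Omega^{(l)}$ are open and invariant, and obtains \eqref{eq:iterate} from the conic-lift relation at $(R^{(l-1)},\zeta_l)$. You supply details the paper leaves implicit, namely the reparametrization $\mu=\lambda^{d^{l-1}}$ in (a) and uniqueness in (b) via dominance of the quotient map.
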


\begin{proof}
Equivariance, the cone condition and $R^{(l)}_0=R_0^{\circ l}$ follow by induction. By \cref{rem:spray-basic}(1), $R_0(w)=\gamma(w)w$ with $\gamma(w)\ne0$ over $Y\setminus Z(R)$, so $R^{(i)}_0(w)\in\C^*w$ there. On the invariant open set $\{R^{(l)}\ne0\}$, the morphism $[R^{(l)}]$ descends along the principal $\C^*$-bundle $\widehat Y\times\C^{l|r|}\to E^{(l)}$. It defines the unique rational map $s^{(l)}$, and the preceding observation gives $s^{(l)}(0_y)=y$ generically. This proves (a) and (b).

The defining conditions of $\Omega^{(l)}$ are open, invariant and satisfied on the zero section over $Y\setminus Z(R)$. On this set $s^{(l)}=[R^{(l)}]$ is holomorphic. The recursion for $R^{(l)}$, followed by projectivization, gives \eqref{eq:iterate}.
\end{proof}

For weight zero, $s^{(l)}$ is the ordinary iterated composition of $s$. For positive weights, \eqref{eq:iterate} uses the cone representative of each intermediate image to identify the next fiber.

\subsection{Substitution}
Let $L_1$ be a holomorphic line bundle on a complex space $X$, $x\in X$, and $\tilde f\in L_{1,x}\otimes\C^{N+1}$ with $\tilde f\ne0$ and $[\tilde f]\in Y$. For $\kappa=(\kappa_j)_j$ with $\kappa_j=(\kappa_{j,1},\dots,\kappa_{j,r_j})\in(L_{1,x}^{-m_j})^{\oplus r_j}$ we put
\[
\kappa\otimes\tilde f^{\otimes m}:=\bigl(\langle\kappa_{j,\mu},\tilde f^{\otimes m_j}\rangle\bigr)_{j,\mu},
\]
where $\tilde f^{\otimes m_j}\in L_{1,x}^{m_j}\otimes(\C^{N+1})^{\otimes m_j}$ and $\langle\cdot,\cdot\rangle$ is the contraction $L_{1,x}^{-m_j}\otimes L_{1,x}^{m_j}\to\C$. If $e$ is a basis of $L_{1,x}$, $\tilde f=e\otimes u$ and $\kappa_{j,\mu}=\hat\kappa_{j,\mu}(e^*)^{\otimes m_j}$, then $\kappa\otimes\tilde f^{\otimes m}=\hat\kappa\cdot u^{\otimes m}$. In particular $\kappa\otimes\tilde f^{\otimes m}\in\cO_Y(m;r)_{[\tilde f]}$.

\begin{proposition}\label[proposition]{prop:substitution}
Let $Y\subset\PP^N$ be a smooth projective variety and $R=\sum_{k=0}^{\nu}R_k$ a conic lift of degree $d$ of a rational spray $s\colon\cO_Y(m;r)\merto Y$. Let $X$ be a complex space, $L_1$ and $L_2$ holomorphic line bundles on $X$, $\tilde f\in\GO(X,L_1)^{\oplus(N+1)}$ such that $[\tilde f(x)]\in Y$ whenever $\tilde f(x)\ne0$, and put $f:=[\tilde f]$. Let $\sigma\in\GO(X,L_2)$, $n\in\Z_{\ge0}$, and $\eta=(\eta_j)$ with $\eta_j\in\GO(X,L_1^{-m_j}\otimes L_2^n)^{\oplus r_j}$. We define the section $\widetilde F$ by
\[
\widetilde F:=R^h(\tilde f,\eta,\sigma^n)\in\GO\bigl(X,L_1^d\otimes L_2^{\nu n}\bigr)^{\oplus(N+1)}.
\]
\begin{enumerate}[(i)]
\item $[\widetilde F(x)]\in Y$ whenever $\widetilde F(x)\ne0$.
\item On $X\setminus\{\sigma=0\}$ we have $\widetilde F=\sigma^{\nu n}R(\tilde f,\eta/\sigma^n)$. Hence, if $\sigma(x)\ne0$, $\tilde f(x)\ne0$ and $R(\tilde f(x),(\eta/\sigma^n)(x))\ne0$, then $\widetilde F(x)\ne0$ and $[\widetilde F(x)]=s\bigl((\eta/\sigma^n)(x)\otimes\tilde f(x)^{\otimes m}\bigr)$.
\item If $\tilde f$ has no zeros on an open set $U\subset X$, then $\kappa\mapsto\kappa\otimes\tilde f^{\otimes m}$ is an isomorphism from $\bigoplus_j(L_1^{-m_j})^{\oplus r_j}|_U$ onto $f^*\cO_Y(m;r)|_U$.
\item The same holds for continuous families over $P\times X$, where $P$ is a topological space \textup{(}$\sigma$ independent of $p$\textup{)}. Then $\widetilde F\in\GP(P\times X,L_1^d\otimes\pr_X^*L_2^{\nu n})^{\oplus(N+1)}$.
\end{enumerate}
\end{proposition}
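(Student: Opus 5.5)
The proof is a pointwise computation in local frames. First I check that $\widetilde F$ is a well-defined global section of the stated bundle. Fix $x\in X$, a frame $e_1$ of $L_1$ and a frame $e_2$ of $L_2$ near $x$. Write $\tilde f=u\,e_1$, $\sigma=\tau e_2$ and $\eta_j=\hat\eta_j\,e_1^{-m_j}e_2^{n}$, where $u,\tau,\hat\eta_j$ are holomorphic functions. Set $\hat\eta=(\hat\eta_j)_j$ and define locally
\[
\widetilde F:=R^h(u,\hat\eta,\tau^n)\,e_1^{d}e_2^{\nu n}.
\]
The frame-independence follows from homogeneity. Each $R_k$ satisfies $R_k(\lambda\cdot(w,\zeta))=\lambda^dR_k(w,\zeta)$ and is homogeneous of degree $k$ in $\zeta$. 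Hence, for $\lambda,\mu\in\C^*$,
\[
t^{\nu-k}R_k(w,\zeta)\mapsto(\mu^n t)^{\nu-k}R_k(\lambda w,\lambda^{-m}\mu^n\zeta)=\lambda^d\mu^{\nu n}\,t^{\nu-k}R_k(w,\zeta).
\]
Here $\lambda$ records the change of $e_1$ and $\mu$ the change of $e_2$. This is exactly the transformation rule for a section of $L_1^d\otimes L_2^{\nu n}$, so the local expressions glue.

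For (i), suppose $\widetilde F(x)\ne0$. If $u(x)=0$, then $\widetilde F(x)=R^h(0,\hat\eta,\tau^n)\in C(Y)$ by \cref{rem:spray-basic}(2). If $u(x)\ne0$, then $u(x)\in\widehat Y$ by hypothesis, and again $R^h(u(x),\hat\eta(x),\tau(x)^n)\in C(Y)$ by \cref{rem:spray-basic}(2), since $C(Y)$ is closed. In either case, a nonzero point of $C(Y)$ projects to $Y$.

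For (ii), on $\{\sigma\ne0\}$ the identity $R^h(w,\zeta,t)=t^\nu R(w,\zeta/t)$ gives $\widetilde F=\sigma^{\nu n}R(\tilde f,\eta/\sigma^n)$. Here $\eta/\sigma^n$ is a section of $\bigoplus_j(L_1^{-m_j})^{\oplus r_j}$. Now assume in addition that $\tilde f(x)\ne0$ and $R(\tilde f(x),(\eta/\sigma^n)(x))\ne0$. Then $\widetilde F(x)\ne0$. By \cref{def:conic-lift}, $s$ is regular at $\hat\zeta\cdot u^{\otimes m}$ with value $[R(u,\hat\zeta)]$. The substitution formula stated before the proposition identifies $\hat\zeta\cdot u^{\otimes m}$ with $(\eta/\sigma^n)(x)\otimes\tilde f(x)^{\otimes m}$, which gives the claimed formula for $[\widetilde F(x)]$.

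For (iii), on $U$ the section $\tilde f$ has no zeros. Since $[\tilde f]=f$, we have $f^*\cO_Y(-1)=\C\tilde f\cong L_1^{-1}\otimes\C\cdot u$, that is, $\tilde f$ trivializes $L_1\otimes f^*\cO_Y(-1)$. Hence $f^*\cO_Y(-m_j)\cong L_1^{-m_j}$ via $\kappa_j\mapsto\langle\kappa_j,\tilde f^{\otimes m_j}\rangle$, fiberwise. In frames this map is $\hat\kappa\mapsto\hat\kappa\,u^{\otimes m_j}$, and it is nonzero because $u\ne0$. Taking direct sums gives the isomorphism.

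For (iv), the same formulas apply using the local product trivializations of \cref{def:families}(a). The expression $R^h(u,\hat\eta,\tau^n)$ is a polynomial in functions that are continuous in $(p,x)$ and holomorphic in $x$, so it has the same regularity. The gluing argument is unchanged because the transition functions are of the same type.

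There is no real obstacle here. The only point requiring care is the bookkeeping of weights in the gluing step, in particular checking that the $\lambda^{-m}$-action on $\zeta$ matches the twists $L_1^{-m_j}$ on the $\eta_j$.
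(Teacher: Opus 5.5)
Your proposal is correct and follows the paper's proof essentially step by step: local frames with weight bookkeeping for frame-independence, \cref{rem:spray-basic}(2) for (i), the identity $R^h(w,\zeta,t)=t^{\nu}R(w,\zeta/t)$ for (ii), and contraction with $\tilde f$ for (iii). The only difference is in the frame-independence step. The paper chooses weighted-homogeneous polynomial representatives of $R$ and checks the weights monomial by monomial, whereas you apply the equivariance of $R_k$ pointwise. That is legitimate because $u(x)\in C(Y)$ for every $x$ by the hypothesis on $\tilde f$, and you should state this explicitly, since equivariance is only guaranteed on $C(Y)\times\C^{|r|}$.
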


\begin{proof}
In local frames $e_1$ of $L_1$ and $e_2$ of $L_2$ write $\tilde f=u\,e_1$, $\eta_j=v_j\,e_1^{-m_j}e_2^n$ and $\sigma=c\,e_2$. The defining ideal of $C(Y)$ is homogeneous, so equivariance allows us to choose polynomial representatives of the coordinates of $R$ that are homogeneous for the weighted action. For such representatives every monomial $w^\alpha\zeta^\beta t^{\nu-k}$ of $R^h$ satisfies $|\alpha|-\sum_jm_j|\beta_j|=d$ and $|\beta|+\nu-k=\nu$. Hence $R^h(u,v,c^n)\,e_1^de_2^{\nu n}$ does not depend on the frames, and it equals $\widetilde F$. (i) follows from the assumption and \cref{rem:spray-basic}(2). At points $x$ with $c(x)\ne0$, we have $R^h(u,v,c^n)=c^{\nu n}R(u,v/c^n)$ and $\eta/\sigma^n=(v/c^n)e_1^{-m}$, which gives the identity in (ii). For (iii), $f=[\tilde f]\colon U\to Y$ is holomorphic. Regarding $\tilde f$ as a section of $\mathrm{Hom}(L_1^{-1},\C^{N+1})$, contraction with $\tilde f$ gives a holomorphic vector bundle map $\iota_{\tilde f}\colon L_1^{-1}|_U\to U\times\C^{N+1}$, $\lambda\mapsto\langle\lambda,\tilde f\rangle$. It is injective on fibers since $\tilde f$ has no zeros on $U$, and its image is $f^*\cO_Y(-1)$. Hence $\iota_{\tilde f}\colon L_1^{-1}|_U\to f^*\cO_Y(-1)$ is an isomorphism, and $\kappa\mapsto\kappa\otimes\tilde f^{\otimes m}$ is the isomorphism $\bigoplus_j(\iota_{\tilde f}^{\otimes m_j})^{\oplus r_j}$. (iv) is clear.
\end{proof}

\section{A meromorphic Oka--Weil theorem with parameters}\label{sec:runge}

\begin{proposition}\label[proposition]{prop:runge-interp}
Let $X$ be a Stein space, $P_0\subset P$ compact Hausdorff spaces such that every point of $P$ has a contractible compact neighborhood, $E\to P\times X$ a continuous family of holomorphic vector bundles with a continuous Hermitian metric, $L\to X$ a holomorphic line bundle, $\sigma\in\GO(X,L)$ a section which vanishes identically on no irreducible component of $X$, and $\cI\subset\cO_X$ a coherent ideal sheaf. Assume that $K\subset X\setminus\{\sigma=0\}$ is a compact $\cO(X\setminus\{\sigma=0\})$-convex set, and that $f\in\GP(P\times\Op K,E)$ satisfies
\begin{itemize}
\item $f(p,\cdot)\in\Gamma(\Op K,\cI\cdot\cO(E_p))$ for all $p\in P$, and
\item $f(p,\cdot)=0$ for $p\in P_0$.
\end{itemize}
Then for every $\epsilon>0$ there are $n\in\Z_{\ge0}$ and $\eta\in\GP(P\times X,E\otimes\pr_X^*L^n)$ such that for all $p\in P$:
\begin{enumerate}[label=(\arabic*)]
\item $\eta(p,\cdot)\in\Gamma(X,\cI\cdot\cO(E_p\otimes L^n))$,
\item $\eta(p,\cdot)=0$ if $p\in P_0$,
\item $\sup_{P\times K}\|f-\eta/\sigma^n\|<\epsilon$.
\end{enumerate}
\end{proposition}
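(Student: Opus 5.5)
The plan is to reduce to the non-parametric case by a partition of unity in $p$, and to treat the non-parametric case by Oka--Weil on the Stein space $X':=X\setminus\{\sigma=0\}$ followed by clearing denominators.

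\emph{Step 1 (local triviality in $p$).} Cover $P$ by finitely many contractible compact neighborhoods $P_1,\dots,P_a$. By \cref{lem:loc-triv}, over each $P_i\times X$ the family $E$ is isomorphic to $\pr_X^*E_{p_i}$. Through this isomorphism, $f|_{P_i\times\Op K}$ becomes a continuous map $P_i\to\Gamma(\Op K,\cI\cdot\cO(E_{p_i}))$. Here we use that the isomorphism preserves the submodule $\cI\cdot\cO(E)$, since it is $\cO_X$-linear.

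\emph{Step 2 (approximating on $K$).} Since $K$ is $\cO(X')$-convex and $\cI\cdot\cO(E_{p_i})|_{X'}$ is coherent, Cartan's Theorem B together with the Oka--Weil theorem shows that $\Gamma(X',\cI\cdot\cO(E_{p_i}))$ is dense in $\Gamma(\Op K,\cI\cdot\cO(E_{p_i}))$ in the topology of uniform convergence on $K$. Compactness of $P_i$ lets us approximate the continuous map $p\mapsto f(p,\cdot)$ by a finite sum $\sum_\alpha \chi_\alpha(p)g_\alpha$. Here $\chi_\alpha$ is a partition of unity on $P_i$ and the $g_\alpha$ are values of $f$ at finitely many points, approximated by global sections on $X'$. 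We then glue over $i$ with a partition of unity $\{\psi_i\}$ on $P$, transporting back by the isomorphisms of Step 1. Because these isomorphisms are only defined over $P_i\times X$, the gluing is performed after clearing denominators in Step 3, where all data live on $P_i\times X$.

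\emph{Step 3 (clearing denominators).} This is the key analytic input: for a global section $g\in\Gamma(X',\cI\cdot\cO(F))$ of a coherent sheaf on $X$ and a compact $K\subset X'$, there are $n$ and $h\in\Gamma(X,\cI\cdot\cO(F\otimes L^n))$ with $h/\sigma^n$ close to $g$ on $K$. I would prove this as in \cite[Lemmas 3.1 and 3.2]{Ab09}. Choose a compact $\cO(X)$-convex set $K_1\supset K$, and note that $\sigma^n g$ extends across $\{\sigma=0\}\cap K_1$ for large $n$, using local Noetherian arguments (Riemann extension for the coherent sheaf $\cI\cdot\cO(F)$, which needs $\sigma$ not vanishing identically on any component). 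Then approximate $\sigma^ng$ on $K_1$ by global sections using Oka--Weil on $X$. Since we need one $n$ uniformly over finitely many $g_\alpha$, we take the maximum, multiplying by powers of $\sigma$. The resulting $\eta=\sum_i\psi_i(p)\sum_\alpha\chi_\alpha(p)h_{i,\alpha}$ is continuous in $p$, holomorphic in $x$, and lies in $\cI\cdot\cO(E_p\otimes L^n)$, giving (1) and (3).

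\emph{Step 4 (vanishing on $P_0$).} Property (2) is arranged by choosing the finitely many sample points with care. We shrink the partition-of-unity functions so that the weights attached to samples $p_\alpha\notin P_0$ are small on $P_0$. Alternatively, we replace $\eta$ by $\eta-\eta_0$, where $\eta_0$ is a continuous extension to $P$ of $\eta|_{P_0}$ that is small on $K$. Such an extension exists because $\eta|_{P_0}$ is uniformly small on $P_0\times K$ (as $f=0$ there), and we can use a Tietze-type extension into the Fréchet space of global sections, multiplied by a cutoff in $p$ to keep it small on $K$. I expect the main obstacle to be precisely this compatibility. Tietze extension into Fréchet spaces (Dugundji) gives continuity, but keeping smallness on $K$ requires performing the extension on the finite-dimensional span of finitely many global sections, which is what the finite-sum construction above provides.
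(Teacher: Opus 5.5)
There is a genuine gap in your Step~3, which is where the analytic content of the proposition lies. You claim that $\sigma^n g$ extends across $\{\sigma=0\}\cap K_1$ for large $n$. This is false for a general $g\in\Gamma(X',\cI\cdot\cO(F))$, because a holomorphic section on $X'=X\setminus\{\sigma=0\}$ can have an essential singularity along $\{\sigma=0\}$. For instance, take $X=\C$, $L=\cO_{\C}$, $\sigma(z)=z$, $\cI=\cO_{\C}$ and $g(z)=e^{1/z}$. Then $z^ng$ extends across $0$ for no $n$. Riemann extension needs local boundedness, and Noetherian arguments apply only to sections that are already meromorphic along $\{\sigma=0\}$. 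The sections $g_\alpha$ that Oka--Weil on $X'$ produces in your Step~2 have no such property.

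The statement you want is still true (in the example, truncate the Laurent series), but it is an \emph{approximation} statement, not an extension statement. As written, your argument gives no way to pass from sections over $X'$ to sections of $\cI\cdot\cO(F\otimes L^n)$ over $X$.

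The paper closes this by reducing to scalar coefficients before leaving $\Op K$:
\begin{enumerate}[(a)]
\item Shrink $\Op K$ to a Stein neighborhood, which is possible since $K$ is $\cO(X')$-convex.
\item Use Theorem~A on $X$ and compactness to find $g_1,\dots,g_q\in\Gamma(X,\cI\cdot\cO(E))$ generating $\cI\cdot\cO(E)$ on $\Op K$.
\item Write $f=\sum_j\xi_jg_j$ with $\xi_j\in\cO(\Op K)$ by Cartan's division theorem.
\item Approximate only the scalar functions $\xi_j$ on $K$: first by $\xi_j'\in\cO(X')$ via Oka--Weil on the Stein space $X'$, then by $u_j/\sigma^n$ with $u_j\in\GO(X,L^n)$ via \cite[Lemma 3.1]{Ab09}.
\end{enumerate}
One way to see the last approximation: $X'$ is the closed subvariety $\{\langle\sigma,\lambda\rangle=1\}$ of the Stein total space of $L^{-1}$. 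Extend a function from it by Theorem~B and truncate its fiberwise power series, whose degree-$k$ coefficients lie in $\GO(X,L^k)$. Then $\eta:=\sum_ju_jg_j$ lies in $\Gamma(X,\cI\cdot\cO(E\otimes L^n))$ automatically, because the $g_j$ are global sections of the ideal-twisted sheaf.

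Your Step~4 is also more complicated than it needs to be. Your first option gives only smallness on $P_0$, not vanishing. Your second option, with $\eta_0:=\chi\eta'$ for a cutoff $\chi\colon P\to[0,1]$ equal to $1$ on $P_0$ and supported where $f$ is uniformly small on $K$, is exactly the paper's $\eta:=(1-\chi)\eta'$. It needs no Tietze or Dugundji extension.
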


\begin{proof}
It suffices to show that $f$ can be approximated uniformly on $P\times K$, as closely as we like, by $\eta/\sigma^n$ with $\eta$ satisfying (1) and (2).

\emph{Step 1.} Assume that $P$ is a singleton, and write $f$ for $f(p,\cdot)$. Then $f$ is a section of the coherent sheaf $\cF:=\cI\cdot\cO(E)$ over $\Op K$. Since $X\setminus\{\sigma=0\}$ is Stein \cite[Lemma 3.3]{Ab09} and $K$ is $\cO(X\setminus\{\sigma=0\})$-convex, we may assume, shrinking $\Op K$, that $\Op K$ is Stein. By Cartan's Theorem A \cite[Theorem 2.6.1]{F17} and compactness there are $g_1,\dots,g_q\in\Gamma(X,\cF)$ generating the stalks of $\cF$ at all points of a neighborhood of $K$, and shrinking $\Op K$ again we may assume that they generate $\cF$ at all points of $\Op K$. By Cartan's division theorem \cite[Corollary 2.6.4]{F17}, $f=\sum_j\xi_jg_j$ with $\xi_j\in\cO(\Op K)$. The Oka--Weil theorem \cite[Theorem 2.3.1]{F17} on $X\setminus\{\sigma=0\}$ gives $\xi'_j\in\cO(X\setminus\{\sigma=0\})$ approximating $\xi_j$ uniformly on $K$. By \cite[Lemma 3.1]{Ab09}, each $\xi'_j$ is in turn approximated uniformly on $K$ by $u_j/\sigma^n$ with $u_j\in\GO(X,L^n)$. A common $n$ is obtained by multiplying by powers of $\sigma$. Then $\eta:=\sum_ju_jg_j$ satisfies (1), and $\eta/\sigma^n=\sum_j(u_j/\sigma^n)g_j$ approximates $f$ uniformly on $K$ as closely as we like.

\emph{Step 2: constant bundles.} Suppose $E=\pr_X^*E_{p'}$. Since $f$ is continuous and $K$ is compact, there are finitely many $p_j\in P$ and a partition of unity $(\psi_j)$ on $P$ such that $f(p,\cdot)$ is uniformly close to $f(p_j,\cdot)$ on $K$ whenever $\psi_j(p)\ne0$. Apply Step 1 to each $f(p_j,\cdot)$, with a common exponent $n$, and let $\eta_j$ be the resulting sections. Then $\eta'(p,\cdot):=\sum_j\psi_j(p)\eta_j$ satisfies (1), and $\eta'(p,\cdot)/\sigma^n$, being a convex combination of approximations, is uniformly close to $f(p,\cdot)$ on $K$ for every $p\in P$. Since $f(p,\cdot)=0$ for $p\in P_0$, there is a neighborhood $W$ of $P_0$ such that $f(p,\cdot)$ is uniformly small on $K$ for $p\in W$. Choose a continuous function $\chi\colon P\to[0,1]$ equal to $1$ on $P_0$ with support in $W$, and put $\eta:=(1-\chi)\eta'$. Then $\eta$ satisfies (1) and (2), and
\[
f(p,\cdot)-\eta(p,\cdot)/\sigma^n=(1-\chi(p))\bigl(f(p,\cdot)-\eta'(p,\cdot)/\sigma^n\bigr)+\chi(p)f(p,\cdot)
\]
is uniformly small on $K$ for every $p\in P$.

\emph{Step 3: localization in $P$.} By \cref{lem:loc-triv}, finitely many contractible compact neighborhoods $P_i$ have interiors covering $P$ and admit isomorphisms $\iota_i\colon\pr_X^*E_{q_i}\to E$ over $P_i\times X$. Choose a partition of unity $(\varphi_i)$ with $\supp\varphi_i\subset P_i^\circ$. Apply Step 2 to $\iota_i^{-1}f$ over $P_i$, relative to $P_0\cap P_i$, with a common exponent $n$, and let $\eta_i$ be the resulting sections. Then $\eta:=\sum_i\varphi_i\,\iota_i(\eta_i)$ satisfies (1) and (2), and
\[
f(p,\cdot)-\eta(p,\cdot)/\sigma^n=\sum_i\varphi_i(p)\,\iota_i\bigl(\iota_i^{-1}f(p,\cdot)-\eta_i(p,\cdot)/\sigma^n\bigr).
\]
Since the isomorphisms $\iota_i$ are uniformly bounded on $P_i\times K$, this is uniformly small on $K$ for every $p\in P$ if the approximations in Step 2 are sufficiently close.
\end{proof}

The local contractibility assumption on $P$ enters only in Step 3 through \cref{lem:loc-triv}. For bundles pulled back from $X$, Step 2 works for arbitrary compact Hausdorff parameter pairs.

\section{Lifting homotopies through rational sprays}\label{sec:lifting}

\begin{setting}\label[setting]{set:spray}
In this section $Y\subset\PP^N$ is a smooth projective variety, $R$ is a conic lift of degree $d$ of a rational spray $s$ on $E=\cO_Y(m;r)$ (\cref{def:conic-lift}), $Z\subsetneq Y$ is a Zariski closed set containing $Z(R)$, and $\Omega$, $E^{(l)}$, $\Omega^{(l)}$, $s^{(l)}$ are as in \cref{rem:spray-basic} and \cref{lem:iteration}. In their definitions we replace $Z(R)$ by $Z$. This restricts the domains so that every intermediate value lies in $Y\setminus Z$, and the same proofs apply. Thus $s\colon\Omega\to Y\setminus Z$ is a holomorphic local dominating spray on the open neighborhood $\Omega$ of the zero section of $E|_{Y\setminus Z}$. We fix Hermitian metrics on $E|_{Y\setminus Z}$ and $T(Y\setminus Z)$. For a continuous family of holomorphic line bundles $L$ over $P\times X$ we put $E^{(l)}(L):=\bigoplus_{i=1}^l\bigoplus_j(L^{-d^{i-1}m_j})^{\oplus r_j}$. For $\kappa\in E^{(l)}(L)_{(p,x)}$ and $\tilde f\in L_{(p,x)}\otimes\C^{N+1}$ with $\tilde f\ne0$ and $[\tilde f]\in Y$ we write $\kappa\otimes\tilde f^{\otimes m^{(l)}}\in E^{(l)}_{[\tilde f]}$ for the point defined as in \cref{sec:sprays} with the weights $m^{(l)}$.
\end{setting}

\begin{lemma}\label[lemma]{lem:linearization}
Let $X$ be a Stein space, $K\subset X$ a compact set, $\Lambda$ a compact Hausdorff space and $\varphi\colon\Lambda\times X\to Y\setminus Z$ a continuous map such that $\varphi(\lambda,\cdot)$ is holomorphic for every $\lambda\in\Lambda$. Then there is $\delta>0$ with the following property. Let $P'$ be a compact Hausdorff space, $\lambda\colon P'\to\Lambda$ a continuous map and $g\colon P'\times\Op K\to Y\setminus Z$ a continuous map such that $g(p,\cdot)$ is holomorphic and $\sup_K\dist(g(p,\cdot),\varphi(\lambda(p),\cdot))<\delta$ for every $p\in P'$. Then there is a continuous map $\xi\colon P'\times\Op K\to E$ with $\xi(p,\cdot)\in\GO(\Op K,\varphi(\lambda(p),\cdot)^*E)$ and $t\xi(p,x)\in\Omega$ for $t\in[0,1]$, such that
\begin{enumerate}[(a)]
\item $s\circ\xi(p,\cdot)=g(p,\cdot)$ on $\Op K$,
\item $\xi(p,x)=0$ whenever $g(p,x)=\varphi(\lambda(p),x)$,
\item if $X_0\subset X$ is a closed analytic subvariety with ideal sheaf $\cI_{X_0}$ and $g(p,\cdot)$ and $\varphi(\lambda(p),\cdot)$ agree to order $\ell$ along $X_0$, then $\xi(p,\cdot)\in\Gamma(\Op K,\cI_{X_0}^{\ell}\cdot\cO(\varphi(\lambda(p),\cdot)^*E))$.
\end{enumerate}
\end{lemma}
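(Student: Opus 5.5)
The plan is to linearize the local dominating spray $s\colon\Omega\to Y\setminus Z$ along the family $\varphi$. The standard tool is a holomorphic splitting of the vertical kernel of the spray. Over $Y\setminus Z$ the differential $ds_{0_y}\colon E_y\to T_yY$ is surjective, so its kernel $\ker(ds_0)$ is a vector subbundle of $E|_{Y\setminus Z}$.

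\emph{Splitting.} Pull back by $\varphi$, giving a continuous family over $\Lambda\times X$ of holomorphic bundles
\[
\varphi^*E=\varphi^*\ker(ds_0)\oplus\varphi^*(\ker ds_0)^\perp.
\]
The orthogonal complement is taken for the fixed metric and is only continuous. The key step is to find a holomorphic (in $x$, continuous in $\lambda$) complement $H\subset\varphi^*E$ on $\Op K$. For this I would use that the surjection $\varphi^*E\to\varphi^*TY$ splits holomorphically on the Stein space $X$, with continuous dependence on $\lambda$. Over a Stein neighborhood of $K$, the obstruction lies in $H^1$ of a coherent sheaf, which vanishes, and a parametric version over the compact space $\Lambda$ follows as in \cref{prop:runge-interp} Step 3 and \cref{lem:loc-triv}. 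Alternatively one can solve the splitting problem in the Banach-valued setting of \cite{Bu68}, viewing sections as $C(\Lambda,\cdot)$-valued holomorphic maps. Let $\theta\colon\varphi^*TY\to\varphi^*E$ be the resulting holomorphic right inverse of $ds_0$.

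\emph{Implicit function theorem.} Consider the fiberwise map
\[
\Phi_{\lambda,x}\colon v\mapsto s(\theta_{\lambda,x}(v)),
\]
defined near $0$ in $T_{\varphi(\lambda,x)}Y$. It has differential $\mathrm{id}$ at $0$, so by the holomorphic inverse function theorem with parameters it is a biholomorphism from a ball of radius $\rho$ onto a neighborhood of $\varphi(\lambda,x)$. That neighborhood contains the $\delta$-ball in $Y$, and $t\theta(v)\in\Omega$ for $t\in[0,1]$. By compactness of $\Lambda\times\overline{\Op K}$, the radii $\rho$ and $\delta$ can be chosen uniformly. Then set
\[
\xi(p,x):=\theta_{\lambda(p),x}\bigl(\Phi_{\lambda(p),x}^{-1}(g(p,x))\bigr)
\]
on a neighborhood of $K$ on which the $\delta$-closeness persists; shrinking $\Op K$ via continuity and compactness of $P'$ gives this neighborhood. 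This $\xi$ is continuous, holomorphic in $x$, and satisfies (a). Property (b) holds because $\Phi^{-1}(\varphi)=0$.

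\emph{Jets.} For (c), $\Phi^{-1}$ is a fiberwise biholomorphism depending holomorphically on $x$. Composing in local coordinates, $\Phi^{-1}(g)-\Phi^{-1}(\varphi)$ has components in $\cI_{X_0}^\ell$, by the standard facts on agreement to order $\ell$ recalled in \cref{sec:prelim} (the difference of the two arguments lies in $\cI_{X_0}^\ell$ and the map is Lipschitz-holomorphic). Applying the linear map $\theta$ keeps us in $\cI_{X_0}^\ell\cdot\cO(\varphi^*E)$.

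The main obstacle is obtaining the holomorphic splitting $\theta$ uniformly in $\lambda\in\Lambda$. I would first try the Banach-bundle Oka--Grauert framework of \cite{Bu68} already used in \cref{lem:loc-triv}, where the vanishing of $H^1$ for Banach-valued coherent data on Stein spaces yields the splitting directly.
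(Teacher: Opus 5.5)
Your skeleton matches the paper's proof: a right inverse $\theta$ of $\beta_y=ds_{0_y}|_{E_y}$ along $\varphi$, the parametric inverse function theorem for $v\mapsto s(\theta(v))$, and $\xi=\theta(\Phi^{-1}(g))$, with (b) and (c) read off from $\Phi^{-1}(\varphi)=0$. However, the step you flag as the main obstacle is left open, and in the generality of the lemma the tools you name cannot close it.

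Here $\Lambda$ is an arbitrary compact Hausdorff space, while \cref{lem:loc-triv} needs every point to have a contractible compact neighborhood. Without such a trivialization you cannot compare $\varphi(\lambda,\cdot)^*E$ with $\varphi(\lambda_0,\cdot)^*E$ over $\Op K$. So you cannot transport a right inverse obtained at $\lambda_0$ from Cartan's Theorem B to nearby $\lambda$. The Bungart alternative has the same defect: regarding sections as $C(\Lambda,\cdot)$-valued holomorphic maps presupposes frames over $\Lambda\times U$, and contractibility is exactly what provides them in the proof of \cref{lem:loc-triv}. Even for locally contractible $\Lambda$ (as in the applications, $\Lambda=[0,1]\times P$ or $P$), the pattern of Step 3 of \cref{prop:runge-interp} does not by itself produce the splitting. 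You would still need a perturbation step, such as $\theta_i\,(\beta(\lambda)\theta_i)^{-1}$ for $\lambda$ near $\lambda_i$ on a neighborhood of $\overline{\Op K}$, followed by gluing.

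The missing idea is to avoid pulled-back bundles and work on $X\times(Y\setminus Z)$. For $\lambda_0\in\Lambda$, the graph of $\varphi(\lambda_0,\cdot)$ is a closed Stein subvariety of $X\times(Y\setminus Z)$, so it has a Stein neighborhood $W$ \cite[Theorem 3.1.1]{F17}. On $W$ the surjection $\pr_Y^*\beta\colon\pr_Y^*E\to\pr_Y^*T(Y\setminus Z)$ has a holomorphic right inverse \cite[Corollary 2.6.5]{F17}. After shrinking $\Op K$ to have compact closure, the graph of $\varphi(\lambda,\cdot)|_{\overline{\Op K}}$ lies in $W$ for all $\lambda$ near $\lambda_0$. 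Hence this single bundle map restricts to a right inverse along $\varphi(\lambda,\cdot)$ for all such $\lambda$, with no identification of bundles for different parameters. Finitely many such neighborhoods cover $\Lambda$. Since right inverses of a fixed surjection form an affine space, a partition of unity on $\Lambda$ glues them into a right inverse $\mathcal R$ that is continuous in $\lambda$ and holomorphic in $x$; this uses only that $\Lambda$ is compact Hausdorff. With $\mathcal R$ in place of your $\theta$, the rest of your argument goes through as written.
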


\begin{proof}
Write $\beta_y=ds_{0_y}|_{E_y}\colon E_y\to T_y(Y\setminus Z)$. The graph of each $\varphi(\lambda,\cdot)$ is Stein and has a Stein neighborhood \cite[Theorem 3.1.1]{F17}. On it the surjective bundle map $\beta$ has a holomorphic right inverse \cite[Corollary 2.6.5]{F17}. After shrinking $\Op K$ to have compact closure, compactness of $\Lambda$ gives finitely many such inverses valid for nearby parameters. A partition of unity in $\lambda$ combines them into a continuous right inverse
\[
\mathcal R\colon\varphi^*T(Y\setminus Z)\longrightarrow\varphi^*E,
\]
holomorphic in $x$, with $\beta\circ\mathcal R=\id$.

Set $\Psi=s\circ\mathcal R$. Then $\Psi(\lambda,x,0)=\varphi(\lambda,x)$ and $d_v\Psi(\lambda,x,0)=\id$. The inverse function theorem with continuous parameters \cite[proof of Proposition 6.5.1]{F17}, uniformly on $\Lambda\times\overline{\Op K}$, gives a continuous inverse $\Theta(\lambda,x,y)$, holomorphic in $(x,y)$, defined whenever $\dist(y,\varphi(\lambda,x))<\delta$ for some $\delta>0$. It satisfies
\[
\Psi(\lambda,x,\Theta(\lambda,x,y))=y,\qquad
\Theta(\lambda,x,\varphi(\lambda,x))=0.
\]
We choose its values in a small fiber ball whose image under $\mathcal R$ lies in $\Omega$.

For $P'$, $\lambda$ and $g$ as in the statement, the distance bound extends to $P'\times\Op K$ after shrinking. Set
\[
\xi(p,x)=\mathcal R\bigl(\lambda(p),x,\Theta(\lambda(p),x,g(p,x))\bigr).
\]
Then $s\circ\xi(p,\cdot)=g(p,\cdot)$ and $t\xi(p,\cdot)\in\Omega$ by the fiber-ball choice. The identity for $\Theta$ gives (b). If $g(p,\cdot)$ and $\varphi(\lambda(p),\cdot)$ agree to order $\ell$ along $X_0$, then so do $x\mapsto\Theta(\lambda(p),x,g(p,x))$ and $x\mapsto\Theta(\lambda(p),x,\varphi(\lambda(p),x))=0$, which gives (c).
\end{proof}

\begin{lemma}\label[lemma]{lem:lifting}
In \cref{set:spray}, let $X$ be a Stein space, $K\subset X$ a compact set, $P_0\subset P$ compact Hausdorff spaces, $X_0\subset X$ a closed analytic subvariety, $L\to P\times X$ a continuous family of holomorphic line bundles and $\tilde f_0\in\GP(P\times X,L)^{\oplus(N+1)}$ such that $\tilde f_0(p,x)\ne0$ and $[\tilde f_0(p,x)]\in Y$ for all $(p,x)\in P\times X$. Let $f_t\colon P\times X\to Y\setminus Z$ \textup{(}$t\in[0,1]$\textup{)} be a homotopy of continuous families of holomorphic maps such that $f_0=[\tilde f_0]$, $f_t(p,\cdot)=f_0(p,\cdot)$ for $p\in P_0$, and $f_t(p,\cdot)$ agrees with $f_0(p,\cdot)$ to order $\ell$ along $X_0$ for all $t$ and $p$. Then there are $l\in\N$ and a homotopy $h_t\in\GP(P\times\Op K,E^{(l)}(L))$ \textup{(}$t\in[0,1]$\textup{)} such that
\begin{enumerate}[(i)]
\item $h_0=0$, $h_t(p,\cdot)=0$ for $p\in P_0$, and $h_t(p,\cdot)\in\Gamma(\Op K,\cI_{X_0}^{\ell}\cdot\cO(E^{(l)}(L)_p))$, where $\cI_{X_0}$ is the ideal sheaf of $X_0$,
\item $h_t(p,x)\otimes\tilde f_0(p,x)^{\otimes m^{(l)}}\in\Omega^{(l)}$ and $s^{(l)}\bigl(h_t(p,x)\otimes\tilde f_0(p,x)^{\otimes m^{(l)}}\bigr)=f_t(p,x)$ for $(p,x)\in P\times\Op K$.
\end{enumerate}
\end{lemma}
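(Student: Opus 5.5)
The plan is to subdivide the time interval finely and lift each small step through the spray $s$ with \cref{lem:linearization}. The successive lifts are then concatenated using the iterated sprays $s^{(l)}$ of \cref{lem:iteration}. Since $[0,1]\times P$ is compact and $f$ is continuous, we first fix a compact neighborhood $K'$ of $K$ on which everything is defined. We apply \cref{lem:linearization} with $\Lambda:=[0,1]\times P$ and $\varphi(t,p,\cdot):=f_t(p,\cdot)$, on a Stein neighborhood of $K'$, to get $\delta>0$. By uniform continuity we then choose $0=t_0<t_1<\dots<t_l=1$ so that $\sup_{K'}\dist(f_t(p,\cdot),f_{t_{i-1}}(p,\cdot))<\delta$ for $t\in[t_{i-1},t_i]$ and all $p$.

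Step $i$ uses the parameter space $P'_i:=[t_{i-1},t_i]\times P$, the map $\lambda(t,p):=(t_{i-1},p)$ and $g(t,p,\cdot):=f_t(p,\cdot)$. \cref{lem:linearization} gives $\xi_i(t,p,\cdot)\in\GO(\Op K,f_{t_{i-1}}(p,\cdot)^*E)$ with $s\circ\xi_i=f_t$ and $t\xi_i\in\Omega$. Moreover $\xi_i(t_{i-1},p,\cdot)=0$, and $\xi_i(t,p,\cdot)=0$ for $p\in P_0$ by (b), since $f_t=f_{t_{i-1}}$ there. Since $f_t$ and $f_{t_{i-1}}$ agree to order $\ell$ along $X_0$ (transitivity), (c) puts $\xi_i$ in $\cI_{X_0}^\ell$. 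We extend $\xi_i$ by its value at $t_i$ for $t\ge t_i$ and by $0$ for $t\le t_{i-1}$. This keeps continuity, because $\xi_i$ vanishes at $t_{i-1}$.

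Next we convert each $\xi_i$ into a section of $(L^{-d^{i-1}m_j})^{\oplus r_j}$ via \cref{prop:substitution}(iii). By induction, the tuple $\tilde f^{(i-1)}_t:=R^{(i-1)}(\tilde f_0;\text{earlier pieces})$ is a nonvanishing section of $L^{d^{i-1}}\otimes\C^{N+1}$ with $[\tilde f^{(i-1)}_t]=f_{\min(t,t_{i-1})}$; it is nonvanishing because the defining conditions of $\Omega^{(i-1)}$ hold. Near $K$, the map $\kappa\mapsto\kappa\otimes(\tilde f^{(i-1)}_t)^{\otimes m}$ is then an isomorphism $\bigoplus_j(L^{-d^{i-1}m_j})^{\oplus r_j}\to f_{\min(t,t_{i-1})}^*E$. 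We take $\kappa_i(t,\cdot)$ to be the preimage of $\xi_i(t,\cdot)$. There is a mismatch for $t<t_{i-1}$: the base point is then $f_t$ and not $f_{t_{i-1}}$. This is harmless, because $\xi_i=0$ there, so $\kappa_i=0$ too. For $t\ge t_{i-1}$ the base is exactly $f_{t_{i-1}}$.

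We set $h_t:=(\kappa_1(t),\dots,\kappa_l(t))$. Then $h_0=0$, and $h_t=0$ on $P_0$. The ideal condition is preserved, since the isomorphism is $\cO$-linear and thus maps $\cI_{X_0}^\ell\cdot\cO$ to $\cI_{X_0}^\ell\cdot\cO$. For (ii) we argue by induction with \eqref{eq:iterate}. Each intermediate $R^{(i)}$ is nonzero with class $f_{\min(t,t_i)}\in Y\setminus Z$, since $\xi_i(t)\in\Omega$ and $s(\xi_i)=f$; this gives membership in $\Omega^{(l)}$ and $s^{(l)}(\dots)=f_t$.

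The main obstacle is the bookkeeping in the concatenation: the fiber of $E$ used at step $i$ must be identified, through the cone representative $R^{(i-1)}$ rather than through $f_{t_{i-1}}$ alone, with the twisted bundle $L^{-d^{i-1}m}$. This identification has to be jointly continuous in $(t,p)$. It is only legitimate because $\xi_i$ vanishes on the range of $t$ where the base points differ.
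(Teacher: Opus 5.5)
Your proposal is correct and follows essentially the same route as the paper. The paper likewise applies \cref{lem:linearization} with $\Lambda=[0,1]\times P$ on a fine partition of $[0,1]$, rewrites each $\xi$ as a block of $E^{(l)}(L)$ via \cref{prop:substitution}(iii) using the cone representative $R^{(j)}(\tilde f_0;h^{(j)}(t_j))$, freezes blocks outside their time intervals via $\min\{t,t_j\}$ and $\max\{t,t_j\}$, and verifies (ii) through \eqref{eq:iterate} together with $R_0(w)\in\C^*w$ over $Y\setminus Z$. Your time-dependent tuple $\tilde f^{(i-1)}_t$ and the ``mismatch'' for $t<t_{i-1}$ are harmless, as you note, but using the fixed endpoint tuple and $\kappa_{j+1}(\max\{t,t_j\})$ removes that step.
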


\begin{proof}
Following \cite[Proposition 6.5.1]{F17} and \cite[Lemma 2.6]{BK23}, apply \cref{lem:linearization} with $\Lambda=[0,1]\times P$ and $\varphi(t,p,\cdot)=f_t(p,\cdot)$. Compactness gives a partition $0=t_0<\cdots<t_l=1$ such that, uniformly in $p$, $f_t(p,\cdot)$ is so close to $f_{t_j}(p,\cdot)$ on $K$ for $t_j\le t\le t_{j+1}$ that \cref{lem:linearization} applies.

Starting with $h^{(0)}=0$ and $R^{(0)}=\id$, suppose that a lift $h^{(j)}$ on $[0,t_j]$ has been constructed. Its endpoint gives the tuple without zeros
\[
\tilde f_{t_j}(p,\cdot)=R^{(j)}\bigl(\tilde f_0(p,\cdot);h^{(j)}(t_j,p,\cdot)\bigr)
\in\GP(P\times\Op K,L^{d^j})^{\oplus(N+1)}.
\]
Linearization on $[t_j,t_{j+1}]$ gives $\xi_t(p,\cdot)\in\GO(\Op K,f_{t_j}(p,\cdot)^*E)$ with $s\circ\xi_t(p,\cdot)=f_t(p,\cdot)$ and values in $\Omega$. It vanishes at $t_j$, on $P_0$ and to order $\ell$ along $X_0$. By \cref{prop:substitution}(iii),
\[
\xi_t(p,\cdot)=\kappa_{j+1}(t,p,\cdot)\otimes\tilde f_{t_j}(p,\cdot)^{\otimes m},
\]
where $\kappa_{j+1}$ is a section of the block $\bigoplus_\mu(L^{-d^jm_\mu})^{\oplus r_\mu}$ with the same vanishing properties. Define
\[
h^{(j+1)}(t,p,\cdot)=\bigl(h^{(j)}(\min\{t,t_j\},p,\cdot),\,
\kappa_{j+1}(\max\{t,t_j\},p,\cdot)\bigr).
\]
This is continuous and satisfies (i). For $t\le t_j$, its last block is zero and $R_0(w)\in\C^*w$ over $Y\setminus Z$, so appending that block leaves the projectivized image unchanged. For $t\ge t_j$, \eqref{eq:iterate} identifies the last step with $s\circ\xi_t(p,\cdot)=f_t(p,\cdot)$, and all intermediate values remain in $Y\setminus Z$. Hence (ii) holds. Then $h_t:=h^{(l)}(t,\cdot,\cdot)$ has the required properties.
\end{proof}

\section{Proof of the main theorems}\label{sec:proof}

\subsection{The version with conic lifts}
\begin{theorem}\label[theorem]{thm:lift}
Let $X$, $G$, $K$, $X_0$, $\ell$ and $P_0\subset P$ be as in \cref{thm:main2}. Let $Y\subset\PP^N$ be a smooth projective variety and $s\colon\cO_Y(m;r)\merto Y$ a rational spray with a conic lift $R\colon C(Y)\times\C^{|r|}\to C(Y)$ of degree $d$. Assume $Z(R)\ne Y$ and put $Z:=Z(R)$. Let $f_1\colon P\times\Op K\to Y\setminus Z$ be a continuous family of holomorphic maps satisfying $(\mathrm H_Z)$, and assume moreover
\begin{enumerate}
\item[(C)] $R_0(\tilde f_0(p,\cdot))\not\equiv0$ on every irreducible component of $X$ for every $p\in P$.
\end{enumerate}
Then there are $L$, $\sigma$ as in \cref{thm:main2}\textup{(2)} and $l\in\N$ such that conclusions \textup{(i)--(vii)} hold with the following replacements. In \textup{(v)},
\[
L(F)=L(f_0)^{d^l}\otimes\pr_X^*L^k.
\]
In \textup{(vi)} and \textup{(vii)}, replace $\tilde f_0$ by the section $R^{(l)}_0(\tilde f_0)$ of $L(f_0)^{d^l}\otimes\C^{N+1}$.
\end{theorem}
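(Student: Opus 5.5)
The proof follows the four-step outline stated after \cref{thm:general}: lift the homotopy, approximate the lift, substitute back, and read off the conclusions.

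\emph{Setup.} First apply \cref{lem:hull} with a Stein neighborhood $V$ of $K$ on which $f_t$ and $\tilde f_0$ are defined and $\tilde f_0$ has no zeros. This gives $L\in G$ and $\sigma\in\GO(X,L)$ with $\{\sigma=0\}\cap K=\emptyset$ and $\Khat_{X\setminus\{\sigma=0\}}\subset V$, vanishing identically on no irreducible component of $X$ or $X_0$. Next replace $K$ by a compact $\cO(X\setminus\{\sigma=0\})$-convex neighborhood $K'\subset V$ of $\Khat_{X\setminus\{\sigma=0\}}$. On $\Op K'$ the homotopy $f_t$ of $(\mathrm H_Z)$ takes values in $Y\setminus Z$, and $\tilde f_0$ has no zeros there.

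\emph{Lift.} Apply \cref{lem:lifting} on a Stein neighborhood $W$ of $K'$ in place of $X$, using $L(f_0)|_{P\times W}$ and $\tilde f_0$. This yields $l\in\N$ and a homotopy $h_t\in\GP(P\times\Op K',E^{(l)}(L(f_0)))$ with $h_0=0$, vanishing for $p\in P_0$, lying in $\cI_{X_0}^\ell$, and satisfying $s^{(l)}(h_t\otimes\tilde f_0^{\otimes m^{(l)}})=f_t$. Take the endpoint $h_1$, which lifts $f_1$. Apply \cref{prop:runge-interp} to $h_1$ with $E=E^{(l)}(L(f_0))$, the section $\sigma$, and $\cI=\cI_{X_0}^\ell$. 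This gives $n$ and $\eta$, vanishing on $P_0$ and in $\cI^\ell_{X_0}$, with $\eta/\sigma^n$ uniformly close to $h_1$ on $P\times K'$.

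\emph{Substitute.} Define $\widetilde F:=(R^{(l)})^h(\tilde f_0,\eta,\sigma^n)$ as in \cref{prop:substitution}, using the conic lift $R^{(l)}$ of degree $d^l$ from \cref{lem:iteration} and setting $k:=\nu^{(l)}n$. This is a section of $L(f_0)^{d^l}\otimes\pr_X^*L^k$, which gives (v). For $p\in P_0$ we have $\eta=0$, so $\widetilde F=\sigma^kR^{(l)}_0(\tilde f_0)$, which gives (vi). In general $\widetilde F-\sigma^kR^{(l)}_0(\tilde f_0)$ consists of terms of positive degree in $\eta$, hence lies in $\cI_{X_0}^\ell$, which gives (vii). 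Condition (C), together with the fact that $\sigma$ vanishes identically on no component, ensures that $\widetilde F(p,\cdot)\not\equiv0$ on each component. So $F=[\widetilde F]$ is a continuous family of meromorphic maps by \cref{prop:substitution}(i),(iv).

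\emph{Homotopy $\hat f_t$.} On $\Op K$, define $\hat f_t:=s^{(l)}\bigl(((1-t)\eta/\sigma^n+th_1)\otimes\tilde f_0^{\otimes m^{(l)}}\bigr)$. Since $\Omega^{(l)}$ is open and $h_1$ takes values in it, the convex combinations stay in $\Omega^{(l)}$ over $P\times\Op K$ once the approximation is close enough, and then $\widetilde F$ has no zeros there by \cref{prop:substitution}(ii), giving (i). At $t=0$ the map $\hat f_0$ equals $F$, and at $t=1$ it equals $f_1$. Interpolation (ii) and (iii) holds because both $\eta$ and $h_1$ vanish on $P_0$ and lie in $\cI_{X_0}^\ell$, and composition with $s^{(l)}$ preserves order-$\ell$ agreement. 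Estimate (iv) follows from uniform continuity of $s^{(l)}$ on a compact part of $\Omega^{(l)}$.

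\emph{Main obstacle.} The hardest step is to make this uniformity and the inclusion in $\Omega^{(l)}$ hold on an honest neighborhood of $K$, not just on $K$ itself. Working on $K'$ is meant to secure this. A second concern is the order-$\ell$ jet statement (iii) when $X_0$ is singular: there I will use the algebraic congruence $s^{(l)}(a)-s^{(l)}(b)\in\cI^\ell$ whenever $a-b\in\cI^\ell$, in local coordinates.
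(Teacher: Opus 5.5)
Your outline follows the paper's route closely: $\sigma$ from \cref{lem:hull}, lifting by \cref{lem:lifting} on a Stein neighborhood of the hull, approximation by \cref{prop:runge-interp}, substitution into $(R^{(l)})^h$, and the straight-line homotopy $\hat f_t$. Working on a compact $\cO(X\setminus\{\sigma=0\})$-convex neighborhood $K'$ of $\Khat_{X\setminus\{\sigma=0\}}$ is a harmless variant. The gap is in the step where you conclude that $F=[\widetilde F]$ is a continuous family of meromorphic maps.

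You claim that (C) together with $\sigma\not\equiv0$ gives $\widetilde F(p,\cdot)\not\equiv0$ on every irreducible component of $X$. But (C) only controls the degree-zero term $\sigma^kR^{(l)}_0(\tilde f_0)$ of
\[
\widetilde F=\sum_j\sigma^{n(\nu_l-j)}R^{(l)}_j(\tilde f_0,\eta),\qquad \nu_l=\deg_\zeta R^{(l)}.
\]
Components meeting $K'$ are fine, since $\widetilde F$ has no zeros near $K'$. On a component $X'$ disjoint from $K'$, however, \cref{prop:runge-interp} gives no control on $\eta$. For a genuinely rational spray, $R$ (hence $R^{(l)}$) can vanish at points of $\widehat Y\times\C^{|r|}$ lying over the indeterminacy of $s$. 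So nothing in your construction prevents $\widetilde F(p,\cdot)\equiv0$ on $X'$. For example, $X'$ could be an isolated point of $X$ at which $(\tilde f_0,\eta/\sigma^n)$ happens to be a zero of $R^{(l)}$. Condition (C) only becomes usable once the positive-degree terms have been removed on such components.

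The missing idea is to build this into the ideal. Let $X''$ be the union of the irreducible components of $X$ that do not meet $K'$. By local finiteness it is a closed analytic subvariety, and it is disjoint from a neighborhood of $K'$. Apply \cref{prop:runge-interp} with $\cI:=\cI_{X_0}^{\ell}\cdot\cI_{X''}$ instead of $\cI_{X_0}^\ell$.
\begin{itemize}
\item Near $K'$ this ideal equals $\cI_{X_0}^{\ell}$, so $h_1$ still qualifies.
\item Conclusion (vii) survives because $\cI\subset\cI_{X_0}^\ell$.
\item Now $\eta(p,\cdot)\equiv0$ on $X''$, so $\widetilde F(p,\cdot)=\sigma^kR^{(l)}_0(\tilde f_0(p,\cdot))$ there.
\end{itemize}

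You also skipped a small computation needed to finish. By (C) there is $x\in X'$ with $R_0(\tilde f_0(p,x))=\gamma(\tilde f_0(p,x))\,\tilde f_0(p,x)\ne0$. Homogeneity of $\gamma$ then gives inductively $R^{(l)}_0(\tilde f_0(p,x))\in\C^*\tilde f_0(p,x)$. Hence $R^{(l)}_0(\tilde f_0(p,\cdot))\not\equiv0$ on $X'$, and together with $\sigma\not\equiv0$ on $X'$ this gives $\widetilde F(p,\cdot)\not\equiv0$. This is exactly how the paper argues, with $X''$ defined relative to $\Khat_{X\setminus\{\sigma=0\}}$.
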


On the irreducible components of $X$ which meet $K$, condition \textup{(C)} follows from $(\mathrm H_Z)$, since $f_0$ maps $P\times\Op K$ into $Y\setminus Z(R)$. Condition \textup{(C)} ensures that the degree-zero part $R^{(l)}_0(\tilde f_0)$ of the iterated tuple vanishes identically on no irreducible component of $X$. It is automatic for the normalized lifts $R_0=\id$ of \cref{prop:normalized}. We retain arbitrary degree in this theorem to make the dependence on a prescribed conic lift explicit.

\begin{proof}[Proof of \cref{thm:lift}]
We are in \cref{set:spray}. Write $R^{(l)}=\sum_{j=0}^{\nu_l}R^{(l)}_j$ for the iterated lifts, $\nu_l:=\deg_\zeta R^{(l)}$. By $(\mathrm H_Z)$, $\tilde f_0$ has no zeros and $f_t$ takes values in $Y\setminus Z$ on $P\times\Op K$.

\emph{Step 1: the section $\sigma$.} Let $L\in G$ and $\sigma\in\GO(X,L)$ be obtained from \cref{lem:hull} applied to the neighborhood $\Op K$ in $(\mathrm H_Z)$. Thus $\{\sigma=0\}\cap K=\emptyset$, $\sigma$ vanishes identically on no irreducible component of $X$ or of $X_0$, and $\widehat K_{X\setminus\{\sigma=0\}}\subset\Op K$. Let $X''$ be the union of the irreducible components of $X$ which do not meet $\widehat K_{X\setminus\{\sigma=0\}}$. The irreducible components form a locally finite family, so $X''$ is a closed analytic subvariety and its ideal sheaf is coherent. Put $\cI:=\cI_{X_0}^{\ell}\cdot\cI_{X''}$, where $\cI_{X_0}$ and $\cI_{X''}$ are the ideal sheaves of $X_0$ and $X''$. Shrinking $\Op \widehat K_{X\setminus\{\sigma=0\}}$, we always assume that it does not meet $\{\sigma=0\}\cup X''$. There $\cI=\cI_{X_0}^{\ell}$. With $X$ and $X_0$ fixed, $L$, $\sigma$ and $X''$ are chosen from $K$, $G$ and the fixed neighborhood in $(\mathrm H_Z)$, independently of the precision required in \textup{(iv)}.

\emph{Step 2: lifting.} Apply \cref{lem:lifting} on a Stein neighborhood of $\widehat K_{X\setminus\{\sigma=0\}}$. It gives $l\in\N$ and $h_t\in\GP(P\times\Op\widehat K_{X\setminus\{\sigma=0\}},E^{(l)}(L(f_0)))$ such that
\[
s^{(l)}\bigl(h_t\otimes\tilde f_0^{\otimes m^{(l)}}\bigr)=f_t,
\qquad h_t\otimes\tilde f_0^{\otimes m^{(l)}}\in\Omega^{(l)}.
\]
Moreover $h_0=0$, $h_t=0$ on $P_0$, and $h_t$ belongs to $\cI_{X_0}^{\ell}$.

\emph{Step 3: approximation.} Put $\Phi(p,x,\zeta)=\zeta\otimes\tilde f_0(p,x)^{\otimes m^{(l)}}$. Choose an open tube $\mathcal U$ about the graph of $h_1$ such that $\Phi(\mathcal U)\subset\Omega^{(l)}$ and $s^{(l)}\circ\Phi$ is as close to $f_1$ on $\mathcal U$ as required in \textup{(iv)}. By \cref{prop:runge-interp}, there are $n\in\Z_{\ge0}$ and $\eta\in\GP(P\times X,E^{(l)}(L(f_0))\otimes\pr_X^*L^n)$ with components in $\cI$, vanishing on $P_0$, such that $\eta/\sigma^n$ approximates $h_1$ uniformly on $P\times\widehat K_{X\setminus\{\sigma=0\}}$. Choose the approximation so close that the segment from $\eta/\sigma^n$ to $h_1$ lies in $\mathcal U$ over $P\times\widehat K_{X\setminus\{\sigma=0\}}$, hence over $P\times\Op\widehat K_{X\setminus\{\sigma=0\}}$ after shrinking. This is possible by compactness. Since $\cI\subset\cI_{X''}$, $\eta=0$ on $X''$.

\emph{Step 4: substitution.} Put $k:=\nu_ln$ and
\[
\widetilde F(p,\cdot):=(R^{(l)})^h\bigl(\tilde f_0(p,\cdot),\eta(p,\cdot),\sigma^n\bigr),
\]
where $(R^{(l)})^h$ is the homogenization of $R^{(l)}$ (\cref{rem:spray-basic}(2)). By \cref{prop:substitution} (for the conic lift $R^{(l)}$ of degree $d^l$, \cref{lem:iteration}), $\widetilde F\in\GP(P\times X,L(f_0)^{d^l}\otimes\pr_X^*L^k)^{\oplus(N+1)}$, and $[\widetilde F(p,x)]\in Y$ whenever $\widetilde F(p,x)\ne0$. By Step 3, $\Phi(p,x,\eta(p,x)/\sigma(x)^n)\in\Omega^{(l)}$ for $x\in\Op \widehat K_{X\setminus\{\sigma=0\}}$, so $\widetilde F(p,x)\ne0$ for all $(p,x)\in P\times\Op \widehat K_{X\setminus\{\sigma=0\}}$, and with $F(p,\cdot):=[\widetilde F(p,\cdot)]$ we have
\[
F(p,x)=s^{(l)}\Bigl(\bigl(\eta(p,x)/\sigma(x)^n\bigr)\otimes\tilde f_0(p,x)^{\otimes m^{(l)}}\Bigr)\qquad(x\in\Op \widehat K_{X\setminus\{\sigma=0\}}).
\]

We check that $\widetilde F(p,\cdot)$ does not vanish identically on any irreducible component $X'$ of $X$. If $X'\cap \widehat K_{X\setminus\{\sigma=0\}}\ne\emptyset$, then $\widetilde F(p,\cdot)$ has no zeros on $X'\cap\Op \widehat K_{X\setminus\{\sigma=0\}}\ne\emptyset$. Otherwise $X'\subset X''$, so $\eta(p,\cdot)=0$ on $X'$ and $\widetilde F(p,\cdot)=\sigma^kR^{(l)}_0(\tilde f_0(p,\cdot))$ on $X'$. Condition (C) gives $x\in X'$ with $R_0(\tilde f_0(p,x))\ne0$. Since $R_0(w)=\gamma(w)w$ (\cref{rem:spray-basic}(1)), this means $\tilde f_0(p,x)\ne0$ and $\gamma(\tilde f_0(p,x))\ne0$, and by the homogeneity of $R_0$ inductively $R^{(l)}_0(\tilde f_0(p,x))\in\C^*\tilde f_0(p,x)$, so $R^{(l)}_0(\tilde f_0(p,\cdot))\not\equiv0$ on $X'$, and $\sigma\not\equiv0$ on $X'$. Hence $F=[\widetilde F]$ is a continuous family of meromorphic maps with $L(F)=L(f_0)^{d^l}\otimes\pr_X^*L^k$, holomorphic on $P\times\Op \widehat K_{X\setminus\{\sigma=0\}}$. The same argument shows that $R^{(l)}_0(\tilde f_0(p,\cdot))$ does not vanish identically on any irreducible component. Since $R^{(l)}_0(\tilde f_0(p,\cdot))\in\C^*\tilde f_0(p,\cdot)$ wherever it does not vanish, we obtain $[R^{(l)}_0(\tilde f_0(p,\cdot))]=f_0(p,\cdot)$.

\emph{Verification of \textup{(v)--(vii)}.} (v) holds by construction. (vi): for $p\in P_0$, $\eta(p,\cdot)=0$ and $\widetilde F(p,\cdot)=\sigma^kR^{(l)}_0(\tilde f_0(p,\cdot))$. (vii): the components of $\widetilde F(p,\cdot)-\sigma^kR^{(l)}_0(\tilde f_0(p,\cdot))$ are sums of terms of positive degree in $\eta(p,\cdot)$, whose components lie in $\cI\subset\cI^{\ell}_{X_0}$.

\emph{Step 5: the homotopy \textup{(i)--(iv)}.} For $t\in[0,1]$, $p\in P$ and $x\in\Op \widehat K_{X\setminus\{\sigma=0\}}$ put
\[
\hat f_t(p,x):=s^{(l)}\Bigl(\bigl((1-t)\,\eta(p,x)/\sigma(x)^n+t\,h_1(p,x)\bigr)\otimes\tilde f_0(p,x)^{\otimes m^{(l)}}\Bigr).
\]
This is well defined by Step 3, holomorphic in $x$ and continuous in $(t,p,x)$. By Step 4, $\hat f_0=F$, and $\hat f_1=f_1$ by Step 2. By the choice of $\mathcal U$, \textup{(iv)} holds. If $p\in P_0$, then $\eta(p,\cdot)=0=h_1(p,\cdot)$ and $\hat f_t(p,\cdot)=f_0(p,\cdot)=f_1(p,\cdot)$. Since $\sigma$ is invertible on $\Op \widehat K_{X\setminus\{\sigma=0\}}$, the components of $(1-t)\,\eta(p,\cdot)/\sigma^n+t\,h_1(p,\cdot)$ lie in $\cI^{\ell}_{X_0}$ there. Hence $\hat f_t(p,\cdot)$ agrees with $s^{(l)}(0\otimes\tilde f_0(p,\cdot)^{\otimes m^{(l)}})=f_0(p,\cdot)$ to order $\ell$ along $X_0\cap\Op \widehat K_{X\setminus\{\sigma=0\}}$, and $f_0(p,\cdot)$ agrees with $f_1(p,\cdot)$ by $(\mathrm H_Z)$ with $t=1$. This proves (iii).
\end{proof}

\subsection{Proofs of \texorpdfstring{\cref{thm:general,thm:main2}}{Theorems 1.7 and 1.2}}\label{sec:proof-main2}
\begin{proof}[Proof of \cref{thm:general}]
By \cref{prop:normalized}(a),(b), there are a rational spray $s'\circ \varpi$ on $\cO_Y(-m)^{\oplus r'}$ and a conic lift $R$ of it of degree $1$ with $R_0(w)=w$ and $Z(R)\subset Z(s)\subset Z$. Hence $Y\setminus Z\subset Y\setminus Z(R)$. Condition (C) of \cref{thm:lift} holds: since $R_0(w)=w$, we have $R_0(\tilde f_0(p,\cdot))=\tilde f_0(p,\cdot)$, which vanishes identically on no irreducible component of $X$ by \cref{def:families}. Moreover $R^{(l)}_0=R_0^{\circ l}=\id$ (\cref{lem:iteration}(a)), so $R^{(l)}_0(\tilde f_0)=\tilde f_0$ in the conclusions of \cref{thm:lift}.

(2) Under $(\mathrm H_Z)$, \cref{thm:lift} applies to $R$ (note $Y\setminus Z\subset Y\setminus Z(R)$) and gives the conclusions (i)--(vii) of \cref{thm:main2}(2).

(1) By (i)--(iv) of \cref{thm:main2}(2) with $t=0$, $(\mathrm H_Z)$ implies (A).

(A)$\Rightarrow(\mathrm H_Z)$: Shrinking $\Op K$, we may assume that it is Stein by \cref{lem:hull}. Use \cref{set:spray} for the normalized spray $s'\circ \varpi$, with the prescribed closed set $Z\supset Z(R)$. Thus $\Omega$ consists only of points whose base and image lie in $Y\setminus Z$. Since $f_1(P\times K)\subset Y\setminus Z$ is compact, by (A) we may choose $F=[\widetilde F]$ so close to $f_1$ uniformly on $P\times K$ that $F$ takes values in $Y\setminus Z$ near $P\times K$ and \cref{lem:linearization} applies with $\Lambda:=P$, $\varphi:=f_1$ and $g:=F$. Moreover $\widetilde F$ has no zeros near $P\times K$. Linearization gives $\xi(p,\cdot)\in\GO(\Op K,f_1(p,\cdot)^*\cO_Y(-m)^{\oplus r'})$, with $(s'\circ \varpi)\circ\xi(p,\cdot)=F(p,\cdot)$, $t\xi(p,\cdot)\in\Omega$, $\xi(p,\cdot)=0$ for $p\in P_0$, and $\xi(p,\cdot)$ vanishing to order $\ell$ along $X_0$. Set $f_0:=F$, $\tilde f_0:=\widetilde F$ and $f_t(p,\cdot):=(s'\circ \varpi)\circ((1-t)\xi(p,\cdot))$. Then $f_t(p,\cdot)$ and $f_0(p,\cdot)$ agree with $f_1(p,\cdot)$ to order $\ell$ along $X_0$, which gives the interpolation conditions, and the whole homotopy takes values in $Y\setminus Z$ by the choice of $\Omega$. This proves $(\mathrm H_Z)$.
\end{proof}

\begin{proof}[Proof of \cref{thm:main2}]
Apply \cref{thm:general} with $Z=\emptyset$ to an algebraic dominating spray. By \cref{prop:normalized}(c), its degree-one conic lift $R$ has $R_0=\id$ and no zeros on $\widehat Y\times\C^{r}$, and the same holds for $R^{(l)}$.

For the assertion following the theorem, on $X\setminus\{\sigma=0\}$ we have $\widetilde F=\sigma^kR^{(l)}(\tilde f_0,\eta/\sigma^n)$ by \cref{prop:substitution}(ii), so $\widetilde F(p,\cdot)$ has no zeros there wherever $\tilde f_0(p,\cdot)$ has none. If $f_0(p,\cdot)$ is holomorphic near a point of $X\setminus\{\sigma=0\}$, write locally $\tilde f_0(p,\cdot)=a u$ with $u$ a holomorphic map to $\C^{N+1}$ without zeros. Equivariance gives
\[
R^{(l)}\bigl(a u,(\eta/\sigma^n)(p,\cdot)\bigr)=a\,R^{(l)}\bigl(u,a^{m^{(l)}}\cdot(\eta/\sigma^n)(p,\cdot)\bigr).
\]
After canceling $a$, the right side is holomorphic and nonzero, since all weights are nonnegative. It therefore extends $F(p,\cdot)$ holomorphically across the removable common zeros. If $K$ is $\cO(X)$-convex, then $\sigma=1$ by \cref{lem:hull}.
\end{proof}

\subsection{Proof of \texorpdfstring{\cref{thm:main1}}{Theorem 1.1}}\label{sec:proof-main1}

\begin{proof}[Proof of \cref{thm:main1}]
Let $s\colon\cO_Y(-m)^{\oplus r}\to Y$ be an algebraic dominating spray with a conic lift $R$ of degree $1$ such that $R_0(w)=w$ and $R$ has no zeros on $\widehat Y\times\C^{r}$ (\cref{prop:normalized}(c)). Then $Z=\emptyset$, $E^{(l)}=\cO_Y(-m)^{\oplus rl}$, $\Omega$ and $\Omega^{(l)}$ are the whole bundles, and $s^{(l)}$ is a morphism. Let $\Op K$ be as in the hypothesis of \cref{thm:main1}, and let $L$ and $\sigma$ be obtained from \cref{lem:hull} applied to $\Op K$.

\emph{Step 1.} Since $Y$ is Oka \cite[Corollary 5.6.14]{F17}, POPAI \cite[Theorem 1.2]{F23} gives a homotopy $f^t\colon P\times X\to Y$ \textup{(}$t\in[0,1]$\textup{)} with $f^0=f_0$, $f^1$ holomorphic in $x$ on $X$ and $f^t=f_0$ on $(P_0\times X)\cup(P\times X_0)$. Put $g:=f^1$, $L(g):=g^*\cO_Y(1)$ and $\tilde g:=g^*w$. Then $\tilde g$ has no zeros.

\emph{Step 2.} The homotopy $f^t$ is only continuous. We now replace it near $\widehat K_{X\setminus\{\sigma=0\}}$ by a homotopy of holomorphic maps. Shrinking $\Op\widehat K_{X\setminus\{\sigma=0\}}$, we may assume that it is a Stein neighborhood of $\widehat K_{X\setminus\{\sigma=0\}}$ contained in $\Op K\setminus\{\sigma=0\}$. This is possible since $\widehat K_{X\setminus\{\sigma=0\}}\subset\Op K$ by the choice of $\sigma$, and since $\widehat K_{X\setminus\{\sigma=0\}}$ has a basis of Stein neighborhoods in the Stein space $X\setminus\{\sigma=0\}$. By POPAI \cite[Theorem 1.2]{F23} (see also \cite[Corollary 5.7]{Ku21}) applied on $\Op\widehat K_{X\setminus\{\sigma=0\}}$ to $(t,p)\mapsto f^t(p,\cdot)$, with $Q_0:=(\{0,1\}\times P)\cup([0,1]\times P_0)\subset Q:=[0,1]\times P$ and the subvariety $X_0\cap\Op\widehat K_{X\setminus\{\sigma=0\}}$, there is a homotopy $f^{(t,\tau)}\colon P\times\Op\widehat K_{X\setminus\{\sigma=0\}}\to Y$ \textup{(}$\tau\in[0,1]$\textup{)} with $f^{(t,0)}=f^t$, $f^{(t,1)}(p,\cdot)$ holomorphic on $\Op\widehat K_{X\setminus\{\sigma=0\}}$, and $f^{(t,\tau)}(p,x)=f^t(p,x)$ for $(t,p,x)\in(Q_0\times\Op\widehat K_{X\setminus\{\sigma=0\}})\cup(Q\times(X_0\cap\Op\widehat K_{X\setminus\{\sigma=0\}}))$. Let $\chi\colon X\to[0,1]$ be a continuous function with compact support in $\Op\widehat K_{X\setminus\{\sigma=0\}}$ which equals $1$ near $\widehat K_{X\setminus\{\sigma=0\}}$, and put $f_\chi^t(p,x):=f^{(t,\chi(x))}(p,x)$ for $x\in\Op\widehat K_{X\setminus\{\sigma=0\}}$ and $f_\chi^t(p,x):=f^t(p,x)$ otherwise. Then $f_\chi^t$ is a homotopy from $f_0$ to $g$ on $P\times X$, equal to $f_0$ on $(P_0\times X)\cup(P\times X_0)$, and $f_\chi^t(p,\cdot)$ is holomorphic near $\widehat K_{X\setminus\{\sigma=0\}}$.

\emph{Step 3.} Shrinking $\Op\widehat K_{X\setminus\{\sigma=0\}}$, we may assume that it is Stein and that $f_\chi^t(p,\cdot)$ is holomorphic on it. Applying \cref{lem:lifting} to the homotopy $t\mapsto f_\chi^{1-t}$ from $g$ to $f_0$ on $\Op\widehat K_{X\setminus\{\sigma=0\}}$ and shrinking $\Op\widehat K_{X\setminus\{\sigma=0\}}$ again, we obtain $l$ and a homotopy $h_t\in\GP(P\times\Op\widehat K_{X\setminus\{\sigma=0\}},E^{(l)}(L(g)))$ with $f_\chi^{1-t}(p,\cdot)=s^{(l)}(h_t(p,\cdot)\otimes\tilde g(p,\cdot)^{\otimes m^{(l)}})$, $h_0=0$, and $h_t(p,\cdot)=0$ for $p\in P_0$ and on $X_0\cap\Op\widehat K_{X\setminus\{\sigma=0\}}$. Choose $\eta$ as in Step 3 of the proof of \cref{thm:lift} (with $\cI:=\cI_{X_0}$, the ideal sheaf of $X_0$). Put $\nu_l:=\deg_\zeta R^{(l)}$, $k:=\nu_l n$, and define $\widetilde F:=(R^{(l)})^h(\tilde g,\eta,\sigma^n)$. Since $\tilde g$ has no zeros and $R^{(l)}$ has no zeros on $\widehat Y\times\C^{rl}$, \cref{prop:substitution}(ii) shows that $F=[\widetilde F]$ is holomorphic on $P\times(X\setminus\{\sigma=0\})$ with $F(p,\cdot)=s^{(l)}((\eta(p,\cdot)/\sigma^n)\otimes\tilde g(p,\cdot)^{\otimes m^{(l)}})$ there, and $L(F)=L(g)\otimes\pr_X^*L^k$.

\emph{Step 4.} For $(t,\vartheta)\in[0,1]^2$ and $x\in\Op\widehat K_{X\setminus\{\sigma=0\}}$ put
\[
\Lambda_{t,\vartheta}(p,x):=s^{(l)}\Bigl(\bigl((1-\vartheta)(\eta/\sigma^n)(p,x)+\vartheta h_t(p,x)\bigr)\otimes\tilde g(p,x)^{\otimes m^{(l)}}\Bigr).
\]
Then $\Lambda_{t,0}=F$, $\Lambda_{t,1}=f_\chi^{1-t}$, and $\Lambda_{1,\vartheta}=\hat f_\vartheta$ is the homotopy of Step 5 of the proof of \cref{thm:lift}. Since $h_0=0$, we have $\Lambda_{0,\vartheta}=s^{(l)}((1-\vartheta)(\eta/\sigma^n)\otimes\tilde g^{\otimes m^{(l)}})$. We use this formula to define $\Lambda_{0,\vartheta}(p,x)$ for all $x\in X\setminus\{\sigma=0\}$. Let $\chi'\colon X\to[0,1]$ be a continuous function such that $1-\chi'$ has compact support in $\Op\widehat K_{X\setminus\{\sigma=0\}}$ and $\chi'=0$ near $\widehat K_{X\setminus\{\sigma=0\}}$. For $t\in[0,1]$ and $(p,x)\in P\times(X\setminus\{\sigma=0\})$ define
\[
f_t(p,x):=\begin{cases}f_\chi^{\,2t\,\chi'(x)}(p,x),&t\le1/2,\\ \Lambda_{1-\chi'(x),2-2t}(p,x),&t\ge1/2,\ x\in\Op\widehat K_{X\setminus\{\sigma=0\}},\\ \Lambda_{0,2-2t}(p,x),&t\ge1/2,\ x\notin\operatorname{supp}(1-\chi').\end{cases}
\]
The expressions agree where they overlap: at $t=1/2$ because $\Lambda_{1-\chi',1}=f_\chi^{\,\chi'}$, and on $\Op\widehat K_{X\setminus\{\sigma=0\}}\setminus\operatorname{supp}(1-\chi')$ because $\chi'=1$ and $h_0=0$ there. Hence $f_t$ is a homotopy of continuous maps from $f_0$ to $F$ on $P\times(X\setminus\{\sigma=0\})$.
Near $\widehat K_{X\setminus\{\sigma=0\}}$ we have $\chi'=0$, so $f_t=f_0$ for $t\le1/2$ and $f_t=\hat f_{2-2t}$ afterwards. This gives (i). On $P_0$ and $X_0\setminus\{\sigma=0\}$, the homotopy $f_\chi$ is fixed and $h=\eta=0$, giving (ii). For $p\in P_0$, $\widetilde F(p,\cdot)=\sigma^k\tilde g(p,\cdot)$, hence $F(p,\cdot)=g(p,\cdot)=f_0(p,\cdot)$ as meromorphic maps. Density in each component of $X$ and $X_0$ follows from \cref{lem:hull}.

\emph{Jets.} In Step 1 apply the relative jet version of the Oka principle \cite[Theorem 1.4]{FP01} to $X\times Y\to X$, with empty approximation set. Thus every $f^t(p,\cdot)$ is holomorphic on a common neighborhood of $X_0$ and agrees with $f_0(p,\cdot)$ to order $\ell$ along $X_0$. Apply the same theorem in Step 2 with parameter pair $(Q,Q_0)$ on $\Op\widehat K_{X\setminus\{\sigma=0\}}$. Since $(Q,Q_0)$ is again a finite CW pair, $Q_0$ is a strong deformation retract of a neighborhood in $Q$, as required in \cite[Definition 3]{FP01}. The resulting homotopy $f^{(t,\tau)}$ preserves the jets of $f^t(p,\cdot)$ along $X_0\cap\Op\widehat K_{X\setminus\{\sigma=0\}}$. The cut-off $\chi$ used to form $f_\chi$ equals $1$ near $\widehat K_{X\setminus\{\sigma=0\}}$, so the homotopy to be lifted has those jets there. In Step 3 use $\cI:=\cI_{X_0}^{\ell}$. The components of $\widetilde F(p,\cdot)-\sigma^k\tilde g(p,\cdot)$ then lie in $\cI_{X_0}^{\ell}$ globally, because all other terms have positive degree in $\eta(p,\cdot)$. On $X\setminus\{\sigma=0\}$ the tuples $\widetilde F(p,\cdot)$ and $\sigma^k\tilde g(p,\cdot)$ have no zeros, so $F(p,\cdot)$ and $g(p,\cdot)$ agree to order $\ell$ along $X_0\setminus\{\sigma=0\}$. Since $g(p,\cdot)$ agrees with $f_0(p,\cdot)$ to order $\ell$ along $X_0$, this proves the asserted interpolation.
\end{proof}

\begin{remark}\label[remark]{rem:torus}
\Cref{thm:main1} does not hold for analytically elliptic manifolds in general. Indeed, the holomorphic function $f(z)=1/z$ on a neighborhood of an annulus $A\subset\C$ clearly extends to a continuous function $\tilde f\colon\C\to\C$. Consider an elliptic curve $Y=\C/\Lambda$ with quotient map $\pi\colon\C\to Y$, which is elliptic but not algebraically elliptic, since it is not unirational (\cref{thm:ME}), and note that every compact subset of $\C$ is $G$-convex for $G=\{[\cO_\C]\}$. If $\pi\circ f$ were approximated on $A$ by a meromorphic map $\C\merto Y$, this map would be holomorphic and of the form $\pi\circ g$ with $g\in\cO(\C)$, so $g-\lambda$ would approximate $f$ on $A$ for some $\lambda\in\Lambda$. This contradicts $\int_{|z|=r}f\,dz=2\pi i$ for a circle $\{|z|=r\}\subset A$. Moreover, $t\mapsto\pi\circ(tf)$ is a homotopy of holomorphic maps on $\Op A$ from a constant map to $\pi\circ f$, so the implication (H)$\Rightarrow$(A) of \cref{thm:main2} fails for $Y$ as well. For projective manifolds, analytic ellipticity is equivalent to the Oka property \cite[Theorem 1.1]{FL26}. An algebraic analog is \cite[Theorem 1.3]{BK03}: if every holomorphic map from a disc in a nonsingular algebraic curve $C$ to a nonsingular projective variety $Y$ can be approximated by regular maps $C\to Y$, then the first Betti number of $Y$ vanishes.
\end{remark}

\section{Rational sprays and birational geometry}\label{sec:birational}

\subsection{Algebraic sprays from rational sprays}
For an affine variety $X$ we write $\cO^{\mathrm{alg}}(X)$ for the ring of regular functions on $X$, and $\cI^{\mathrm{alg}}(Z)\subset\cO^{\mathrm{alg}}(X)$ for the ideal of a closed subset $Z\subset X$. The superscript distinguishes them from the sheaves of holomorphic functions. For a homogeneous polynomial $h$ on $\C^{N+1}$ and $Y\subset\PP^N$ we put $V(h):=Y\cap\{h=0\}$ and $D(h):=Y\setminus V(h)$.

\begin{lemma}\label[lemma]{lem:localization}
Let $Y\subset\PP^N$ be a smooth projective variety and $h$ a homogeneous polynomial of degree $d$ which does not vanish identically on $C(Y)$, regarded as a section of $\cO_Y(d)$, and let $s_h\colon D(h)\times\C^{r}\to Y$ be a morphism with $s_h(y,0)=y$. For $k\gg0$, the map $s\colon\cO_Y(-dk)^{\oplus r}\to Y$ defined by
\[
s(v_y):=\begin{cases}s_h\bigl(y,\langle h^k(y),v_y\rangle\bigr)&\text{if }y\in D(h),\\ y&\text{if }y\in V(h)\end{cases}\qquad\bigl(v_y\in\cO_Y(-dk)_y^{\oplus r}\bigr)
\]
is a morphism. Here $h^k$ is a section of $\cO_Y(dk)$, and the brackets denote the componentwise duality pairing between $\cO_Y(dk)_y$ and $\cO_Y(-dk)_y^{\oplus r}$.
\end{lemma}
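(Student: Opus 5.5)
The plan is to check regularity of $s$ locally on affine charts of the total space $E:=\cO_Y(-dk)^{\oplus r}$, separately near fibers over $D(h)$ and over $V(h)$.

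\emph{Over $D(h)$.} On the open set $p^{-1}(D(h))\subset E$, the map $v_y\mapsto\langle h^k(y),v_y\rangle$ is a regular trivialization of $E|_{D(h)}$ onto $D(h)\times\C^r$, because $h^k$ is a nowhere vanishing section of $\cO_Y(dk)$ there. So $s$ is the composition of this isomorphism with the morphism $s_h$, and it is regular there.

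\emph{Near $V(h)$.} Cover $Y$ by affine open sets $U_i=D(w_i)$ and work in $U:=U_i$, which is affine. On $U$ the bundle $\cO_Y(-dk)$ is trivialized by $w_i^{dk}$, and $h^k=g^k w_i^{dk}$ with $g:=h/w_i^d\in\cO^{\mathrm{alg}}(U)$. Then $\langle h^k(y),v_y\rangle=g(y)^k\zeta$, where $\zeta\in\C^r$ is the fiber coordinate. Thus on $U\times\C^r$ the map reads $(y,\zeta)\mapsto s_h(y,g(y)^k\zeta)$ for $g(y)\ne0$, and $(y,\zeta)\mapsto y$ for $g(y)=0$.

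To show this is a morphism, embed $Y$ in affine space locally. Concretely, take an affine open $W\subset Y$ containing a given point $y_0$ and a target point, or more simply use coordinates of $\PP^N$ after composing with a further affine cover of the target. Since $D(h)\cap U=U_g$ is affine with coordinate ring $\cO^{\mathrm{alg}}(U)[1/g]$, each coordinate function of $s_h$, in an affine chart of the target containing $y_0$, is a polynomial in $\zeta$ whose coefficients lie in $\cO^{\mathrm{alg}}(U)[1/g]$. Its constant term is the coordinate of $y$, because $s_h(y,0)=y$. Substituting $\zeta\mapsto g^k\zeta$ multiplies the degree-$j$ part by $g^{kj}$. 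For $k$ at least the maximal pole order of the coefficients, every term with $j\ge1$ becomes a regular function on $U\times\C^r$ divisible by $g$, and the constant term is regular. So the expression extends regularly to $U\times\C^r$ and equals $y$ on $\{g=0\}$, matching the definition.

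The main obstacle is making ``coordinate functions of $s_h$'' precise, since $Y$ is projective and $s_h$ need not land in one affine chart. I would handle this via the cone, as in the paper. Write $s_h=[S]$ with $S$ a tuple of regular functions on $U_g\times\C^r$ with values in $\C^{N+1}\setminus\{0\}$, after possibly shrinking $U$ so that the relevant line bundle is trivial. Arrange $S(y,0)=\tilde y$, a local lift of $y$. Choose $k$ so that $g^{k}$ clears all denominators of $S(y,g^k\zeta)$, uniformly over a finite affine cover of $Y$, which is possible by quasi-compactness. The resulting tuple is $\tilde y+g\cdot(\text{regular})$, which is nonzero near $\{g=0\}\times\C^r$. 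Shrinking to a neighborhood, $[\,\cdot\,]$ of it is a morphism agreeing with $s$. Near $g=0$ the nonvanishing holds on all of $\C^r$: for $g(y)=0$ the tuple equals $\tilde y\ne0$, and nonvanishing is an open condition, so the morphism extends over an open set containing $\{g=0\}\times\C^r$. Together with the chart over $D(h)$, this covers $E$ and shows that $s$ is a morphism.
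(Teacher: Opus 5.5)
Your final argument takes a different route from the paper's and can be made to work, but its key step is not justified as written. Everything depends on having a nowhere vanishing tuple $S$ of regular functions on \emph{all} of $U_g\times\C^r$ with $s_h=[S]$. Only then is $S\in\cO^{\mathrm{alg}}(U)[1/g][\zeta]^{\oplus(N+1)}$, so that substituting $\zeta\mapsto g^k\zeta$ clears the poles.

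Such an $S$ exists if and only if $s_h^*\cO_{\PP^N}(1)$ is trivial on $U_g\times\C^r$, and ``shrinking $U$'' does not give this. Shrinking $U$ around a point of $V(h)$ only removes divisors of the form $D\times\C^r$. It therefore cannot trivialize a class on $U_g\times\C^r$ that is not pulled back from $U_g$. A lift defined only near the zero section would leave you with rational dependence on $\zeta$, which is exactly the defect of your first attempt; its claim that target-chart coordinates of $s_h$ are polynomial in $\zeta$ is false.

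The missing input is homotopy invariance of the Picard group. Since $U_g$ is smooth, $\pr^*\colon\Pic(U_g)\to\Pic(U_g\times\C^r)$ is an isomorphism (e.g.\ Hartshorne, \emph{Algebraic Geometry}, II.6.6 and II.6.16). Restricting to the zero section and using $s_h(y,0)=y$ gives $s_h^*\cO_{\PP^N}(1)\cong\pr^*(\cO_Y(1)|_{U_g})$, which is trivialized by $w_i$, so no shrinking is needed at all. Then $S(y,0)\in\C^*\tilde y$ and $u:=S_i(\cdot,0)$ is a unit on $U_g$; replacing $S$ by $S/\pr^*u$ gives $S(\cdot,0)=\tilde y$.

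From there your argument is correct. For $k$ above the pole orders, $S(y,g^k\zeta)=\tilde y+g\,T$ with $T$ regular on $U\times\C^r$. This tuple has no zeros on $U\times\C^r$, its class lies in $Y$ by density, and it equals $s$.

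The paper instead repairs your first idea directly in a target chart and never needs a line bundle on $U_g\times\C^r$. Let $V\subset Y_j\times\C^r$ be the closed set where $s_h$ is undefined or leaves $Y_j$. The paper shows $(z_i\circ s_h)\cdot\cI^{\mathrm{alg}}(V)^n$ consists of regular functions. It then uses the Nullstellensatz to choose a common denominator $Q\in\cI^{\mathrm{alg}}(V)^n$ with $Q(y,0)=h_j(y)^M$. After substituting $h_j^k\zeta$ this gives $z_i\circ s=(z_i+h_j^{k-M}\widetilde P_i)/(1+h_j^{k-M}\widetilde Q)$, which is regular near $V(h)\cap Y_j$.

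That proof is pure commutative algebra and does not use smoothness of $Y$. Yours uses smoothness through $\Pic$. Once $S$ is available, though, yours is shorter and gives more: a homogeneous lift of $s$ without zeros on all of $D(w_i)\times\C^r$, in the spirit of the paper's conic lifts.
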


\begin{proof}
On $D(h)$, $v_y\mapsto(y,\langle h^k(y),v_y\rangle)$ is an isomorphism of vector bundles onto $D(h)\times\C^{r}$, so $s$ is regular over $D(h)$. It remains to prove regularity near the points over $V(h)$. Fix $j$ and regard $Y_j:=Y\cap\{w_j\ne0\}$ as an affine variety in $\C^N$ via the affine coordinates $w_i/w_j$ ($i\ne j$), which we denote by $z_1,\dots,z_N$. We use $(z_1,\dots,z_{N+r})$ as coordinates on $Y_j\times\C^{r}\subset\C^{N+r}$. Let $e_j(y)\in\cO_Y(-1)_y$ be the point with $j$-th coordinate $1$ and write $v_y=(z_{N+1}e_j(y)^{\otimes dk},\dots,z_{N+r}e_j(y)^{\otimes dk})$. Then $\langle h^k(y),v_y\rangle=h_j(y)^k(z_{N+1},\dots,z_{N+r})$, where $h_j(w_0,\dots,w_N):=h(w_0,\dots,w_N)/w_j^d$. Let $V\subset Y_j\times\C^{r}$ be the complement of $s_h^{-1}(\{w_j\ne0\})$. Then $V\cap(Y_j\times\{0\})\subset(V(h)\cap Y_j)\times\{0\}$, since $s_h(y,0)=y$. For $1\le i\le N$ let $(s_h)_i:=z_i\circ s_h$. It is regular on $(Y_j\times\C^{r})\setminus V$. If $\cI^{\mathrm{alg}}(V)=(g_1,\dots,g_c)$, then $(s_h)_i$ is regular on $\{g_\mu\ne0\}$, so $g_\mu^M(s_h)_i$ is regular for some $M$ and all $\mu$. Since $\cI^{\mathrm{alg}}(V)^{c(M-1)+1}\subset(g_1^M,\dots,g_c^M)$, we obtain $(s_h)_i\cdot\cI^{\mathrm{alg}}(V)^n\subset\cO^{\mathrm{alg}}(Y_j\times\C^{r})$ for sufficiently large $n$. We fix such $n$. The ideal $\cI^{\mathrm{alg}}(V)^n+(z_{N+1},\dots,z_{N+r})\subset\cO^{\mathrm{alg}}(Y_j\times\C^{r})$ has zero set $V\cap(Y_j\times\{0\})$, which is contained in $(V(h)\cap Y_j)\times\{0\}$, where $h_j$ vanishes. By the Nullstellensatz, $h_j^M\in\cI^{\mathrm{alg}}(V)^n+(z_{N+1},\dots,z_{N+r})$ for some $M$, that is, there is $Q\in\cI^{\mathrm{alg}}(V)^n$ with $h_j^M-Q\in(z_{N+1},\dots,z_{N+r})$. In particular $Q(y,0)=h_j(y)^M$ for $y\in Y_j$. Then $P_i:=(s_h)_iQ\in\cO^{\mathrm{alg}}(Y_j\times\C^{r})$ and, since $(s_h)_i(y,0)=z_i(y)$, $P_i(y,0)=z_i(y)h_j(y)^M$, first on $D(h)\cap Y_j$ and then on all of $Y_j$ by density. Since $Q$ vanishes on $V$, we have $(s_h)_i(z)=P_i(z)/Q(z)$ for every $z\in Y_j\times\C^{r}$ with $Q(z)\ne0$, and after substituting $h_j^k(z_{N+1},\dots,z_{N+r})$, every nonconstant fiber term in $P_i$ and $Q$ is divisible by $h_j^k$. Dividing the resulting numerator and denominator by $h_j^M$ gives
\[
(s_h)_i\bigl(y,h_j(y)^k(z_{N+1},\dots,z_{N+r})\bigr)=\frac{z_i(y)+h_j(y)^{k-M}\widetilde P_i}{1+h_j(y)^{k-M}\widetilde Q}
\]
with $\widetilde P_i,\widetilde Q\in\cO^{\mathrm{alg}}(Y_j\times\C^{r})$. For $k>M$ the denominator equals $1$ on $\{h_j=0\}$, so the right-hand side is regular near every point over $V(h)\cap Y_j$ and equals $z_i(y)$ there. Hence $s$ maps a neighborhood of every such point into the affine variety $Y_j$, and its coordinate functions $z_i\circ s$ are regular there. Since a map into an affine variety is regular if and only if its coordinate functions are regular, $s$ is regular near such points. Finitely many charts cover $Y$.
\end{proof}

\begin{theorem}\label[theorem]{thm:alg-spray}
Let $Y\subset\PP^N$ be a smooth projective variety and $s\colon E\merto Y$ a rational spray of rank $r\ge1$, and let $\overline{I(s)}$ be the closure of $I(s)\subset E$ in $\PP(E\oplus\cO_Y)$. For $y\in Y$ put $I(s)_y:=I(s)\cap E_y$ and $\overline{I(s)}_y:=\overline{I(s)}\cap\PP(E_y\oplus\C)$. Let $Z$ be the union of $Z(s)$ and $\{y\in Y:\dim\overline{I(s)}_y\ge r-1\}$. The latter is a proper Zariski closed subset, so $Z\ne Y$ if $Z(s)\ne Y$. Then for every finite set $A\subset Y\setminus Z$ there is an algebraic spray $s_A\colon\cO_Y(-m)^{\oplus r}\to Y$ for some integer $m\ge1$ which is dominating at every point of $A$, hence on a Zariski open neighborhood of $A$. In particular, $Y$ is rationally elliptic if and only if it carries an algebraic spray which is dominating at some point.
\end{theorem}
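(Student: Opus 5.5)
The plan is to build $s_A$ from $s$ by restricting to a Zariski open set where $s$ becomes a morphism on a trivial bundle, and then extending by the identity with \cref{lem:localization}. First reduce to a trivial bundle. Choose a Zariski open $U\subset Y$ containing $A$ over which $E$ is trivial, and use \cref{rem:rational-basic}(2) to get the rational spray $s_U$ on $Y\times\C^r$; note $Z(s_U)\cap A=\emptyset$. The difficulty is that $s_U$ may be indeterminate at points $(y,v)$ with $v\neq0$, even for $y\in A$. To remove this I would precompose with a fiberwise polynomial map $\phi\colon Y\times\C^r\to Y\times\C^r$, say $\phi(y,v)=(y,\alpha(y)v)$ or more generally $v\mapsto g(v)$ for a polynomial map $g$ with $g(0)=0$ and $dg_0$ invertible. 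The requirement is that $\phi(\{y\}\times\C^r)$ avoids $I(s)_y$ for $y$ near $A$.

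The hypothesis $\dim\overline{I(s)}_y<r-1$ is what makes this avoidance possible. The fiber $\overline{I(s)}_y$ has codimension at least $2$ in $\PP(E_y\oplus\C)\cong\PP^r$. For $y\in A$, I would use a generic linear projection from a point, or a generic polynomial curve-avoiding argument, to choose an entire polynomial map $g_y\colon\C^r\to\C^r$ whose image misses $I(s)_y$ and which is a local isomorphism at $0$. One simple candidate is the restriction of a generic linear embedding $\C^r\to\C^{r}$ composed with a suitable ``shrinking'' polynomial map; dimension counting in $\PP^r$ shows that a generic such family misses a set of codimension $\ge2$. Because $A$ is finite, I would interpolate these choices by a single map with polynomial coefficients in $y$ on an affine chart. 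I would then take the open set $D(h)$, with $h$ homogeneous, $A\subset D(h)\subset U\setminus Z$, small enough that the composite $s_h:=s_U\circ\phi$ is a morphism $D(h)\times\C^r\to Y$ with $s_h(y,0)=y$. This last step requires shrinking, because the set of $y$ over which $\phi$ meets $I(s)$ is constructible and closed and does not contain $A$. Since $s_U(y,0)=y$ and $dg_0$ is invertible, $s_h$ is still dominating at each point of $A$.

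Applying \cref{lem:localization} to $s_h$ gives a morphism $s_A\colon\cO_Y(-dk)^{\oplus r}\to Y$. Over $D(h)$ it equals $s_h$ composed with the fiberwise isomorphism $v\mapsto\langle h^k,v\rangle$, so it is dominating on $A$. It is also the identity on the zero section, so it is an algebraic spray, and the non-dominating locus is closed, which gives the Zariski open neighborhood. The final equivalence then follows. For ``if'', an algebraic spray is a rational spray and dominating somewhere means $Z(s)\ne Y$. For ``only if'', take $A$ to be a single point of $Y\setminus Z$, which is nonempty once properness of $\{\dim\overline{I(s)}_y\ge r-1\}$ is shown. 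That properness I would derive from $\overline{I(s)}$ having codimension $\ge2$ in $\PP(E\oplus\cO_Y)$, which holds because $I(s)$ has codimension $\ge2$ in the smooth $E$, together with upper semicontinuity of fiber dimension.

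The main obstacle is the avoidance step: producing a polynomial, fiberwise map with the right $1$-jet at $0$ whose image misses $I(s)_y$ for all $y$ near $A$ simultaneously. Codimension $2$ in the projective closure is exactly what prevents a generic line, and more generally a generic entire polynomial image, from meeting it. Making this uniform in $y$ and polynomial is the delicate part. I would first try $g(v)=v\cdot\lambda(v)$ with $\lambda$ built from lines through $0$ parametrized rationally, checked pointwise at $A$ and then spread out by semicontinuity.
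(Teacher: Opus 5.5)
Your reduction to a trivial bundle, the final use of \cref{lem:localization}, the domination check and the closing equivalence are fine. The step you call the main obstacle, however, is a genuine gap, and the candidates you suggest for it do not work.

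\emph{Your candidate maps are surjective.} Take $g(v)=\lambda(v)v$ with $\lambda$ polynomial, as it must be for $s_h$ to be a morphism on $D(h)\times\C^r$, and $\lambda(0)\ne0$, as invertibility of $dg_0$ forces. On each line $\C v_0$ the map $t\mapsto t\lambda(tv_0)$ is a nonconstant polynomial, hence onto $\C$. So $g$ is surjective and meets $I(s)_a$ whenever $I(s)_a\ne\emptyset$. Avoidance is not generic either. A polynomial map $\C^r\to\C^r$ whose top-degree homogeneous parts have no common nontrivial zero is proper, hence finite and surjective, and this is the generic case. Your codimension count only shows that a generic \emph{line} through $0$ misses $I(s)_a$, which is a one-dimensional statement.

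\emph{The spreading-out step also fails.} The set of $y$ over which $\phi(\{y\}\times\C^r)$ meets $I(s)_y$ is the image of the closed set $\phi^{-1}(I(s))$ under the non-proper projection $D(h)\times\C^r\to D(h)$. It is constructible but not closed in general. For example, take local coordinates with $\dim Y=1$, $r=2$, and suppose $I(s)$ near the fiber over $a=\{y=0\}$ is the curve $\{yv_1=1,\ v_2=0\}$. Then $I(s)_a=\emptyset$ while $I(s)_y\ne\emptyset$ for every nearby $y\ne a$, and $\dim\overline{I(s)}_a=0=r-2$ is allowed by the hypothesis. So pointwise avoidance at $A$ cannot be propagated to a neighborhood. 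The points at infinity of $\overline{I(s)}_a$ must enter the construction.

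\emph{The missing idea} is to move the zero section by automorphisms of $D(h)\times\C^r$ that fix $I(s)$ pointwise.
\begin{enumerate}[(1)]
\item Let $B_a\subset\PP^{r-1}$ be the image of $\overline{I(s)}_a$ under projection from $[0:1]$. It is a proper closed subset because $\dim\overline{I(s)}_a\le r-2$. Choose a basis $v_1,\dots,v_r$ of $\C^r$ with $[v_i]\notin B_a$ for all $a\in A$.
\item Since $[v_i:0]\notin\overline{I(s)}_a$, projection from $[v_i:0]$ is proper on $\overline{I(s)}$ over a Zariski neighborhood of $A$. Since $\C v_i\cap I(s)_a=\emptyset$, the closure $Z_i$ of $(\id\times\pi_i)(I(s)|_{D(h)})$ in $D(h)\times\C^{r-1}$ misses $(a,0)$, where $\pi_i\colon\C^r\to\C^r/\C v_i$.
\item As $D(h)$ is affine, there is a regular function $g_i$ vanishing on $Z_i$ with $g_i(a,0)\ne0$ for $a\in A$.
\item The shears $\varphi_i^{(t)}(y,\zeta):=(y,\zeta+t\,g_i(y,\pi_i(\zeta))v_i)$ are automorphisms over $D(h)$ that fix $I(s)|_{D(h)}$ pointwise, hence preserve its complement.
\item Therefore $\varphi(y,t):=\varphi_r^{(t_r)}\circ\dots\circ\varphi_1^{(t_1)}(y,0)$ maps all of $D(h)\times\C^r$ into the complement of $I(s)$, with no shrinking in $y$, and $\partial_{t_i}\varphi(a,0)=g_i(a,0)v_i$.
\end{enumerate}
Then $s\circ\varphi$ is the morphism $s_h$ you need, dominating at every point of $A$, and your remaining steps go through.
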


\begin{proof}
The fiber-dimension locus in the statement is Zariski closed by upper semicontinuity for the projective morphism $\overline{I(s)}\to Y$. Since $\operatorname{codim}I(s)\ge2$ in the smooth variety $E$, $\dim\overline{I(s)}\le\dim Y+r-2$, so the locus where $\dim\overline{I(s)}_y\ge r-1$ has dimension $<\dim Y$.

\emph{Step 1.} For $q\gg0$, $E\otimes\cO_Y(q)$ is generated by global sections \cite[n$^\circ$\,66, Th\'eor\`eme 2(a)]{Se55}, so $r$ general global sections of $E\otimes\cO_Y(q)$ form a basis at every point of $A$. They form a frame on a Zariski open neighborhood of $A$, and $\cO_Y(q)$ is trivial on the complement of a hyperplane avoiding $A$. Hence there is a Zariski open set $U_0\supset A$ over which $E$ is trivial. Taking a generic linear combination of homogeneous polynomials of the same degree vanishing on $Z\cup(Y\setminus U_0)$, we obtain a homogeneous polynomial $h$ of some positive degree $d$ vanishing on $Z\cup(Y\setminus U_0)$ but at no point of $A$. Then $D(h)$ is affine, $A\subset D(h)\subset Y\setminus Z$, $E|_{D(h)}=D(h)\times\C^{r}$, $I(s)|_{D(h)}:=I(s)\cap E|_{D(h)}$ is a closed subset of $D(h)\times\C^{r}$ disjoint from $D(h)\times\{0\}$, and $\dim\overline{I(s)}_y\le r-2$ for $y\in D(h)$.

\emph{Step 2.} For $a\in A$, since $\dim\overline{I(s)}_a\le r-2$, the set
\[
B_a:=\bigl\{[v]\in\PP^{r-1}\colon\C v\cap I(s)_a\ne\emptyset\ \text{or}\ [v:0]\in\overline{I(s)}_a\bigr\}
\]
is a proper Zariski closed subset of $\PP^{r-1}$. Indeed, $B_a$ is the image of $\overline{I(s)}_a$ under the projection from $[0:1]$, which is defined on $\overline{I(s)}_a$ since $0\notin I(s)_a$. Its image has dimension at most $r-2$, proving properness. Hence there is a basis $(v_1,\dots,v_r)$ of $\C^{r}$ with $[v_i]\in\PP^{r-1}\setminus\bigcup_{a\in A}B_a$. For each $i$ let $\pi_i\colon\C^{r}\to\C^{r}/\C v_i\cong\C^{r-1}$ be the projection. Then $[v_i:0]\notin\overline{I(s)}_y$ for $y$ in a Zariski open neighborhood $U_i\subset D(h)$ of $A$. The projection from $[v_i:0]$ is therefore a proper morphism on $\overline{I(s)}|_{U_i}$. The inverse image of the affine chart $U_i\times\C^{r-1}$ of $U_i\times\PP^{r-1}$ under this projection is exactly $I(s)|_{U_i}$: every point at infinity other than the excluded center maps to infinity. Consequently $(\id\times\pi_i)(I(s)|_{U_i})$ is closed in that affine chart. The choice of $v_i$ ensures that it misses $(a,0)$ for every $a\in A$. Since restricting a closure to an open subset is the closure of the restriction, $(a,0)\notin Z_i$, where $Z_i:=\overline{(\id\times\pi_i)(I(s)|_{D(h)})}$ is the closure in $D(h)\times\C^{r-1}$. Since $D(h)$ is affine there is $g_i\in\cO^{\mathrm{alg}}(D(h)\times\C^{r-1})$ vanishing on $Z_i$ with $g_i(a,0)\ne0$ for $a\in A$. The maps
\[
\varphi_i^{(t)}(y,\zeta):=\bigl(y,\ \zeta+t\,g_i(y,\pi_i(\zeta))\,v_i\bigr)\qquad(t\in\C)
\]
are the flows of locally nilpotent vertical derivations. They are automorphisms of $D(h)\times\C^{r}$ over $D(h)$ fixing $I(s)|_{D(h)}$ pointwise, hence preserving its complement. Put $\varphi(y,t):=\varphi_r^{(t_r)}\circ\dots\circ\varphi_1^{(t_1)}(y,0)$. Then $\varphi\colon D(h)\times\C^{r}\to(D(h)\times\C^{r})\setminus I(s)$ is a morphism with $\varphi(y,0)=(y,0)$.

\emph{Step 3.} $s_h:=s\circ\varphi\colon D(h)\times\C^{r}\to Y$ is a morphism (as $s$ is regular off $I(s)$), and $s_h(y,0)=y$.

\emph{Step 4.} \Cref{lem:localization} applied to $s_h$ gives the algebraic spray $s_A$ on $\cO_Y(-m)^{\oplus r}$ with $m=dk$. Since $g_i(a,0)\ne0$ for $a\in A$ and $v_1,\dots,v_r$ are linearly independent, $s_A$ is dominating at every point of $A$. Conversely, every algebraic spray which is dominating at some point has exceptional set different from $Y$.
\end{proof}

\begin{remark}\label[remark]{rem:alg-spray}
The resolution of the indeterminacy of $s$ is not needed: instead of blowing up $I(s)$, one moves inside the complement of the codimension-$2$ set $I(s)$ by shears. A statement similar to Step 2 of the proof of \cref{thm:alg-spray} appears in the concurrent work \cite[Lemma 2.2]{DKZ26}. \Cref{lem:localization} is a version of Gromov's Localization Lemma \cite[3.5.B]{Gr89}, see also \cite[Proposition 2.1]{KZ24b}, whose proof is given in \cite[Appendix C]{KZ24b}, and the extension lemma in \cite{KZ24a}. The spray of \cref{thm:alg-spray} dominates only on a neighborhood of $A$, because of the twist by $h$ and the shears. This is why for approximation we use \cref{prop:normalized}, which keeps the exceptional set $Z(s)$ at the price of rationality away from the zero section. If the set $Z$ of \cref{thm:alg-spray} is empty, then every point of $Y$ has a Zariski open neighborhood on which some algebraic spray on $Y$ is dominating. Hence $Y$ is algebraically subelliptic, and therefore algebraically elliptic \cite[Theorem 1.1]{KZ24a} (see also \cite[Theorem 1.9]{Za25}). It remains open whether $Z(s)=\emptyset$ alone implies algebraic ellipticity.
\end{remark}

\subsection{Birational maps}
Throughout this subsection, $Y$ and $\widetilde Y$ are smooth varieties.

\begin{definition}\label[definition]{def:Zpsi}
Let $f\colon Y\merto\widetilde Y$ be birational. Let $U_f\subset Y$ be the union of all Zariski open sets on which $f$ is regular and an open embedding. Put $\widetilde U_f:=f(U_f)$ and $\widetilde Z_1:=\widetilde Y\setminus\widetilde U_f$. For a Zariski closed set $Z\subsetneq Y$ put
\[
Z_f:=\widetilde Z_1\cup\overline{f(Z\cap U_f)}.
\]
\end{definition}

If $f$ is an open embedding on $U_1$ and on $U_2$ and $f(y_1)=f(y_2)$ with $y_i\in U_i$, then the two inverses are regular representatives of $f^{-1}$ and agree near $f(y_1)$, so $y_1=y_2$. Hence $f|_{U_f}\colon U_f\to\widetilde U_f$ is an isomorphism. Moreover $Z_f$ is a proper Zariski closed subset of $\widetilde Y$, and
\begin{equation}\label{eq:Zpsi}
f^{-1}(\widetilde Y\setminus Z_f)\subset U_f\setminus Z,\qquad \emptyset_f=\widetilde Z_1.
\end{equation}

\begin{theorem}\label[theorem]{thm:transport}
Let $f\colon Y\merto\widetilde Y$ be birational and $s\colon Y\times\C^{r}\merto Y$ a rational spray on a trivial bundle with $Z(s)\ne Y$. Then $\tilde s:=f\circ s\circ(f^{-1}\times\id)\colon\widetilde Y\times\C^{r}\merto\widetilde Y$ is a rational spray with $Z(\tilde s)\subset Z(s)_f$. For a rational spray $s\colon E\merto Y$ with $Z(s)\ne Y$ on an arbitrary bundle and a nonempty Zariski open set $U$ over which $E$ is trivial, the same holds for $s_U$ \textup{(}\cref{rem:rational-basic}\textup{(2)}\textup{)}, with $Z(s_U)_f\subset(Z(s)\cup(Y\setminus U))_f$.
\end{theorem}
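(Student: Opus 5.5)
We verify the two defining properties of a rational spray for $\tilde s$ and then estimate its exceptional set. Since $f$ is birational, $\tilde s=f\circ s\circ(f^{-1}\times\id)$ is a well-defined rational map as long as the image of the middle composite is not contained in the indeterminacy set of $f$. This holds because $s\circ(f^{-1}\times\id)$ is dominant: by the zero-section identity it restricts on $\widetilde Y\times\{0\}$ to $f^{-1}$, and $f^{-1}$ is dominant. For the argument we work over the open set $W:=\widetilde Y\setminus Z(s)_f$, which is nonempty because $Z(s)_f$ is a proper Zariski closed set (\cref{def:Zpsi}).

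Fix $\tilde y\in W$. By \eqref{eq:Zpsi}, $y:=f^{-1}(\tilde y)$ lies in $U_f\setminus Z(s)$. Hence $f^{-1}$ is regular near $\tilde y$ and is an isomorphism $\widetilde U_f\to U_f$ there. Since $y\notin Z(s)$, $s$ is regular near $(y,0)$ with $s(y,0)=y\in U_f$. By continuity, $s$ maps a neighborhood of $(y,0)$ into $U_f$, on which $f$ is regular. Thus $\tilde s$ is regular near $(\tilde y,0)$ and $\tilde s(\tilde y,0)=f(s(y,0))=\tilde y$. This gives $(\widetilde Y\times\{0\})\not\subset I(\tilde s)$ and $\tilde s\circ 0=\id$ on the dense set $W$, hence as rational maps.

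For domination at $\tilde y$, the chain rule gives
\[
d\tilde s_{(\tilde y,0)}|_{\C^r}=df_y\circ ds_{(y,0)}|_{\C^r},
\]
because the factor $f^{-1}\times\id$ acts as the identity on the fiber direction. Here $df_y$ is an isomorphism since $f|_{U_f}$ is an isomorphism, and $ds_{(y,0)}|_{\C^r}$ is surjective since $y\notin Z(s)$. So the composite is surjective and $\tilde y\notin Z(\tilde s)$, which proves $Z(\tilde s)\subset Z(s)_f$.

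For the general bundle, \cref{rem:rational-basic}(2) shows that $s_U$ is a rational spray on the trivial bundle $Y\times\C^r$ with $Z(s_U)\subset Z(s)\cup(Y\setminus U)$. This is a proper Zariski closed set, since $U$ is nonempty and $Z(s)\neq Y$. Applying the trivial case to $s_U$ gives $Z(\widetilde{s_U})\subset Z(s_U)_f$. The operation $Z\mapsto Z_f$ is monotone in $Z$ directly from its definition, so $Z(s_U)_f\subset(Z(s)\cup(Y\setminus U))_f$. The one delicate point in the whole argument is the first step, namely checking that the composite rational map is well defined and that the identity $\tilde s\circ0=\id$ holds globally. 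Both reduce to the dense open set $W$, where every map involved is regular.
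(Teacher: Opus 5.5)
Your proposal is correct and follows the paper's proof essentially step for step: you fix $\tilde y\notin Z(s)_f$, use \eqref{eq:Zpsi} to get $y=f^{-1}(\tilde y)\in U_f\setminus Z(s)$, compose local regular representatives, apply the chain rule in the fiber direction, and reduce the general bundle to $s_U$ via \cref{rem:rational-basic}(2), using that $Z(s)\cup(Y\setminus U)\ne Y$ by irreducibility of $Y$. The one small difference is how you get dominance of $s\circ(f^{-1}\times\id)$: you take it from its restriction to the zero section, while the paper uses dominance of $s$ and birationality of $f^{-1}\times\id$; both are valid, and your explicit remark that $Z\mapsto Z_f$ is monotone fills in a step the paper leaves implicit.
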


\begin{proof}
Since $f^{-1}\times\id$ is birational and $s$ is dominant \textup{(}as $s\circ0_E=\id_Y$\textup{)}, the composition $\tilde s$ is a well-defined rational map. Let $\tilde y\in\widetilde Y\setminus Z(s)_f$, which is a nonempty Zariski open set. By \eqref{eq:Zpsi}, $y:=f^{-1}(\tilde y)\in U_f\setminus Z(s)$, so $s$ is regular at $(y,0)$ with $s(y,0)=y\in U_f$, and $\zeta\mapsto s(y,\zeta)$ is a submersion at $0$. Since $f^{-1}$ is regular near $\tilde y$ and $f$ is regular and a local isomorphism near $y$, $\tilde s$ is regular at $(\tilde y,0)$, $\tilde s(\tilde y,0)=f(y)=\tilde y$, and $\zeta\mapsto\tilde s(\tilde y,\zeta)=f(s(y,\zeta))$ is a submersion at $0$. Hence $\tilde s$ is a rational spray and $\tilde y\notin Z(\tilde s)$. The last statement follows from $Z(s_U)\subset Z(s)\cup(Y\setminus U)$, which is different from $Y$ since $Y$ is irreducible.
\end{proof}

\begin{proof}[Proof of \cref{thm:ME}]
Trivialize a rational spray over a nonempty Zariski open set (\cref{rem:rational-basic}(2)) and apply \cref{thm:transport} to obtain birational invariance. Projective space carries the spray of \cref{ex:homogeneous}(a), so every rational variety is rationally elliptic. Conversely, a dominating fiber of a rational spray gives a dominant rational map $\C^{r}\merto Y$, proving unirationality. The projective equivalence is \cref{thm:alg-spray}.
\end{proof}

It remains open whether every smooth unirational projective variety is rationally elliptic. This is a weak form of Gromov's question whether such varieties are algebraically elliptic, see \cite[Question 4.1]{Za25}.

\section*{Acknowledgments}
The first author was supported by JSPS KAKENHI Grant Number JP24K16919.

\section*{AI disclosure}
The authors proved the nonparametric case of \cref{thm:main1} without the use of AI. Its parametric version, and the birational invariance results of \cref{sec:birational} based on rational sprays, were obtained with the assistance of Claude Opus 5.5 (Anthropic). Generative AI tools were also used in the preparation and revision of this manuscript.
The authors take full responsibility for its content.

\end{document}